\documentclass{amsart}
\usepackage{geometry}                
\usepackage{graphicx}
\usepackage{amssymb}
\usepackage{epstopdf,amsmath}
\usepackage{enumitem}

\newtheorem{prop}{Proposition}[section]

\newtheorem{lemma}[prop]{Lemma}
\newtheorem*{lemma*}{Lemma}

\newtheorem{theorem}[prop]{Theorem}
\newtheorem{conj}[prop]{Conjecture}

\newtheorem*{prop*}{Proposition}
\newtheorem*{theorem*}{Theorem}

\newcommand{\FF}{\mathbb{F}}
\newcommand{\CC}{\mathbb{C}}

\newcommand{\eps}{\epsilon}

\newcommand{\QQ}{\mathbb{Q}}
\newcommand{\ZZ}{\mathbb{Z}}
\newcommand{\RR}{\mathbb{R}}

\newcommand{\NN}{\mathbb{N}}

\newcommand{\house}{\textrm{House}}
\newcommand{\heur}{\approx}

\newcommand{\Average}{\textrm{Average}}
\newcommand{\Volume}{\textrm{Volume}}
\newcommand{\Cov}{\textrm{Cov}}
\newcommand{\Rect}{\textrm{Rect}}
\newcommand{\Ann}{\textrm{Ann}}
\newcommand{\Cover}{\textrm{Cover}}
\newcommand{\PGL}{\textrm{PGL}}
\newcommand{\trdeg}{\textrm{Transc Deg}}

\title{Perspectives on the unit distance problem}

\author{Larry Guth}

\begin{document}

\maketitle

\begin{abstract}
This is a survey on an old open problem in combinatorics called the unit distance problem, and the field of mathematics around it, called incidence geometry.   What do we know about the problem?  Why is it difficult?  How does it connect with other parts of math?
\end{abstract}

\section{Introduction}

The unit distance problem is an old problem in combinatorial geometry,  raised by Paul Erdos in the 1940s.  The problem is simple to state but we remain far from understanding it.

Recently,  a paper by Open AI \cite{OAI} gave new examples for this problem for the first time since Erdos's work in the 1940s,  going against the expectations of many people.   Following this example,  mathematicians found related counterexamples to several other longstanding problems in combinatorial geometry and number theory,  including \cite{BSSZ} and \cite{Po1}.  

This is a survey paper about the unit distance problem and the field around it.   I will describe why I think the problem is interesting.  We will discuss why the problem is difficult.  We will discuss how problems of this type relate to other parts of math.  We will describe the new examples,  and we will discuss how they fit into the field.

Here is the unit distance problem.   Suppose that $P$ is a finite set of points in the plane $\RR^2$.   We let $U(P)$ be the number of pairs of $P$ at unit distance:

$$ U(P) := \# \{ p_1, p_2 \in P: |p_1 - p_2| = 1 \}. $$

\noindent Let $U_{max}(n) = \max_{|P| = n} U(P)$.   The problem is to understand the asymptotics of $U_{max}(n)$.

In the 1940s,  Erdos proved that $U_{max}(n) \le C n^{3/2}$,  and he conjectured that $U_{max}(n) \le C_\epsilon n^{1 + \epsilon}$ for any $\epsilon > 0$.   In the 1980s, using an interesting argument based on topology,  Spencer,  Szemeredi,  and Trotter \cite{SST} proved that $U_{max}(n) \le C n^{4/3}$.   This bound has not been improved.   The recent Open AI paper disproved Erdos's conjecture.  They proved

\begin{theorem} \label{counterunit} (\cite{OAI}) There is some $\epsilon_0 > 0$ and $c > 0$ so that $U_{max}(n) \ge c n^{1 + \epsilon_0}$ for a sequence of $n$ tending to infinity.  
\end{theorem}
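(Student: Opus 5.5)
The plan is to replace the Gaussian integers in Erd\H{o}s's lattice construction by the ring of integers of a CM number field of very large degree. The unit distances will come from elements of relative norm $\alpha$ over the totally real subfield, for a suitable totally positive $\alpha$ built from many small split primes. Let $L$ be a CM field of degree $2d$ with totally real subfield $K$ and complex conjugation $x\mapsto \bar x$. Fix a complex embedding $\sigma_1$. Choose $\alpha\in\mathcal{O}_K$ totally positive and put
$$S_\alpha=\{u\in \mathcal{O}_L : u\bar u=\alpha\}.$$
Every $u\in S_\alpha$ satisfies $|\sigma_j(u)|=\sigma_j(\alpha)^{1/2}$ for all $j$. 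Let $P_0=\{x\in\mathcal{O}_L : |\sigma_j(x)|\le R \text{ for all } j\}$ and $P=\sigma_1(P_0)/\sigma_1(\alpha)^{1/2}\subset \CC=\RR^2$. The map $\sigma_1$ is injective, so $|P|=|P_0|$. Whenever $x,x+u\in P_0$ with $u\in S_\alpha$, the two image points are at unit distance. Hence $U(P)\gtrsim |P_0'|\cdot|S_\alpha|$, where $P_0'$ is the box shrunk by $\max_j\sigma_j(\alpha)^{1/2}$; we take $R$ to be, say, twice that quantity.

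The counting goes as follows. By Minkowski geometry of numbers, $|P_0|\approx (cR)^{2d}/|\mathrm{disc}(L)|^{1/2}$. Suppose $L$ has bounded root discriminant, $|\mathrm{disc}(L)|^{1/(2d)}\le \Delta$. Then $n=|P|\approx (cR/\Delta^{1/2})^{2d}$. Next take $\alpha=\mathfrak p_1\cdots\mathfrak p_k$, where the $\mathfrak p_i$ are primes of $K$ that split in $L$ as $\mathfrak P_i\bar{\mathfrak P}_i$ with $\mathfrak P_i$ principal in $L$. Choosing independently $\mathfrak P_i$ or $\bar{\mathfrak P}_i$ for each $i$ gives $2^k$ distinct elements of $S_\alpha$, up to units. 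Units can be normalized: we need $u\bar u$ to be exactly $\alpha$, and after adjusting by units of $K$ this is possible up to a bounded index, which I would handle by passing to $\alpha$ times a fixed unit or restricting to a subfamily.

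If all the $\mathfrak p_i$ have norm at most a fixed $q$ and $k\approx \delta d$, then we can make $\alpha$ roughly balanced across the embeddings. To do this, choose the generator via units so that all $\sigma_j(\alpha)\approx q^{k/d}$; this is a lattice-point argument in the log-unit lattice, which costs at most a bounded factor per coordinate thanks to the bounded regulator per degree. Then $R\approx q^{k/(2d)}$, so $n\approx C^{d}$ with $C$ depending on $q,\delta,\Delta$, while $|S_\alpha|\ge 2^{k}=2^{\delta d}$. This gives $|S_\alpha|\ge n^{\epsilon_0}$ with $\epsilon_0=\delta\log 2/\log C>0$, and therefore $U(P)\ge c\,n^{1+\epsilon_0}$ along the sequence of degrees $d\to\infty$.

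The main obstacle is the arithmetic input. We need an infinite family of CM fields $L$ with bounded root discriminant in which a positive proportion $\delta d$ of the primes of $K$ of norm $\le q$ split in $L/K$ and have principal prime factors. My first attempt would be a Golod--Shafarevich infinite class field tower in which a fixed finite set of small primes splits completely, in the style of Ihara and Tsfasman--Vl\u{a}du\c{t}. Start from a CM base field $L_0$ in which some small prime of $K_0$ splits in $L_0$. Then take the tower of unramified extensions in which that prime splits completely. Each layer is again CM, the root discriminant stays that of $L_0$, and the number of primes above the chosen one grows linearly in the degree. Finally, passing to the Hilbert class field at each stage, still inside the tower, makes those prime ideals principal. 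The delicate points are verifying the Golod--Shafarevich inequality with the splitting condition imposed and keeping the CM structure; I expect this is where most of the work lies.
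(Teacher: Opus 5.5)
Your construction uses the \emph{$m$ factors favourably} mechanism rather than the paper's averaging argument. As written it has a genuine gap: the factors that are exponential in $d$ are not accounted for.

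Your only gain is $|S_\alpha|\ge 2^k$. With a single prime $\mathfrak p$ of $K_0$ splitting completely up the tower, $k=[K:K_0]\le d$, so the gain per degree is at most $2$. The bound you actually write is $U(P)\gtrsim |P_0'|\,|S_\alpha|$ with $R=2\max_j\sigma_j(\alpha)^{1/2}$. Here $P_0'$ is the box of half the radius in each of $2d$ real dimensions, so even heuristically $|P_0'|\,|S_\alpha|/n\approx 2^{k}4^{-d}\le 2^{-d}$. Your formula $\epsilon_0=\delta\log 2/\log C$ silently treats $|P_0'|$ as comparable to $|P_0|$.

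Taking $R$ a large multiple of $\max_j\sigma_j(\alpha)^{1/2}$ repairs the heuristic but not the proof. $R$ must stay bounded as $d\to\infty$, and no available estimate counts points of $\psi(O_L)$ in such boxes to within $(1+o(1))^d$. The rigorous tools (Blichfeldt--van der Corput, covering by small cubes, or covering $B$ by translates of $\tfrac12 B'$) each lose a fixed factor $C^d$ with $C\ge 4$, plus root-discriminant factors. Pigeonholing over the class group, or over totally positive units modulo norms from $L$, costs further factors $h_L$ or $2^{d-1}$.

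A per-degree gain bounded by $2$ cannot pay for fixed per-degree losses. In this mechanism you would need $k/d$ to be a large tunable constant. That means many small primes splitting completely, with principal factors, in a CM family of bounded root discriminant, which is a far stronger arithmetic input than the one you sketch.

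The sketched input also has errors.
\begin{itemize}
\item Unramified towers over a CM field are not CM in general. For the Hilbert class field $H$ of an imaginary quadratic field, $\mathrm{Gal}(H/\QQ)$ is generalized dihedral, and $H$ is CM only if the class group is $2$-torsion. You would instead take a totally real tower $K_n$ and set $L_n=K_nL_0$.
\item A prime splits completely in the Hilbert class field if and only if it is principal. So the Hilbert class field of a layer lies in your $\mathfrak p$-split tower only if the primes above $\mathfrak p$ are already principal. The principal ideal theorem makes $\mathfrak P O_H$ principal, not the primes of $H$ above it.
\item A bounded regulator per degree does not control the covering radius of the log-unit lattice, so balancing $\alpha$ by units is unjustified.
\end{itemize}
On the positive side, unit normalization is unnecessary once $\mathfrak P_i=(\pi_i)$: setting $\alpha:=\prod_i\pi_i\bar\pi_i$ gives $u\bar u=\alpha$ exactly for all $2^k$ products $u=\prod_i\pi_i^{\epsilon_i}\bar\pi_i^{1-\epsilon_i}$.

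The paper avoids all of this. It bounds the average of $N_{S_m}(O_k,H)$ over $m$ from below by fitting $\sim(\log H)^{r_2}$ symmetric convex sets $K(a)$ into $\tilde R_H$ and applying Blichfeldt--van der Corput together with an overlap count. The per-degree gain is therefore $\log H$, with $H$ a free constant taken large enough to beat every $C^d$ loss. The only arithmetic input is a Golod--Shafarevich--Martinet family with $\Delta_k<C_{GS}^d$ and $r_1=r_2$, and $m$ is then chosen by pigeonhole.
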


\noindent A followup paper by Sawin \cite{Sa} refined the argument to give an explicit $\epsilon_0 = 0.014...$

 The unit distance problem was among the first problems in a field of combinatorial geometry called incidence geometry.  Let us now introduce incidence geometry more broadly.

Imagine a hypothetical set $P$ of $n$ points with many unit distances.   Imagine drawing a unit circle around each point of $P$.   Those unit circles will have to hit the points of $P$ many times.   Suppose that $P$ is a set of points and $\Gamma$ is set of curves (or more generally a set of surfaces or subvarieties or ...).   We say a point $p \in P$ is incident to a curve $\gamma \in \Gamma$ if $p \in \gamma$,  and we write $I(P,\Gamma)$ for the number of incidences:

$$ I(P, \Gamma) = \# \{ p \in P,  \gamma \in \Gamma: p \in \gamma \}. $$

\noindent The unit distance problem boils down to estimating $I(P, \Gamma)$ where $\Gamma$ is a set of unit circles.

In general,  let $C$ denote any class of subvarieties of $\RR^n$.  For instance,  $C$ could be the class of straight lines in the plane,  unit circles in the plane,  ellipses in the plane,  2-spheres in $\RR^3$,  ...   We define

$$ I_{max,  C}(m,n) = \max_{ |P| = m,  |\Gamma| = n }  I(P, \Gamma) . $$

\noindent (In the $\max$,  we take $\Gamma$ to be a set of $n$ curves/surfaces from $C$.)   We first introduced the problem using curves/surfaces in $\RR^n$,  but we can also replace $\RR$ with other fields,  including $\CC$ or finite fields $\FF_q$.   Considering the many possible choices of $C$ and also the choice of field,  we get dozens of different problems.   Almost all of these problems are wide open. 

There is one important example where $I_{max,C}(m,n)$ is well understood: the case of straight lines in the plane $\RR^2$.

\begin{theorem} \label{thmst} (Szemeredi-Trotter,  \cite{ST},  1980s) If $C$ is the class of straight lines in $\RR^2$,  then $I_{max,C}(m,n) \sim m^{2/3} n^{2/3} + m + n$.
\end{theorem}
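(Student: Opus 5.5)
We prove the two directions separately: an upper bound $I(P,L) \le C(m^{2/3}n^{2/3} + m + n)$ valid for all configurations, and a matching lower bound coming from an explicit grid construction.

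\textbf{Upper bound via the crossing number inequality.} Let $P$ be $m$ points and $L$ be $n$ lines, and build a graph $G$ drawn in the plane as follows.
\begin{enumerate}
\item The vertices of $G$ are the points of $P$.
\item For each line $\ell \in L$ containing $k_\ell \ge 1$ points of $P$, join consecutive points along $\ell$ by edges. This gives $k_\ell - 1$ edges per line.
\end{enumerate}
Then $G$ has $E = I - n$ or more edges. Since two distinct lines meet at most once, there are no multiple edges. Two edges can cross only at the intersection of two distinct lines, so the crossing number satisfies $\mathrm{cr}(G) \le \binom{n}{2} < n^2$.

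We then use the crossing lemma: if $E \ge 4V$ then $\mathrm{cr}(G) \ge E^3/(64 V^2)$. I would reprove it briefly. Euler's formula gives $\mathrm{cr} \ge E - 3V$ for any drawing, by deleting one edge per crossing until the graph is planar. Now keep each vertex independently with probability $p = 4V/E$. In expectation this yields
\[ p^4\,\mathrm{cr} \ge p^2 E - 3pV, \]
and optimizing in $p$ gives the stated bound.

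Combining the two inequalities, either $I - n < 4m$, or $(I-n)^3 \le 64 m^2 n^2$. In both cases
\[ I \le C(m^{2/3}n^{2/3} + m + n). \]

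\textbf{Lower bound construction.} The terms $m$ and $n$ are easy to realize. If $n \gtrsim m^2$, put all points on one line and pass $n$ lines through a single point, which gives about $n$ incidences; the case of many points is symmetric. The main term $m^{2/3}n^{2/3}$ is where the real work lies.

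For that term I would use Elekes' grid. Take
\[ P = [k] \times [2k^2], \qquad L = \{\, y = ax + b : a \in [k],\ b \in [k^2] \,\}. \]
Each line meets $P$ at $x = 1,\dots,k$, since then $y \le k^2 + k^2 = 2k^2$. This gives
\[ m = 2k^3, \qquad n = k^3, \qquad I = k^4 \sim m^{2/3}n^{2/3}. \]
For unbalanced $m,n$ in the range $\sqrt{m} \le n \le m^2$, use the grid $[s]\times[2st]$ with lines of slopes $a \in [t]$ and intercepts $b \in [st]$. Choosing $s,t$ appropriately produces $m^{2/3}n^{2/3}$ incidences up to constants, with rounding handled routinely. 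Outside that range, one of the terms $m$ or $n$ dominates, and the easy constructions above suffice.

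\textbf{Main obstacle.} The main obstacle is the upper bound, specifically the crossing lemma. The key insight is the amplification by random sampling, which converts the weak Euler bound $\mathrm{cr} \ge E - 3V$ into a cubic bound. Everything else is bookkeeping, the one point needing care being that lines with $k_\ell = 0$ contribute nothing, so $E \ge I - n$ still holds.
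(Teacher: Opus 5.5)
Your proof is correct, but it takes a different route from the paper.

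\textbf{What the paper does.} The paper never writes out a full proof of Theorem \ref{thmst}. Its upper-bound argument is the cutting (partitioning) method of Section \ref{subsecpart}, and it is done in detail only for uniformly spaced points. The steps there are: a $D\times D$ grid of cells, the double-counting bound $I \lesssim |\Gamma_i| + |\Gamma_i|^{1/2}|P_i|$ in each cell, the topological fact that a line enters $O(D)$ cells, and then optimizing $D$. The general case is only sketched, through a partition whose walls are $D$ randomly chosen lines. The lower bound is only gestured at, via a square integer grid and lines of small-height rational slopes.

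\textbf{What you do differently.} You use Sz\'ekely's crossing-number argument. Topology enters through Euler's formula rather than through counting the cells a line meets. Random sampling of vertices plays the role that the choice of scale $D$ plays in the cutting argument: each amplifies a weak bound (Euler's $\mathrm{cr} \ge E - 3V$, respectively the double-counting bound) into the sharp one.

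\textbf{What each route buys.} Yours gives a complete proof for arbitrary point sets in a few lines, with no need to build a balanced cell decomposition. The paper's gives a divide-and-conquer scheme that generalizes directly to higher dimensions, and it is the form Wolff imported into harmonic analysis. The crossing lemma, by contrast, is intrinsically planar, although it also adapts to unit circles.

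\textbf{Lower bound.} Elekes' rectangular grid makes the count exact ($s$ points on each of $st^2$ lines) and handles the unbalanced range $\sqrt m \le n \le m^2$ cleanly. The square-grid construction the paper mentions needs a little counting of primitive directions.

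\textbf{One detail to state explicitly.} In the sampling step, the expected number of surviving crossings is exactly $p^4$ times the original number. This holds because in your straight-line drawing every crossing involves four distinct vertices: two segments sharing an endpoint meet only there. Also, the inequality $\mathrm{cr} \ge E - 3V$ should be applied to the induced sub-drawing.
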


The examples with  $|I(P,\Gamma)| \sim m^{2/3} n^{2/3}$ are based on simple number theory.   For instance,  we can arrange the points $P$ in an $m \times m$ integer grid and choose lines with rational slopes (of small height) which each intersect the grid at  many points.   Most of the interesting examples in the field are based on number theory.   We will explore the role of number theory more as we go along.   The recent new examples for the unit distance problem are also based on number theory,  building on previous number theoretic examples but with a significant new twist involving high degree number fields.

The upper bounds in the Szemeredi-Trotter theorem are related to topology.   There are several interesting proofs,  but they all use topology in some way.   It is rather remarkable that the upper bounds,  proven using topology,  match the lower bounds,  proven using number theory.    (On the other hand,  for almost every other choice of $C$,  the asymptotic behavior of $I_{max,C}$ is unknown.)   The Spencer-Szemeredi-Trotter upper bound $U_{max}(n) \le C n^{4/3}$ is closely related to the Szemeredi-Trotter theorem.

Before we go on,  I would like to make a little personal reflection about this style of question.   In the 1930s, 40s, and 50s,  combinatorics was growing into its own field.   Erdos and others working in the field posed many questions.   I think they tried to pose questions with very simple words and leading in new directions.  Some of these questions led to new subfields and started mathematical
explorations still unfolding today. I think that asking questions in simple words like this is quite remarkable. If we compare with other fields of science, imagine going to the house of an engineer
and asking what they are working on, and imagine that they take out their child’s erector set and put a few pieces together and ask a question about it, and imagine this question is actually
a serious direction of research in engineering. I vividly remember learning about the Szemeredi-Trotter theorem and the unit distance problem from my friend Matt Kahle when I was a postdoc at Stanford. I remember how refreshing it felt to see interesting recent work and open questions about lines or circles in the plane. I have heard that Erdos tried to ask questions in simple words
partly to help engage young people and make the community more vibrant.

When the unit distance problem first appeared,  I don't think anyone had a good guess how difficult it would be.   I think it seemed plausible that it might have a one page solution.   But the problem has turned out to be really difficult,  and we will spend some time reflecting on why.
When the unit distance problem first appeared,  it was also not very clear how much it connected to other parts of math.   As the field of incidence geometry grew,  people found a number of significant connections to other areas of math.   We will review some of these in the next section.    

Here is an outline of the rest of the paper.  In Section \ref{secconnections},  we will discuss some of the connections between incidence geometry and other parts of math,  which gives one point of view about why incidence geometry is interesting.   In Section \ref{secexamples},  we discuss examples for problems in incidence geometry,  exploring the role of number theory and discussing old and then new examples.   There are a number of nice expositions of the new examples, including \cite{ABG+}, \cite{Bl},  \cite{Po}, and \cite{Ta}.  I will give a somewhat different description, explaining first why I did not expect such examples and then how the examples make sense to me in hindsight. 
The complexity of these examples gives one perspective on why the problems are difficult.  In Section \ref{secupperbounds},  we discuss approaches to proving bounds in incidence geometry,  and explore why current bounds are hard to improve.   Besides looking for examples and proving upper bounds, mathematicians in the field also try to classify examples and to understand what structural features examples must possess.  I think this is an important and interesting direction in the field and we discuss it in Section \ref{secgoals}.  

\vskip10pt

{\bf Acknowledgements.}  Thanks to Bjorn Poonen, James Maynard, Melanie Matchett-Wood, Peter Sarnak, and Martin Widmer for helpful comments on drafts of this survey.






\section{Connections to many areas} \label{secconnections}

There are a number of interesting connections between incidence geometry and other areas of math.   I have picked out two connections which I find striking.

In the 1990s, Tom Wolff developed some connections between incidence geometry and problems in harmonic analysis.   For example,  he applied incidence geometry ideas to prove a deep estimate about a problem in PDE called the local smoothing problem.   Consider a solution to the wave equation $u(x,t)$ with initial data $u(x,0) = f(x)$ and $\partial_t u(x,0) = g(x)$.   The problem that Wolff considered was: given bounds on $L^p$-based Sobolev norms of $f$ and $g$ on $\RR^d$,  what can we conclude about $L^p$-based Sobolev norms of the solution $u$ on $\RR^d \times [0,1]$?  The case $p=2$ is classical,  but larger values of $p$ are much more difficult.   Wolff used incidence geometry to prove sharp estimates for certain $p > 2$, giving the first such sharp estimates.

Let us try to give a hint of how incidence geometry comes into play in this problem of PDE and harmonic analysis.   One basic solution to the wave equation models the pressure waves that are generated when you clap your hands.   There is a sphere of high pressure that moves outward from your hands at the speed of sound.  If you imagine this high pressure region in space-time,  it forms a cone with your hands at the center.   Modeling a single clap can be done by a classical explicit solution to the wave equation.   The situations becomes more complex when there are many claps.   Each clap generates a high-pressure cone in spacetime.   The features of the final solution crucially depend on how those cones intersect each other.   Wolff bounded those intersections using a method from incidence geometry called the cutting method,  developed by Clarkson, Edelsbrunner, Guibas, Sharir, and Welzl,  \cite{CEGSW}.  (We will review this work in Section \ref{subsecpart}.)   Combining the incidence geometry with other ideas from Fourier analysis he was able to prove the first sharp estimates about the local smoothing problem.

The problems that appear in harmonic analysis involve thin neighborhoods of cones or lines or circles instead of actual cones or lines or circles.   So the harmonic analysis problems are cousins of the problems in combinatorial geometry.   The harmonic analysis versions of these problems are related to the original combinatorial problems but they involve new issues and serious new difficulties.   Kaufman,  Falconer, and Furstenberg introduced these types of problems in the 1960s and 70s in the context of geometric measure theory and ergodic theory, cf. \cite{Ka},  \cite{Fa},  \cite{Wol}.   There was recently an important milestone in the work on thin-neighborhoods version of incidence geometry: in \cite{OS} and \cite{RW},  Orponen-Shmerkin and Ren-Wang proved the Furstenberg set conjecture,  the natural analogue of Theorem \ref{thmst} with thin rectangles in place of lines.

Recently,  these ideas have been applied in homogenenous dynamics by Lindenstrauss,  Mohammadi, Wang, and Yang starting in \cite{LM}.  They proved strong quantitative estimates for some cases of Ratner's fundamental theorem on the behavior of unipotent orbits in locally symmetric spaces.   

Let us try to give a hint of how incidence geometry comes into play in this problem in dynamics.   Suppose we have a compact manifold $X$ and a diffeomorphism $\phi: X \rightarrow X$.   We have a curve $C \subset X$,  and we want to understand $\phi^n(C) \subset X$ for a large power $n$.    Roughly speaking $\phi^n(C)$ will be a very long curve that wraps around $X$ many times,   and the main question is how $\phi^n(C)$ will be distributed in $X$.    If we intersect $\phi^n (C)$ with a ball $B \subset X$,  we will see many roughly parallel arcs,  and we would like to understand how these arcs are spaced.   Are they roughly evenly spaced or are they clumped together?  We can study this problem inductively -- given some information about $\phi^n(C)$,  we try to describe $\phi^{n+1}(C) = \phi ( \phi^n(C))$.   If $\phi^n(C)$ is clumped together, can we prove that $\phi^{n+1}(C)$ is less clumped together than $\phi^n(C)$?  

In the work of \cite{LM},  $\phi$ is a hyperbolic diffeomorphism,  which means there are some submanifolds,  called stable submanifolds,  which are contracted by $\phi$,  and other submanifolds,  called unstable submanifolds,  which are expanded by $\phi$.   If two points $x_1, x_2 \in \phi^n(C)$ are close together and if they lie on the same stable manifold,  then $\phi(x_1), \phi(x_2)$ will be even closer together than $x_1, x_2$.   If $x_1, x_2 \in \phi^n(C)$ are close together but don't lie on or near the same stable submanifold,  then $\phi(x_1), \phi(x_2)$ will be farther apart than $x_1, x_2$.    So the action of $\phi$ on $\phi^n(C)$ will spread out clumps unless the stable submanifolds intersect $\phi^n(C)$ a lot.  To control $\phi^{n+1}(C)$,  we have to estimate ``incidences'' between stable submanifolds and $\phi^n(C)$.   Roughly speaking,  \cite{LM} estimated this using tools from incidence geometry and used those estimates along with other dynamics tools to give strong quantitative bounds on how $\phi^n(C)$ is distributed in $X$.

\section{Examples} \label{secexamples}

Choosing $n$ points in $\RR^2$ gives us $2n$ real variables to play with.   Enforcing $U$ unit distances leads to $U$ equations in those variables.   If $U > 2n$,  such a system of equations is called overdetermined.   Intuitively,  if $U$ is much bigger than $2n$,  there should only be solutions because of some special structure present in the problem.  

The known examples make use of two types of structure: symmetry and number theory.

\subsection{Symmetry}

The unit distance problem is translation invariant.  If $p_1, p_2,  v \in \RR^2$,  then $|p_1 - p_2| = 1$ if and only if $| (p_1 + v) - (p_2 + v)| = 1$.   We can take advantage of this translation symmetry to build examples with many unit distances.

Suppose that $v_1, ..., v_s$ are unit vectors in $\RR^2$.   Suppose that $a = (a_1, ..., a_s) \in \{0, 1\}^s$ and define $p_a = \sum_{i=1}^s a_i v_i$.   Set $P = \{ p_a \}_{a \in \{0, 1 \}^s}$.   For generic choice of $v_1, ..., v_s$,  $|P| = 2^s$.   Let $n = 2^s = |P|$.

Say that $a,  a' \in \{0, 1 \}^s$ are neighbors if $a_i = a_i'$ for exactly $s-1$ choices of $i$.   If $a,a'$ are neighbors,  then $|p_a - p_{a'}| = 1$.   Each $a \in \{ 0,  1 \}^s$ has $s$ neighborbors,  and so

$$ |U(P)| = s 2^s = (\log_2 n) n. $$

\subsection{Diophantine equations} \label{subsecdioph}

For the unit distance problem and for many other problems in incidence geometry,  the best known examples are based on integer solutions to diophantine equations.   To see why diophantine equations might be natural,  let us back up and look at the incidence geometry problem in a slightly different way.   We can reformulate any incidence geometry problem in the following form.

Suppose that $Z \subset \RR^a \times \RR^b$ is a submanifold or subvariety.   Given finite sets $A \subset \RR^a$ and $B \subset \RR^b$,  define $I_Z(A,B) = Z \cap (A \times B)$.    Define

$$ I_{Z, max}(m,n) = \max_{|A| = m, |B| = n } |I_Z(A,B)|. $$

For example,  in the unit distance problem,  we can let $p_1, p_2, q_1,q_2$ be coordinates on $\RR^2 \times \RR^2$,  and we let $Z$ be the variety defined by

$$ (p_1 - q_1)^2 + (p_2 - q_2)^2 = 1. $$

\noindent Then $I_Z(A,B)$ is the number of unit distances between points $p \in A$ and $q \in B$.   More generally,  for each class of curves/surfaces $C$ we mentioned above,  there is a corresponding variety $Z_C$ so that $I_{C, max} = I_{Z_C, max}$.   

With this minor reformulation,  we can see how diophantine equations could be relevant in incidence geometry.   Suppose that $Z$ is an algebraic variety defined by equations with rational coefficients (which it usually is in examples).    We can choose $A$ and $B$ to be integer vectors with $\ell^\infty$ norm at most $H$,  and then $I_Z(A,B)$ is the set of integer points on $Z$ with $\ell^\infty$ norm at most $H$.   Minor variations of this construction give the best known examples for many problems in incidence geometry.

For future reference,  let us make a little notation.   We write $\ZZ_{\le H} := \{ a \in \ZZ,  |a| \le H \}$.   If $Z \subset \RR^k$ is an algebraic variety,  we write

$$ N_Z(\ZZ, H) = \# \{ \alpha \in \ZZ_{\le H}^k: \alpha \in Z \}. $$

\noindent If $Z \subset \RR^a \times \RR^b$, we can set $A = \ZZ_{\le H}^a$ and $B = \ZZ_{\le H}^b$ and then $I_Z(A,B) = N_Z(\ZZ, H)$.  

Erdos gave an example for the unit distance problem in the 1940s which was the best known example until recently.  The example is a small variation on this method.   We note that by scale invariance,  the unit in unit distance can be replaced by any other number.   So we can replace our original $Z$ by the variety $Z_m$ defined by

$$ (p_1 - q_1)^2 + (p_2 - q_2)^2 = m.  $$

\noindent The variety $Z_m$ is closely related to the variety $S_m \subset \RR^2$ defined by $x_1^2 + x_2^2 = m$.   Then $(p_1, p_2, q_1,q_2) \in Z_m$ if and only if $(p_1 - q_1, p_2 - q_2) \in S_m$.   If $H \ge 10 \sqrt m$,  then a typical point $p \in \ZZ_{\le H}^2$ participates in $N_{S_m}(\ZZ,  H)$ unit distances with other points $q \in \ZZ_{\le H}^2$.    So if we set $A = B = \ZZ_{\le H}^2$,  then we have

$$ | I_{Z_m}(A,B) | = N_{Z_m}(\ZZ,  H) \sim |A| N_{S_m}(\ZZ, H). $$

Next we choose $m$ strategically in order to maximize $N_{S_m}(\ZZ,  H)$.   We need to choose $m \lesssim H^2$ or else $N_{S_m}(\ZZ, H) = 0$.   We will choose $m$ with $m^2 < H$,  so that $N_{S_m}(\ZZ, H) = N_{S_m}(\ZZ)$,  the number of integer solutions to $x^2 + y^2 = m$.   This was studied classically in number theory.   The answer depends on the prime factorization of $m$.   It can be much larger than $\log m$,  but it is bounded by $m^{o(1)}$.   This method gives examples $P \subset \RR^2$ with $|P|=n$ where $|U(P)|$ is significantly bigger than $n \log n$ but still smaller than $C_\eps n^{1 + \epsilon}$ for every $\epsilon > 0$.

Analyzing number theoretic examples of this kind boils down to estimating $N_Z(\ZZ, H)$ for different varieties $Z$.  In the unit distance problem, it boils down to estimating $N_{S_m}(\ZZ, H)$.  There is a closely related problem of estimating the number of integer points on the hyperbola $H_m$ defined by $xy = m$.  

The whole rest of this examples section will be concerned with estimating quantities like $N_Z(\ZZ, H)$.   To start getting perspective, we recall a simple heuristic for guessing the order of magnitude of $N_Z(\ZZ, H)$.  To illustrate the method, suppose that $Z$ is the algebraic variety defined by the equation

$$ x_1^2 + x_2^2 + x_3^2 - x_4^2 - x_5^2 - x_6^2 = 0. $$

\noindent We are trying to count the number of integer solutions to this equation with $|x_i| \le H$.  This bound tells us that $|x_i|^2 \le H^2$ and so the left-hand side lies in $[-3 H^2, 3H^2]$.  Just as a heuristic, let us imagine that the left-hand side is evenly distributed in $[-3H^2, 3H^2]$.  In this case, the fraction of choices $(x_1, .., x_6) \in [-H, H]^6$ that solve the equation would be $\sim H^{-2}$ and so the number of solutions would be $\sim H^{-2} H^6 = H^4$.  For this equation, this simple heuristic is correct, and it can be proven using the circle method (which gives more precise estimates for the number of solutions of this equation).

If $Z$ is the zero set of a single polynomial $P$ with integer coefficients, then this simple heuristic only depends on the number of variables, $n$, and the degree of $P$.  The heuristic is

$$ N_Z(\ZZ, H) \heur H^{n - \deg(P)}. $$

If $Z$ is a hyperbola $H_m$ or circle $S_m$, then we have $n=2$ and $\deg(P) = 2$, and so the simple heuristic above gives

$$ N_Z(\ZZ, H) \heur 1. $$

The actual behavior of $N_{S_m}(\ZZ, H)$ and $N_{H_m}(\ZZ, H)$ is well understood.  Notice that $N_{H_m}(\ZZ)$ is just the number of factors of the integer $m$.  It depends on how $m$ factors into primes.  If $m$ is prime, then $N_{H_m}(\ZZ) = 4$.  If $m$ is a product of $r$ distinct primes, then $N_{H_m}(\ZZ) = 2^{r+1}$.  So as $m$ varies, $N_{H_m}(\ZZ)$ and also $N_{H_m}(\ZZ, H)$ oscillates in a jagged manner.  Nevertheless, the classical divisor bound in number theory states that $1 \le N_{H_m}(\ZZ, H) \le H^{o(1)}$, and so the simple heuristic above gives an accurate answer up to factors $H^{o(1)}$.  The situation for the circle is similar.  There is an exact formula for $N_{S_m}(\ZZ)$ in terms of the prime factorization of $m$.  Using this formula and the divisor bound, we get $0 \le N_{S_m}(\ZZ, H) \le H^{o(1)}$.  And so again the simple heuristic above gives a decent approximation.  

The classical bound for the number of integer points on a circle is important in our story,  so we state it as a theorem in the following form:

\begin{theorem} \label{thmclassdiophcircle} (Classical bound for the number of integer points on a circle) For every $\eps > 0$,  there is $C_\eps$ so that for all $m, H \in \NN$,

$$ N_{S_m}(\ZZ, H) \le C_\eps H^\epsilon.  $$

\end{theorem}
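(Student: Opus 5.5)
Note first that if $m > 2H^2$ there are no points of $S_m$ in $\ZZ_{\le H}^2$, so the count is $0$. We may therefore assume $m \le 2H^2$. In that case it suffices to prove the $H$-independent bound
$$ N_{S_m}(\ZZ) \le C_\eps m^{\eps}, $$
because $m^\eps \le 2^\eps H^{2\eps}$, and we can then rename $\eps$. So the plan reduces to bounding the total number $r_2(m)$ of integer solutions of $x^2+y^2=m$ by $m^{o(1)}$.

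To bound $r_2(m)$ I will pass to the Gaussian integers $\ZZ[i]$, which form a unique factorization domain. Each solution $(x,y)$ gives an element $z = x+iy$ with $z\bar z = m$, so $z$ divides $m$ in $\ZZ[i]$. Hence $r_2(m)$ is at most the number of Gaussian divisors of $m$ taken up to associates, times the $4$ units. Now factor $m=\prod p^{a_p}$ in $\ZZ$. A prime $p \equiv 3 \pmod 4$ stays prime in $\ZZ[i]$. A prime $p\equiv 1 \pmod 4$ splits as $\pi\bar\pi$, and $2$ ramifies as $-i(1+i)^2$. A Gaussian divisor is specified by choosing, for each rational prime $p \mid m$, an exponent for each Gaussian prime above it. This gives at most $(a_p+1)^2$ choices per prime. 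Therefore
$$ r_2(m) \le 4\prod_{p\mid m}(a_p+1)^2 = 4\, d(m)^2, $$
where $d(m)$ is the number of positive divisors of $m$. (One could instead use the exact formula $r_2(m)=4\sum_{d\mid m}\chi_{-4}(d)$, which gives $r_2(m)\le 4d(m)$ directly. Either version suffices.)

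It remains to prove the divisor bound $d(m)\le C_\eps m^\eps$; this is the main step. I will use the standard multiplicative argument. Since $d$ is multiplicative,
$$ \frac{d(m)}{m^\eps} = \prod_{p^a \| m} \frac{a+1}{p^{a\eps}}. $$
For primes $p \ge 2^{1/\eps}$ we have $p^{a\eps}\ge 2^a \ge a+1$, so each such factor is at most $1$. For each of the finitely many primes $p<2^{1/\eps}$, the function $a\mapsto (a+1)/p^{a\eps}\le (a+1)/2^{a\eps}$ is bounded by a constant $c_\eps$ depending only on $\eps$. Hence $d(m)\le c_\eps^{2^{1/\eps}} m^\eps$.

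Combining the three steps gives $N_{S_m}(\ZZ,H)\le r_2(m)\le 4C_\eps^2 m^{2\eps}\le C'_\eps H^{4\eps}$, which is the claimed bound after renaming $\eps$. The only nontrivial input is the arithmetic of $\ZZ[i]$, namely that it has unique factorization and the splitting behavior of rational primes in it.
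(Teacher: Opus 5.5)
Your proof is correct and follows the route the paper indicates. You reduce to $m \le 2H^2$ (otherwise the count is zero), bound the number of solutions through the prime factorization of $m$, and finish with the divisor bound $d(m) \le C_\eps m^\eps$, the same input behind Theorem~\ref{thmclassdiophhyp}. The paper invokes the exact formula for the factorization step, and your Gaussian-integer count gives $4d(m)^2$ rather than $4d(m)$, which is equally sufficient (the degenerate case $m=0$, if $0\in\NN$, is trivial since the count is $1$).
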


This bound is closely related to (and based on) a similar bound for the number of integer points on a hyperbola.

\begin{theorem} \label{thmclassdiophhyp} (Classical bound for the number of integer points on a hyperbola) Let $H_m$ be the hyperbola defined by $x y = m$.  For every $\eps > 0$,  there is $C_\eps$ so that for all $m, H \in \NN$,

$$ N_{H_m}(\ZZ, H) \le C_\eps H^\epsilon.  $$

\end{theorem}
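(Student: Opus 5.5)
We need to prove the divisor bound: the number of divisors $d(m)$ satisfies $d(m) \le C_\eps m^{\eps}$, and then relate this to $H$.

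First, a reduction from $m$ to $H$. If $m > H^2$, then any integer point with $|x|,|y| \le H$ has $|xy| \le H^2 < m$, so $N_{H_m}(\ZZ,H) = 0$. So we may assume $1 \le m \le H^2$. (Note $m \ge 1$ since $m \in \NN$; the degenerate case $m=0$ is excluded.) Each solution $(x,y)$ of $xy = m$ is determined by $x$, which is a divisor of $m$ up to sign. Hence
$$ N_{H_m}(\ZZ,H) \le 2\, d(m), $$
where $d(m)$ is the number of positive divisors of $m$. It therefore suffices to show $d(m) \le C_\eps m^{\eps}$, since then $N_{H_m}(\ZZ,H) \le 2 C_\eps H^{2\eps}$, and we rename $\eps$.

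Next, the divisor bound. Write $m = \prod_p p^{a_p}$, so that $d(m) = \prod_p (a_p+1)$ by unique factorization. Then
$$ \frac{d(m)}{m^\eps} = \prod_{p \mid m} \frac{a_p + 1}{p^{\eps a_p}}. $$
We bound this product factor by factor, splitting into two cases according to the size of $p$.

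\emph{Large primes, $p \ge 2^{1/\eps}$.} Here $p^{\eps a_p} \ge 2^{a_p} \ge a_p + 1$, so each such factor is at most $1$.

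\emph{Small primes, $p < 2^{1/\eps}$.} There are finitely many such primes, depending only on $\eps$. For each of them, the function $a \mapsto (a+1)/p^{\eps a} \le (a+1)/2^{\eps a}$ is bounded over all $a \ge 0$ by a constant $c_\eps$, since exponential growth beats linear growth.

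Combining the two cases gives
$$ d(m) \le c_\eps^{\,2^{1/\eps}}\, m^\eps =: C_\eps\, m^{\eps}, $$
which completes the argument.

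There is no real obstacle in this proof. The only point needing care is the large/small prime split, which must make the constant depend only on $\eps$ and not on $m$. The reduction to $m \le H^2$ is what converts the bound in terms of $m$ into the stated bound in terms of $H$.
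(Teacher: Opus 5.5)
Your proof is correct and follows the route the paper indicates. The paper gives no proof of its own: it only observes that $N_{H_m}(\ZZ)$ counts the signed divisors of $m$ and cites the classical divisor bound, and your reduction to $m \le H^2$ followed by the large/small prime split is the standard textbook proof of that bound; you are also right to exclude $m=0$, where there are $4H+1$ points and the statement would fail.
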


\subsection{More general number fields} \label{subsecgenfield}

The constructions in the last subsection can be generalized to other number fields.   Suppose that $k$ is a number field and $O_k$ is the ring of integers in $k$.    We need some analogue of $\ZZ_{\le H}$.   There are a few slightly different reasonable definitions.   We will work with algebraic integers with bounded house,  and we recall the definition of the house.   Suppose that $\psi_i: k \rightarrow \RR$ or $\CC$ are the embeddings of $k$ into $\RR$ or $\CC$.   If $k$ is a degree $d$ number field, and it has $r_1$ real embeddings and $r_2$ complex embeddings, then we have $d = r_1 + 2 r_2$.  We can put together all the embeddings $\psi_i$ to form one embedding $\psi: k \rightarrow \RR^{r_1} \times \CC^{r_2}$.  We write $V = \RR^{r_1} \times \CC^{r_2}$, which we view as a $d$-dimensional Euclidean space.  With this setup $\psi(O_k)$ is a lattice in $V$.  

Now we can define the house of $\alpha \in O_k$ by:

\begin{equation} \label{defhouse}
\house(\alpha) = \max_i |\psi_i(\alpha) |
\end{equation}

We write $O_{k, \le H}$ for the set of algebraic integers of house at most $H$:

$$O_{k,  \le H} := \{ \alpha \in O_k: \house(\alpha) \le H \}. $$

If $Z$ is a variety defined by polynomial equations in $n$ variables, then

$$N_Z(O_k, H) := \# \{ (\alpha_1, ..., \alpha_n) \in O_{k, \le H}^n \cap Z \}. $$

We can now make examples as above with $A = O_{k,\le H}^a$ and $B = O_{k, \le H}^b$ and we see that $|I_Z(A,B)| = N_Z(O_k, H)$.   

To the best of my knowledge, these constructions appeared in incidence geometry surprisingly recently.   In the context of point-line incidences,  they appeared in \cite{GS} and \cite{Cu} starting in 2021.   The analysis in \cite{GS} and \cite{Cu} shows that examples for point-line incidences built on number fields work about as well as the older examples built on integers, but no better.

To see why this would happen, let us return to our simple heuristic about $N_Z(\ZZ, H)$ and generalize it to a number field $k$.  First we need to estimate $|O_{k, \le H}|$, and then we can use this to estimate $N_Z(O_k, H)$.

Recall that $\psi(O_k)$ is a lattice in $V$.  Let us call this lattice $\Lambda$.  Note that $O_{k, \le H}$ is just the intersection of $\Lambda$ with a cube of side length $H$.   A little more precisely, if we write $B_{\RR, H}$ for the ball of radius $H$ in $\RR$ and $B_{\CC, H}$ for the ball of radius $H$ in $\CC$, then we can define $Q_H \subset V$ as the ``cube''  $B_{\RR, H}^{r_1} \times B_{\CC, H}^{r_2}$, and then  $O_{k, \le H} = \Lambda \cap Q_H$.  A key character in the story is the covolume of $\Lambda$.  The covolume of $\Lambda$ is closely related to the discriminant of $k$, $\Delta_k$: $ \Cov(\Lambda) = 2^{-r_2} \Delta_k^{1/2}. $   So heuristically, 

$$ |O_{k, \le H} | \heur \frac{\Volume(Q_H) }{\Cov(\Lambda)} \approx \frac{H^d}{\Delta_k^{1/2}}. $$

Now let us return to the example when $Z$ is defined by the equation $x_1^2 + x_2^2 + x_3^2 - x_4^2 - x_5^2 - x_6^2 = 0$.  We want to count the solutions to this equation with $x_i \in O_{k, \le H}$.  The number of choices for the $x_i$ is $|O_{k, \le H}|^6$.  The left-hand side clearly lies in $O_{k, \le 6H^2}$.  So naively, we might expect the fraction of choices $(x_1, ..., x_6)$ that solve the equation to be $\heur |O_{k, \le 6 H^2}|^{-1}$.  Plugging in our heuristic for $|O_{k, \le H}|$, we get

$$ N_Z(O_k, H) \heur |O_{k, \le 6 H^2} |^{-1} |O_{k, \le H}|^6 \approx  \left( \frac{ H^{2d} }{\Delta_k^{1/2} }  \right)^{-1} \left( \frac{H^d}{\Delta_k^{1/2}} \right)^6  = \frac{H^{4d}}{\Delta_k^{5/2}} \heur \frac{1}{\Delta_k^{1/2}} |O_{k, \le H}|^4. $$

As above, if $Z$ is defined by a polonomial in $n$ variables, then this heuristic only depends on the number of variables $n$ and the degree of $P$, and it gives

$$ N_Z(O_k, H) \heur \frac{1}{\Delta_k^{ \frac{ \deg (P) - 1}{2}}} |O_{k, \le H} |^{n - \deg P}. $$

We recall that $\Delta_k$ is a positive integer, and so $\Delta_k \ge 1$, and so this heuristic gives $N_Z(O_k, H) \lessapprox |O_{k, \le H} |^{n - \deg P}$.  This heuristic predicts that number fields of all degrees give basically similar examples, but there is a constant factor out front that gets smalelr as the discriminant goes up.

In particular, if $Z$ is a circle or hyperbola, then $n=2$ and $\deg P = 2$ and our heuristic reads

$$ N_{S_m}(O_k, H), N_{H_m}(O_k, H) \heur \frac{1}{ \Delta_k^{1/2}}. $$

Number fields also appeared in computational work on the unit distance problem.    In \cite{Eng},  Engel et al showed that finite subsets of the `Moser lattice' optimize $U_{max}(n)$ for $n \le 15$.   They point out that the Moser lattice is the ring of integers of a degree 4 number field.   A little later in \cite{Ale},  Alexeev et al computed $U_{max}(n)$ for $n \le 21$ and observed that the optimal configurations are still finite subsets of the Moser lattice.   It is quite challenging to compute $U_{max}(n)$ exactly,  as we will discuss more later,  and $U_{max}(21)$ is the state of the art.   This limited data might suggest that number fields would play a role in the optimal unit distance examples, but it was hard to guess how the example would generalize as $n \rightarrow \infty$.  

\subsection{Recent examples} \label{subsecrecent}

Recently Open AI gave a counterexample to the unit distance conjecture by showing that $N_{S_m}(O_k, H)$ can be far larger than predicted by the simple heuristic above.  

\begin{theorem} \label{countercircle} (Open AI, \cite{OAI}) There is an exponent $\beta > 0$ so that the following holds.   We can choose a number field $k$, 
a non-zero $m \in k$,  and a positive number $H$ so that 

\begin{itemize}

\item $|O_{k, \le H}|$ is arbitarily large.

\item $N_{S_m}(O_k,H) \ge |O_{k, \le H}|^\beta$.

\end{itemize}

\end{theorem}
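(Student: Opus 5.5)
The plan is to build the example from an infinite class field tower, with $i$ in the base field. Then $x^2+y^2=m$ factors as $(x+iy)(x-iy)=m$, and counting points on $S_m$ becomes counting divisors of $m$ of controlled house. Write $u=x+iy$. Any $u\in O_k$ with $u \mid m$ and both $u$ and $m/u$ of house $\le H$ gives $x=(u+m/u)/2$ and $y=(u-m/u)/(2i)$. These have house $\le H$, and the factor $2$ can be handled by restricting to a congruence class or by replacing $m$ with $4m$. So the goal is to produce, in fields $k$ of degree $d\to\infty$, exponentially many in $d$ such divisors, while $|O_{k,\le H}|$ grows only like $c_1^d$ for a constant $c_1$.

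\emph{Step 1: the tower.} I would pick a base field $K\ni i$ (for instance $\QQ(i)$ adjoined with square roots of many primes) and a finite set $S$ of rational primes $p\equiv 1 \bmod 4$. By Golod--Shafarevich, in the version with prescribed splitting of Hajir--Maire type, $K$ should have an infinite unramified $2$-class field tower in which every prime above $S$ splits completely. For each field $k$ in the tower, of degree $d$, this gives two properties. First, the root discriminant is constant, so $\Delta_k^{1/d}\le C$. Second, each $p\in S$ splits into $d$ primes of degree one, which come in pairs $\mathfrak p,\bar{\mathfrak p}$ (conjugate under $i\mapsto -i$, i.e.\ $(p)=\pi\bar\pi$ already in $\QQ(i)$). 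Since $\Delta_k^{1/d}\le C$ and $\Cov(\Lambda)=2^{-r_2}\Delta_k^{1/2}$, the heuristic of Section \ref{subsecgenfield} can be made rigorous for lattice points in the cube $Q_H$: $|O_{k,\le H}|\le (c_2 H)^d$ once $H$ exceeds a constant.

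\emph{Step 2: many principal divisors.} Let $m=\prod_{p\in S'}p$ for a fixed set $S'\subset S$ of $t$ primes. Then $(m)$ is a product of $dt$ degree-one prime ideals, arranged in $dt/2$ conjugate pairs. Choosing one prime from each pair gives $2^{dt/2}$ ideals $\mathfrak a$ with $\mathfrak a\bar{\mathfrak a}=(m)$. The class number satisfies $h_k\le \Delta_k^{1/2}(c\log\Delta_k)^{d}\le c_3^d$. Pigeonholing the $2^{dt/2}$ ideals into classes, and combining two ideals in the same class in the usual way (split the pairs into two halves and pair $\mathfrak a_1$ with $\bar{\mathfrak a}_2$ on one half), gives $\gtrsim 2^{dt/2}/c_3^{d}$ principal ideals $\mathfrak a=(u)$ with $\mathfrak a\bar{\mathfrak a}=(m)$. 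Choosing $t$ large in terms of $c_3$ makes this count $\ge c_4^d$ with $c_4>1$.

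\emph{Step 3: small generators. I expect this to be the main obstacle.} Each principal $\mathfrak a$ has norm $m^{d/2}$. I need a generator $u$ whose embeddings all satisfy $|\psi_j(u)|\le c_5\, m^{1/2}$, so that $u$ and $m/u$ (which generates $\bar{\mathfrak a}$) both have house $\le H:=c_5 m^{1/2}$. My first attempt would be to multiply any generator by a unit chosen to balance the logarithmic embedding. This requires the unit lattice to have covering radius $O(1)$ per coordinate, and controlling that needs regulator and successive-minima bounds that are not available in general. As a fallback, I would use Minkowski's theorem directly: apply it to the lattice $\psi(\mathfrak a)$, of covolume $\Cov(\Lambda)\, m^{d/2}$, inside the cube $Q_{H}$. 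Because $\Delta_k^{1/(2d)}\le C^{1/2}$, this produces nonzero $u\in\mathfrak a$ with $\house(u)\le c_5 m^{1/2}$. Then $|N(u)|\le c_5^d m^{d/2}$, so $(u)=\mathfrak a\mathfrak b$ with $N\mathfrak b\le c_5^d$. This loses a bounded-norm cofactor $\mathfrak b$. I would absorb it by running the counting in Step 2 with $m$ enlarged by a cofactor supported on the small split primes, which should cost only another factor $c^d$.

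\emph{Conclusion.} We get $N_{S_m}(O_k,H)\ge c_4^d$ and $|O_{k,\le H}|\le (c_2c_5 m^{1/2})^d$, where $m$, $c_4$, $c_2$, $c_5$ are fixed as $d\to\infty$. Taking $\beta=\log c_4/\log(c_2c_5m^{1/2})>0$ gives $N_{S_m}(O_k,H)\ge|O_{k,\le H}|^\beta$, while $|O_{k,\le H}|\ge c_6^d\to\infty$ because $H$ is a large constant and $\Delta_k^{1/d}$ is bounded.
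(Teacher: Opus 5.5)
Your route is genuinely different from the paper's. You fix $m$, make its primes split completely in an $S$-split class field tower, and pigeonhole the class group. The paper never fixes $m$: it averages over $m$ and counts $\Lambda^2$-points in $\tilde R_H$, using Blichfeldt--van der Corput on the convex bodies $K(a)$ cut out by $|x_j+iy_j|\le a_j$ and $|x_j-iy_j|\le H/a_j$. Two-sided control at every place is thus built in, and no splitting, class-group or unit input is needed.

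As written, however, your Step 3 has a genuine gap. A point of $S_m$ of house $\le H$ needs \emph{both} $u$ and $m/u$ to have house $\le H$, i.e.\ $m/H\le|\psi_j(u)|\le H$ at every place. Your claim that $|\psi_j(u)|\le c_5m^{1/2}$ for all $j$ already gives this is false. Even when $(u)=\mathfrak a$ exactly, we have $|N(u)|=m^{d/2}$, and the upper bounds allow (in your totally complex $k$) one conjugate as small as $c_5^{1-d/2}m^{1/2}$. Then $\house(m/u)$ can be as large as $c_5^{d/2-1}m^{1/2}$, which is not a constant. Minkowski's theorem only gives upper bounds on conjugates, so the fallback does not address this. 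It also creates a second problem: $(u)=\mathfrak a\mathfrak b$, where $\mathfrak b$ is an uncontrolled ideal of norm up to $c_5^d$ whose primes can lie over rational primes as large as $c_5^d$. So $m/u$ need not be integral, and no fixed enlargement of $m$ can absorb every possible $\mathfrak b$ while keeping $H$ bounded in $d$. Tellingly, the Minkowski step never uses that $\mathfrak a$ is principal, so Step 2 would be doing no work.

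What is missing is a mechanism that forces the lower bounds. Two options:
\begin{itemize}
\item Take $k$ CM, e.g.\ $k=F(i)$ with $F$ from a tower of totally real fields in which the primes of $S$ split completely. A generator of $\mathfrak a$ with $\mathfrak a\bar{\mathfrak a}=(m)$ then satisfies $u\bar u=\epsilon m$ with $\epsilon\in O_F^\times$ totally positive. Also pigeonhole $\epsilon$ modulo $N_{k/F}(O_k^\times)\supseteq(O_F^\times)^2$ (index $\le 2^{[F:\QQ]}$) and pair; you get $w$ with $w\bar w=m^2$, hence $|\psi_j(w)|=m$ at every place automatically. This is the kind of factorization mechanism the paper attributes to \cite{OAI}.
\item Rescue your abandoned unit-balancing attempt by averaging instead of a covering-radius bound. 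Write $\ell(u)=(\log|\psi_j(u)|)_j$ and let $W$ be the trace-zero hyperplane. If $\ell(u_1)-\ell(u_2)$ is within sup-distance $1$ of $\ell(O_k^\times)$, then for a suitable unit $\epsilon$ the element $w=\epsilon u_1(m/u_2)$ satisfies $e^{-1}m\le|\psi_j(w)|\le em$ everywhere and divides $m^2$. A Cauchy--Schwarz count on the torus $W/\ell(O_k^\times)$ gives $\ge M^2/C^d$ such pairs among $M$ principal ideals, hence $\ge M/C^d$ distinct $w$. This uses $h_kR_k\le C^d$, a lower bound for central sections of cubes, and the fact that units with all conjugates in $[e^{-1},e]$ number at most $|O_{k,\le e}|\le(eC_1)^d$.
\end{itemize}

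Two smaller repairs are also needed:
\begin{itemize}
\item The bound $h_k\le\Delta_k^{1/2}(c\log\Delta_k)^d$ is superexponential when $\log\Delta_k\asymp d$. Use a version with a $1/(d-1)!$ factor, or the residue bound behind $h_kR_k\le C^d$.
\item ``Choose $t$ large in terms of $c_3$'' hides a circularity. A Golod--Shafarevich $S$-split tower needs the $2$-rank $\rho$ of $\mathrm{Cl}_S(K)$ to be $\gtrsim\sqrt{|S_K|}$, which makes the root discriminant of $K$, hence $c_3$, grow with $t$. The argument closes only because $t\approx\rho^2$ beats $\log c_3\approx\rho\log\rho$.
\end{itemize}
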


This theorem shows that the classical upper bound for the number of integer points on a circle, Theorem \ref{thmclassdiophcircle}, does not generalize to arbitrary number fields in a uniform way.   Theorem \ref{countercircle} is the main ingredient in the counterexample to the unit distance conjecture.

A little later, inspired by this example,  Bloom,  Sawin,  Schildkraut, and Zhelezov found a similar counterexample for hyperbolas.

\begin{theorem} \label{counterhyp} (\cite{BSSZ}) There is an exponent $\beta > 0$ so that the following holds.   We can choose a number field $k$, 
a non-zero $m \in k$,  and a positive number $H$ so that 

\begin{itemize}

\item $|O_{k, \le H}|$ is arbitarily large.

\item $N_{H_m}(O_k,H) \ge |O_{k, \le H}|^\beta$.

\end{itemize}

\end{theorem}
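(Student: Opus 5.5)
The plan is to mimic the divisor-function mechanism behind Theorem \ref{thmclassdiophhyp}, but inside number fields of growing degree whose root discriminant $R = \Delta_k^{1/d}$ stays bounded. Such fields exist by Golod--Shafarevich: infinite class field towers. I would also arrange that a fixed small rational prime $p$ splits completely all the way up the tower, which the Ihara and Tsfasman--Vl\u{a}du\c{t} style constructions allow.

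In such a field $k$ of degree $d$, the heuristic count from Section \ref{subsecgenfield} gives $|O_{k,\le H}| \approx (H/\sqrt R)^d$. So if I take $H = C_0\sqrt R$ for a suitable constant $C_0$, then $|O_{k,\le H}| = e^{O(d)}$. The target therefore becomes: produce some $m\in O_k$ with $e^{cd}$ factorizations $m = xy$, $x,y\in O_{k,\le H}$. This gives the theorem with $\beta \approx c/O(1)$, uniformly as $d\to\infty$.

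The factorizations come from prime ideals. Pick $r = \delta d$ of the degree-one primes $\mathfrak p_1,\dots,\mathfrak p_r$ above $p$, where $\delta$ is a small constant. For each subset $T\subset\{1,\dots,r\}$ set $I_T = \prod_{i\in T}\mathfrak p_i$, so $N(I_T)\le p^{r}$.

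By Minkowski's theorem applied to the lattice $\psi(I_T)$, which has covolume $N(I_T)\,2^{-r_2}\Delta_k^{1/2}$, there is a nonzero $x_T\in I_T$ with every $|\psi_j(x_T)| \le 2\,(N(I_T))^{1/d} \sqrt R \le 2p^{\delta}\sqrt R$. Hence $x_T$ has house $O(1)\cdot\sqrt R$. However, $x_T$ generates $I_T J_T$ with an extra ideal $J_T$ of norm at most $e^{O(d)}$, not $I_T$ itself. This discrepancy is the main obstacle: the class group and the unit lattice prevent a direct choice of balanced generators.

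To handle it, I would use pigeonhole rather than trying to kill the class group. The $2^r$ subsets $T$ are sorted by the ideal $J_T$ together with a coarse cell recording the archimedean profile $(\log|\psi_j(x_T)|)_j$. The number of boxes is bounded by the number of ideals of norm $e^{O(d)}$ times the number of profile cells. I would need this to be $\le 2^{r(1-\eta)}$, and getting that requires tuning $\delta$ against $R$ and $p$. This tuning is exactly the quantitative step I expect to be hardest.

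Within one box $\mathcal B$, take $m = x_{T_0}\,x_{T_0'}$ for suitable fixed $T_0, T_0' \in \mathcal B$. Each $T\in\mathcal B$ then yields the factorization $m = x_T\cdot (m/x_T)$, and I need $m/x_T$ to be integral. I would get this by choosing $m$ as a product of two complementary box elements, so that the ideal of $m/x_T$ is $I_{T^c}J$. The matching profiles guarantee that the cofactor also has house $O(\sqrt R)$.

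Distinct $T$ give distinct $x_T$, since they generate different ideals. This yields $\ge 2^{\eta r} = e^{c d}$ solutions with house $\le H$, which is what is required.

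If the pigeonhole count fails, the fallback is to choose the tower so that class numbers become trivial in the limit and the unit lattice has a fundamental domain of bounded size per coordinate. Then each $I_T$ is principal with a generator balanced up to a factor $O(1)$ in every embedding, and the count $2^r$ is immediate.
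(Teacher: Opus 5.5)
The step that fails is the pigeonhole, and tuning cannot fix it. The bound you propose for the number of boxes is already larger than $2^r$.

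\textbf{The box count exceeds $2^r$.} Your Minkowski step gives $\house(x_T)\le 2N(I_T)^{1/d}\sqrt R$. Hence $N(J_T)=|N(x_T)|/N(I_T)\le(2\sqrt R)^d$. By Odlyzko's discriminant bounds, $R>22$ for large $d$, so $2\sqrt R>p$ for the small primes you have in mind (e.g.\ $p\le 7$). Therefore each of the $2^d$ squarefree products of the $d$ primes over $p$ is an admissible $J$. Allowing exponents with $\sum_i e_i\le d$ gives at least $\binom{2d}{d}\approx 4^d$ admissible ideals. With a single split prime you have $r\le d$, so the count is never $\le 2^{r(1-\eta)}$ for any choice of $\delta$. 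Refining the count by ideal classes just brings back the class group problem you were trying to avoid.

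\textbf{The ideal coordinate cannot be coarsened.} The archimedean cells are not the issue: cells of large constant width $L$ cost only a factor $e^{L}$ in $H$, hence a smaller $\beta$. But with $m=x_{T_0}x_{T_0^c}$, integrality of $m/x_T$ requires $J_T\mid I_{T^c}J_{T_0}J_{T_0^c}$, so $J_T$ must match exactly. Separately, a box containing many $T$ need not contain a complementary pair $T_0,T_0^c$. You would have to box the pairs $(T,T^c)$, which squares the count.

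\textbf{The fallback is unavailable.} Every layer of an infinite $p$-class field tower has class number divisible by $p$, because the part of the tower above it is a nontrivial unramified pro-$p$ extension. A log-unit lattice with a fundamental domain of bounded size in every coordinate is a further hypothesis that nothing in your construction supplies.

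\textbf{How the paper avoids this.} The paper never fixes $m$ and never works with ideals. It proves Theorem \ref{thmlowerhyp}, a lower bound for $\sum_m N_{H_m}(O_k,H)=\#(\Lambda^2\cap R_H)$ with $R_H=\{|x_i|,|y_i|,|x_iy_i|\le H\}$. It does this by applying Lemma \ref{lemlower} to the symmetric convex sets $K(a)=\{|x_i|\le a_i,\ |y_i|\le H/a_i\}\subset R_H$ for dyadic $a$, and controlling how these sets overlap. Blichfeldt--van der Corput produces the pairs $(x,y)$ directly. The class group and units never appear; they enter only through $\Cov(\Lambda^2)\approx\Delta_k$. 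The gain is a factor $\log H$ at each archimedean place. For totally real $k$ with $\Delta_k<C_{GS}^d$ and $H$ a large constant, the average of $N_{H_m}(O_k,H)$ over nonzero $m\in O_{k,\le H}$ is at least $[C^{-1}C_{GS}^{-1.01}\log H]^d$, while $|O_{k,\le H}|\le (CH)^d$. Any nonzero $m$ at least the average then gives the theorem.

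Your split-prime idea is closer to the factorization mechanism the paper attributes to \cite{OAI}. Making such a mechanism rigorous needs real control of the class group and units, which a count of boxes does not provide.
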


In fact,  \cite{BSSZ} proved that we can take $m=1$ in the above theorem.   This special case is particularly important and they use it to give a counterexample to the sum-product conjecture in combinatorial number theory.  

The proofs in \cite{OAI} and \cite{BSSZ} are based on two different mechanisms.   The proof in \cite{OAI} is based on constructing a number field where $m$ factors in a particular way.  The proof in \cite{BSSZ} is based on studying the units of $k$.  
 A paper by Mythos \cite{Anth} gave an alternate proof of Theorem \ref{countercircle} using units in a manner similar to \cite{BSSZ}.   See also Thomas Bloom's blog \cite{Bl} for a nice discussion of these approaches.  
 
 The size of the discriminant $\Delta_k$ will play a key role in the proofs of these theorems.  Before we turn to the proofs, we discuss this.
 
\subsection{How big is the discriminant of a degree $d$ number field} \label{subsecdiscr}

According to our heuristic above, $N_Z(O_k, H)$ tends to be larger when $\Delta_k$ is small.  Indeed it turns out to be important to choose $\Delta_k$ as small as possible.  This leads to the question: how small  the discrimant can be for a degree $d$ field.    Let us write $\Delta_{min}(d)$ for the infimal discriminant of a degree $d$ number field.  As we mentioned above, $\Delta_k$ is a positive integer, and so $\Delta_{min}(d) \ge 1$.  Minkowski proved that there is a constant $C_{Mink} > 1$ so that $\Delta_{min}(d) > C_{Mink}^d$.   On the other hand,  a degree $d$ cyclotomic field has discriminant around $d^d$ and so $\Delta_{min}(d)$ grows at most like $d^d$.    It was an open question for a long time whether $\Delta_{min}$ grows at most exponentially.   I have heard that many people believed that $\Delta_{min}$ should grow superexponentially in $d$ cf. page 124 in \cite{O}.   

The question was resolved by Golod-Shafarevich in the late 1960s.   Golod and Shafarevich were studying towers of unramified field extensions.  We do not define unramified here, but it is a basic property of field extensions.  If $k_2$ is a degree $e$ field extension of a number field $k_1$, then $\Delta_{k_2} \ge \Delta_{k_1}^e$.  If $k_2$ is an unramified extension, then $\Delta_{k_2} = \Delta_{k_1}^e$, and if $k_2$ is a ramified extension of $k_1$ then $\Delta_{k_2} > \Delta_{k_1}^e$.  Golod and Shafarevich proved that there is a number field $k_1$ and an infinite tower of unramified finite extensions $k_1 \subset k_2 \subset ...$.  As a corollary, they proved the following result about discriminants of high degree number fields:

\begin{theorem} \label{thmmart} (Golod-Shafarevich,  \cite{GS},  1960s) There is a constant $C_{GS}$ and a sequence of number fields $k$ with degree $d \rightarrow \infty$ so that $\Delta_k < C_{GS}^d$.  

We write $r_1(k)$ for the number of real embeddings of $k$ and $r_2(k)$ for the number of complex embeddings of $k$.  For any rational number $q$, there is a constant $C_{GS,q}$ and a sequence of number fields $k$ with degree $d \rightarrow \infty$ so that $r_2(k) = q r_1(k)$ and $\Delta_k < C_{GS,q}^d$.   \end{theorem}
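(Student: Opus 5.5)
The plan is to deduce the statement from the existence of an infinite unramified tower, using the discriminant identity recalled just before the theorem. Suppose $k_1$ is a number field with an infinite tower of finite unramified extensions $k_1 \subset k_2 \subset \cdots$. If $e_j = [k_j : k_1]$, then unramifiedness gives $\Delta_{k_j} = \Delta_{k_1}^{e_j}$. Write $d_j = e_j \deg(k_1)$. Then
$$ \Delta_{k_j}^{1/d_j} = \Delta_{k_1}^{1/\deg(k_1)}, $$
which is independent of $j$. So the first part of the theorem holds with $C_{GS} = \Delta_{k_1}^{1/\deg(k_1)} + 1$. The whole content is therefore the existence of such a tower, and that is the step I expect to be the main obstacle.

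For the tower I would follow Golod--Shafarevich and work with a fixed prime $p$ (say $p = 2$), considering the maximal unramified pro-$p$ extension $k_1^{(p)}$ of $k_1$ with Galois group $G$. The steps are as follows.

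\begin{enumerate}
\item Class field theory identifies $G^{ab}$ with the $p$-part of the class group of $k_1$. It also bounds the minimal number of relations $r(G)$ in terms of the generator rank $d(G) = \mathrm{rank}_p \mathrm{Cl}(k_1)$, via the unit group: $r(G) \le d(G) + r_1 + r_2$ (up to a $\pm 1$ for roots of unity).
\item The group-theoretic Golod--Shafarevich inequality states that a finite $p$-group satisfies $r > d^2/4$. Hence $G$ is infinite as soon as $d(G)^2/4 \ge d(G) + r_1 + r_2 + 1$.
\item Genus theory makes $d(G)$ large. For $k_1 = \QQ(\sqrt{-D})$ with $D$ a product of many primes, the $2$-rank of the class group is about the number of prime factors of $D$ minus one. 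Taking eight or so primes gives $d(G)^2/4 > d(G) + 2$, so the tower is infinite.
\end{enumerate}

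The hard parts are the cohomological bound on relations via units and the Golod--Shafarevich power-series inequality. Since the survey only needs the statement, I would cite these rather than reprove them.

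For the refined statement with $r_2(k) = q\, r_1(k)$, the plan is to control signatures inside the tower. There are two cases.

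\begin{itemize}
\item If $k_1$ is totally complex, every extension is totally complex, so $r_1 = 0$. This gives only the degenerate ratio, so I would instead allow ramification at the infinite places in a controlled way.
\item Start from a base field $k_1$ with the prescribed ratio $r_2/r_1 = q$. For instance, take a compositum of a totally real field of suitable degree with a CM piece, so that the number of real places relative to complex places matches $q$.
\end{itemize}

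Then I would take the tower in which all infinite places split completely, i.e.\ unramified at finite primes and with real places remaining real. In such a tower the ratio $r_2/r_1$ is preserved, since each real place splits into $e$ real places and each complex place into $e$ complex places.

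Infinitude of this restricted tower again follows from Golod--Shafarevich. The only change is that the relation bound acquires the terms for the real places that are required to split, so one needs $d(G)^2/4$ to exceed roughly $d(G) + 2(r_1 + r_2)$. This can be arranged by forcing many primes to ramify in $k_1/\QQ$, making the relevant class group rank large.

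The discriminant computation is then identical to the one above, with constant $C_{GS,q} = \Delta_{k_1}^{1/\deg k_1} + 1$. The main obstacle throughout remains the infinitude criterion; the step that bounds relations using units (now with decomposition conditions at infinity) is where I expect the most care to be needed.
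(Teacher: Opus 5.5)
Your argument for the first statement is exactly the paper's. The survey deduces it from an infinite unramified tower $k_1\subset k_2\subset\cdots$ together with $\Delta_{k_j}=\Delta_{k_1}^{[k_j:k_1]}$, citing Golod--Shafarevich for the tower; your sketch of that (genus theory for $\QQ(\sqrt{-D})$, Shafarevich's relation bound, the Golod--Shafarevich inequality) is the standard one. For the signature refinement the paper simply cites Martinet. Your outline there is also right: start from a base field with $r_2/r_1=q$ and take a tower unramified at \emph{all} places, so that both $\Delta^{1/d}$ and $r_2/r_1$ stay constant.

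The gap is in producing that base field, which is the actual content of the refinement.
\begin{itemize}
\item Your suggested instance, a compositum of a totally real field with a CM field, is totally complex. Every embedding of the compositum restricts to a non-real embedding of the CM subfield, so $r_1=0$ and no finite $q$ is realized.
\item ``Forcing many primes to ramify in $k_1/\QQ$'' does not by itself make any class group rank large. Ramification only helps through genus theory for a cyclic degree-$p$ extension of a \emph{fixed} base field.
\item Your first bullet, about allowing ramification at infinite places, should be dropped. It cannot create real places, and in the tower it would destroy the ratio you want to preserve.
\end{itemize}

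Here is a repair. Take any $F$ with $r_2(F)=q\,r_1(F)$, e.g.\ $F=\QQ(\theta)$ with $\theta$ a root of an irreducible polynomial having exactly $r_1$ real roots. Set $k_1=F(\sqrt{\alpha})$, where $\alpha\in O_F$ is totally positive and has valuation $1$ at $t$ chosen primes not above $2$. Such an $\alpha$ exists by CRT, then adding a large positive rational integer lying in the squares of those primes.

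Real places split in $k_1/F$, so $r_1(k_1)=2r_1(F)$ and $r_2(k_1)=2r_2(F)$, and the ratio is still $q$. Genus theory (Chevalley's ambiguous class formula) gives $\mathrm{rk}_2\,\mathrm{Cl}(k_1)\ge t-c(F)$, with $c(F)$ depending only on $F$. Since $r_1(k_1)+r_2(k_1)$ does not change as $t$ grows, your criterion $d^2/4\ge d+r_1+r_2+1$ holds for $t$ large. Hence the wide-sense Hilbert $2$-class field tower of $k_1$ is infinite, and all real places split completely along it.

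Alternatively, use an odd prime $p$ and a cyclic degree-$p$ extension of $F$. Real places can never ramify in a $p$-extension for odd $p$, so the issue at infinity disappears entirely.

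Relatedly, no extra relations are needed for the real places. Shafarevich's bound $r\le d+r_1+r_2-1+\delta$ is already the bound for the tower unramified at the infinite places, with $d$ the $p$-rank of the ordinary (wide) class group.
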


Remark: This theorem is an immediate corollary of the work of Golod and Shafarevich.  It is stated explicitly by Martinet in \cite{Ma}, which also gave explicit bounds for the constant.
  These number fields are deep and remarkable examples.  There are several variations on the proofs of Theorems \ref{countercircle} and \ref{counterhyp}.  All variations crucially use Theorem \ref{thmmart}.

\subsection{Estimating the number of $O_k$ points on varieties more carefully} \label{subsecdir}

In this subsection,  we sketch one approach to proving Theorems \ref{countercircle} and \ref{counterhyp}.  Our proof is inspired by the proofs of \cite{BSSZ} and \cite{Anth} using units, but we write it in a somewhat different way.   One of our goals is to give some intuition why these curves contain so many $O_k$ points with small house.  We will try to explain what was wrong with the simple heuristic from the last sections.

As a warmup,  let us consider the number of integer points on a hyperbola $H_m$.    The number of integer points on the hyperbola $H_m$ is the number of divisors of $m$,  and it has been extensively studied in number theory.   One classical result is Dirichlet's theorem on the average size of $N_{H_m}(\ZZ)$.   

\begin{theorem} (Dirichlet) 

$$ \frac{1}{M} \sum_{m=1}^M N_{H_m}(\ZZ) \sim \log M. $$

\end{theorem}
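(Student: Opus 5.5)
Since $N_{H_m}(\ZZ)$ counts integer pairs $(x,y)$ with $xy = m$, for $m \ge 1$ it equals $2\tau(m)$, where $\tau(m)$ is the number of positive divisors; the factor $2$ accounts for the sign choice. So the statement reduces to the classical estimate $\sum_{m \le M} \tau(m) \sim M \log M$, up to the harmless constant $2$ (or, with the paper's normalization, counting positive solutions only). I will treat the sum $D(M) := \sum_{m=1}^M \tau(m)$ and aim to show $D(M) = M \log M + O(M)$.

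The key step is to swap the order of summation. We have $D(M) = \#\{(a,b) \in \NN^2 : ab \le M\}$, which counts the lattice points under the hyperbola $xy = M$ in the first quadrant. Summing over $a$ first gives
$$ D(M) = \sum_{a=1}^M \left\lfloor \frac{M}{a} \right\rfloor = \sum_{a=1}^M \frac{M}{a} + O(M) = M \left( \log M + O(1) \right), $$
using $\lfloor M/a \rfloor = M/a + O(1)$ and the harmonic sum estimate $\sum_{a \le M} 1/a = \log M + O(1)$. The latter follows by comparing with $\int_1^M dt/t$. Dividing by $M$ gives $\frac{1}{M} D(M) = \log M + O(1) \sim \log M$. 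Multiplying by $2$ for signs only changes the constant, and this is absorbed into the meaning of $\sim$ as order of magnitude.

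There is no real obstacle here. The one point needing care is that the error from the floor functions totals only $O(M)$, which is smaller order than $M \log M$. If a sharper error term were wanted, I would use Dirichlet's hyperbola trick: count pairs with $a \le \sqrt M$ or $b \le \sqrt M$ and subtract the overlap. This gives $D(M) = M \log M + (2\gamma - 1) M + O(\sqrt M)$, but that refinement is not needed for the asymptotic as stated.

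The conceptual point worth recording for the paper's narrative is that the average is governed by the region under the hyperbola. On average, the number of integer points on $H_m$ is about $\log m$, so it is only slightly larger than the heuristic prediction $\heur 1$.
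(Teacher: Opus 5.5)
Your proof is correct, but it counts the lattice points under the hyperbola differently from the paper's sketch. You fiber over $x$ and count each column exactly as $\lfloor M/a\rfloor$, so the harmonic sum gives the two-sided estimate $\sum_{m\le M}\tau(m)=M\log M+O(M)$. That is, $N_{H_m}(\ZZ)$ has average $2\log M+O(1)$. You rightly note that the factor $2$ from signs is harmless under the paper's order-of-magnitude use of $\sim$; the paper's region $R_M$ also silently counts only positive solutions.

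The paper instead packs $\sim\log M$ disjoint dyadic rectangles $\{a/2\le x\le a,\ M/(2a)\le y\le M/a\}$ into $R_M$, each containing $\sim M$ lattice points. This rigorously gives only the lower bound $\gtrsim M\log M$, and the upper bound is left to the area heuristic.

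Your route buys completeness and sharpness in the integer case: both directions, with the exact leading constant. The paper's route buys portability. The picture of a box of size $a$ times a box of size $M/a$, repeated over dyadic scales, is exactly what Section~\ref{secrigdir} reuses for $\psi(O_k)\subset\RR^{r_1}\times\CC^{r_2}$. There a fiber is no longer an interval whose lattice points are known to within $O(1)$. One can only bound lattice points from below in symmetric convex bodies (Lemma~\ref{lemlower}), so the rectangles become the origin-centered boxes $K(a)$ and the overlap multiplicity $\mu(a,b)$ must be controlled.
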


Dirichlet actually proved much more refined estimates,  but this will suffice for our purpose.    Here is some intuition and a proof sketch for this theorem.  Note that $\sum_{m=1}^M N_{H_m}(\ZZ)$ is equal to the number of integer lattice points in the region $R_M$ defined by

$$ R_M := \{ (x,y) \in \RR^2: 1 \le x,y \le M,  1 \le xy \le M \}. $$

\noindent The area of $R_M$ is about $M \log M$.   Therefore,  we might expect that $R_M$ contains $\sim M \log M$ integer points.  To find the area of $R_M$, we could compute by an integral.  But to get intuition for future arguments, we also note that we can choose disjoint rectangles in $R_M$ with total area $\sim M \log M$.  Specifically, if $2 \le a \le M/2$ is dyadic, we can let $\Rect(a)$ be the rectangle $\{ (x,y) \in \RR^2: a/2 \le x \le a, \frac{M}{2a} \le y \le \frac{M}{a} \}$.  Each of these rectangles has area $\sim M$, and the number of rectangles is $\sim \log M$.  Also each rectangle $\Rect(a)$ contains $\sim M$ lattice points $(x,y)$, and so the number of lattice points in $R_M$ is $\gtrsim M \log M$. 

We can view Dirichlet's argument as a more refined version of the simple heuristic from Section \ref{subsecdioph}.   If $x,y \in \ZZ_{\le H}$,  then the heuristic in Section \ref{subsecdioph} said that $xy$ is evenly distributed in $\ZZ_{\le H^2}$.  However,  $xy$ is not evenly distributed in $\ZZ_{\le H^2}$.   One bias is that small numbers in $\ZZ_{\le H^2}$ are over-represented.   For instance, if we choose $x,y$ randomly in $\ZZ_{\le H}$,  the probability that $|xy| \le H$ is around $\frac{ \log H}{H}$, not around $\frac{1}{H}$.  
Dirichlet's argument measures and uses this bias to give a more accurate estimate for $N_{H_m}(\ZZ)$.  

Next we will generalize this average case argument to number fields $O_k$.   It will give a more refined version of the simple heuristic from Subsection \ref{subsecgenfield}.  While the simple heuristic does not predict Theorems \ref{countercircle} and \ref{counterhyp}, this more refined heuristic predicts that these theorems hold when $H$ is a large constant and $\Delta_k < C_{GS}^d$.   In Section \ref{secrigdir},  we will convert these heuristics to rigorous arguments and give alternate proofs of Theorem \ref{countercircle},  Theorem \ref{counterhyp},  and Theorem \ref{counterunit}.

We now apply Dirichlet's method with algebraic integers from $O_k$.   Instead of estimating $\sum_{m=1}^M N_{H_m}(\ZZ)$,  we will estimate

$$ \sum_{m \in O_{k, \le H} } N_{H_m} (O_k,  H) = \# \{ x, y \in O_k: \house(x),  \house(y),  \house(xy) \le H \} $$

\noindent Recall that $\psi_i: k \rightarrow \RR$ or $\CC$ are the embeddings of $k$ into $\RR$ or $\CC$ and that $\house(x) = \max_i |\psi_i(x)|$.   Also $\psi_i(xy) = \psi_i(x) \psi_i(y)$.  Therefore,  we get

$$ \sum_{m \in O_{k, \le H} } N_{H_m} (O_k,  H) = \# \{ x, y \in O_k: |\psi_i(x)|, |\psi_i(y)|,  |\psi_i(x) \psi_i(y)| \le H \textrm{ for all } i   \} $$

Recall that $\Lambda = \psi(O_k)$ is a lattice.   The right hand side of the last equation counts the number of points of $\Lambda^2$ that lie in an appropriate region $R_H$.   To write down $R_H$ in coordinates,  it is a bit simpler to consider the case that $k$ is totally real so that $\psi_i: k \rightarrow \RR$ for all $i = 1, .., d$.   Then the region $R_H$ is  defined by 

\begin{equation} \label{defRH}
R_H := \{ (x_1, ..., x_d, y_1, ... y_d) \in \RR^{2d}: |x_i|,  |y_i|,  |x_i y_i| \le H \textrm{ for each } i \},
\end{equation}

To summarize we have

\begin{equation} \label{useRH}
\sum_{m \in O_{k, \le H} } N_{H_m}(O_k, H) = \# (\Lambda^2 \cap R_H).
\end{equation}

This gives the heuristic

\begin{equation} \label{heurgn}
\sum_{m \in O_{k, \le H} } N_{H_m}(O_k, H) \heur \frac{ \Volume(R_H)} {\Cov(\Lambda^2)} \approx \frac{ \Volume(R_H)} {\Delta_k}. 
\end{equation}

The region $R_H$ is a Cartesian product: 

$$ R_H = \prod_{i=1}^d \{ (x_i, y_i) \in \RR_2: |x_i|, |y_i|, |x_i y_i| \le H \}. $$

\noindent Each factor has area $\sim H \log H$,  just as in Dirichlet's original theorem.    Therefore,  the volume of the region $R_H$ is $\approx (H \log H)^d$,  and so heuristically 

\begin{equation} \label{heurgn}
\sum_{m \in O_{k, \le H} } N_{H_m}(O_k, H) \heur \frac{H^d (\log H)^d}  {\Delta_k}. 
\end{equation}

(We remark that the sum on the left includes $m=0$,  which we would like to leave out.   The contribution from $m=0$ is $2 |O_{k, \le H}| \approx H^d / \Delta_k^{1/2}$.  So as long as $(\log H)^d$ dominates $\Delta_k$,  this contribution is negligible.)

Next we would like to replace the sum by an average.   Recall our heuristic that $|O_{k, \le H}| \heur \frac{ H^d}{\Delta_k^{1/2}}$.    Plugging in, we get the following heuristic for the average value of $N_{H_m}(O_k, H)$:

$$ \Average_{m \in O_{k, \le H} } N_{H_m}(O_k,H) \heur \frac{ (\log H)^d }{\Delta_k^{1/2}}. $$

If we fix $k$ and let $H \rightarrow \infty$,  then this is tiny compared to $|O_{k, \le H}| \sim H^d$.   However,  there is a more crafty choice: if we fix $H$ and consider fields $k$ with degree $d \rightarrow \infty$.   When we do so,  we have to be careful about the discriminant.    Here it is crucial to use number fields with $\Delta_k$ growing at most exponentially in $d$,  and these are provided by Golod-Shafarevich-Martinet (Theorem \ref{thmmart}).  If we use number fields $k$ with $\Delta_k^{1/2} < C_{GS}^d$,  then our heuristic becomes

$$ \Average_{m \in O_{k, \le H} } N_{H_m}(O_k,H) \sim \frac{ (\log H)^d }{C_{GS}^d} = \left( \frac{ \log H}{C_{GS}} \right)^d. $$

\noindent We can choose a large constant $H$ so that $\log H \gg C_{GS}$,  and then our heuristic gives a small positive exponent $\beta$ so that 

$$ \Average_{m \in O_{k, \le H} } N_{H_m}(O_k,H) \gtrsim |O_{k, \le H}|^\beta. $$

Turning this heuristic into a rigorous argument requires some tools from the geometry of numbers,  but it is not too difficult.   We give a rigorous proof in Section \ref{secrigdir}.  

We can adapt the same method to deal with the circle.   Recall that $\psi_i: k \rightarrow \RR$ or $\CC$ are the embeddings of $k$ into $\RR$ or $\CC$.   This time,  we will consider embeddings of both kinds.   We can group them into an embedding $\psi: k \rightarrow \RR^{r_1} \times \CC^{r_2}$,  where $d = r_1 + 2 r_2$.   Again $\psi(k)$ is a lattice $\Lambda$ in this space with covolume $2^{-r_2} \Delta_k^{1/2}$.   To get a counterexample to the unit distance conjecture,  we will need $r_1 \ge 1$ and $r_2 \sim d$.   By taking $r_1 \ge 1$ and fixing a real embedding $\psi_1: k \rightarrow \RR$,  we can regard $k$ as a subset of $\RR$ so that we ultimately get a set of points $P \subset \RR^2$ in the unit distance conjecture.   But we will see that the complex embeddings are crucial to getting a large number of $O_{k, \le H}$ points on an average circle $S_m$.   

When we imitate the analysis above,  the region $R_H$ becomes

\begin{equation} \label{defRHt} \tilde R_H = \{ (x,y) \in \RR^2: |x|, |y|, |x^2 + y^2| \le H \}^{r_1} \times \{ (z,w) \in \CC^2: |z|, |w|,  |z^2 + w^2| \le H \}^{r_2}. \end{equation}

As above,  we have the heuristic

\begin{equation} \label{heurgn}
\sum_{m \in O_{k, \le H} } N_{H_m}(O_k, H) \heur \frac{ \Volume(\tilde R_H)} {\Cov(\Lambda^2)} \approx \frac{ \Volume(\tilde R_H)} {\Delta_k}. 
\end{equation}

We need to estimate $\Volume(\tilde R_H)$.  Since $\tilde R_H$ is a product,  we should estimate the volume of each factor.  
The factor $ \{ (x,y) \in \RR^2: |x|, |y|, |x^2 + y^2| \le H \}$ is just a disk with area around $H$.   We turn to the other factor,  which we call $U =  \{ (z,w) \in \CC^2: |z|, |w|,  |z^2 + w^2| \le H \} $.   The key point is that $U$ has volume around $H^2 \log H$.   To see this,  we can factor $z^2 + w^2 = (z + iw) (z - i w)$.   If we define $u = z+iw$ and $v = z - iw$,  then the set $U$ is essentially the same as $\{ (u,v) \in \CC^2: |u|, |v|, |uv| \le H \}$.   If $1 \le \lambda \le H$ is a dyadic number,  let $U_{\lambda} = \{ (u,v) \in \CC^2: |u| \sim \lambda, |v| \sim H/ \lambda \}$.   Then the sets $U_\lambda$ are disjoint subsets of $U$.   Each set $U_\lambda$ has volume around $H^2$,  and the number of choices for $\lambda$ is around $\log H$.   Therefore,  the volume of $U$ is around $H^2 \log H$,  and so the volume of $\tilde R_H$ is approximately:

$$ \Volume(\tilde R_H) \sim H^d (\log H)^{r_2}. $$

This gives the heuristic

$$ \Average_{m \in O_{k, \le H} } N_{S_m}(O_k,H) \sim \frac{ (\log H)^{r_2} }{\Delta_k^{1/2}}. $$

If $r_2 \sim d$,  then this bound works as well as the bound we had in the hyperbola case.   This finishes our heuristics for $N_{S_m}(O_k, H)$.   Turning these heuristics into a rigorous argument requires some tools from the geometry of numbers,  which we discuss in Section \ref{secrigdir}.  

\subsection{Other examples}

The examples we described in Section \ref{subsecdir} are closely related to the examples in \cite{BSSZ} and \cite{Anth}, based on units.  The reason is that the Dirichlet unit theorem has a proof which is related to the arguments above, and unwinding that proof more or less leads to the method in Section \ref{subsecdir}.

The other examples are based on building a number field $k$ where a particular number $m \in \NN$ factors in a favorable way.  This argument has a different mechanism.  In this setting, $N_{S_m}(O_k, H)$ is much larger than $\Average_{m \in O_{k, \le H} } N_{S_m}(O_k, H)$.  This method is well explained in \cite{Ta} and \cite{ABG+}.  The method is also interesting and it leads to stronger bounds for the exponents in the theorems.

Let us also mention again that \cite{BSSZ} and \cite{Po1} use high degree number fields in related but different ways to give other remarkable examples in combinatorial geometry and number theory.  We don't describe these examples in detail in order to keep this survey from getting too long, and also because they are well described in the introductions of those papers.

\section{Rigorous arguments in the geometry of numbers} \label{secrigdir}

In this section,  we give a rigorous version of the heuristic argument from Subsection \ref{subsecdir}.   We use it to prove Theorems \ref{counterhyp} and \ref{countercircle}, saying that there can be surprisingly many $O_k$ of bounded house on a hyperbola or circle,  and also to prove Theorem \ref{counterunit} giving a counterexample to the unit distance conjecture.  The later sections are independent of this discussion, so you can also skip this section and read about upper bounds and classification problems.  

We recall some notation.  Let $k$ be a number field with ring of integers $O_k$.   For $m \in O_k$,  we let $S_m$ be the variety defined by $x^2 + y^2 = m$ and $H_m$ the variety defined by $x y = m$.   We are interested in the number of $O_k$-points on these varieties with house at most $H$.   (Recall that if $\psi_i: k \rightarrow \RR$ or $\CC$,  then $\house(\alpha) = \max_i |\psi_i(\alpha)|$.)  We write $O_{k, \le H}$ for the integers in $O_k$ with house at most $H$.   Then we define

$$ N_{S_m}(O_k, H) = \# ( O_{k, \le H}^2 \cap S_m),  N_{H_m}(O_k, H) = \# (O_{k, \le H}^2 \cap H_m ). $$

We will study the average size of these numbers as $m$ varies in $O_{k, \le H}$.   This question is analogous to Dirichlet's theorem about the average size of the divisor function.   

Suppose that $k$ has degree $d$ and has $r_1$ real embeddings and $r_2$ complex embeddings with $r_1 + 2 r_2 = d$.  We can conbine these to give a map $\psi: k \rightarrow \RR^{r_1} \times \CC^{r_2}$, and we let $\Lambda = \psi(O_k) \subset V  = \RR^{r_1} \times \CC^{r_2}$.  Then we can interpret $\sum_{m \in O_{k, \le H}} N_{H_m}(O_k, H)$ as the number of points of $\Lambda^2$ in a certain region $R_H$ of $V^2$.   As a heuristic,  we might guess that the number of lattice points in this region is $\heur \frac{\Volume(R_H)}{\Cov(\Lambda^2)}$.   After estimating $\Volume(R_H)$,  this leads to the following heuristics on the average behavior of $N_{H_m}$ and $N_{S_m}$:

\begin{equation} \label{heurhyp} \Average_{ m \in O_{k, \le H} } N_{H_m}(O_k, H) \heur \frac{ (\log H)^{r_1 + r_2} }{\Delta_k^{1/2}}. \end{equation}

\begin{equation} \label{heurcir} \Average_{m \in O_{k, \le H} } N_{S_m}(O_k, H) \heur \frac{ (\log H)^{r_2} }{\Delta_k^{1/2}}. \end{equation}

In this section we state and prove rigorous theorems giving lower bounds that are fairly close to these heuristics.   It is simplest to make the heuristics above rigorous in the regime where we fix $k$ and let $H \rightarrow + \infty$.  

\begin{theorem} \label{thmlargeHasymp}There are constants $c_1, c_2, \tilde c_1, \tilde c_2 > 0$ so that the following holds.  Suppose that $k$ is a number field.  Then, as $H \rightarrow + \infty$,  

\begin{equation} \label{largeHhyp} \Average_{ m \in O_{k, \le H} } N_{H_m}(O_k, H) = (1 + o(1)) c_1^{r_1} c_2^{r_2} \frac{ (\log H)^{r_1 + r_2} }{\Delta_k^{1/2}}. \end{equation}

\begin{equation} \label{largeHcir} \Average_{m \in O_{k, \le H} } N_{S_m}(O_k, H) = (1 + o(1)) \tilde c_1^{r_1} \tilde c_2^{r_2}  \frac{ (\log H)^{r_2} }{\Delta_k^{1/2}}. \end{equation}

\end{theorem}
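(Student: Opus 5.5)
The plan is to count lattice points in a region directly. Keep $k$ fixed and let $H \to \infty$. By (\ref{useRH}) and its circle analogue, it suffices to prove
$$\#(\Lambda^2 \cap R_H) = (1+o(1))\frac{\Volume(R_H)}{\Cov(\Lambda)^2}, \qquad \#(\Lambda^2 \cap \tilde R_H) = (1+o(1))\frac{\Volume(\tilde R_H)}{\Cov(\Lambda)^2}.$$
We then divide by $|O_{k,\le H}| = (1+o(1))\Volume(Q_H)/\Cov(\Lambda)$, which is the standard count of lattice points in a dilated convex body. Finally we remove the term $m=0$. Its contribution is $\sim |O_{k,\le H}|$, which is negligible against $|O_{k,\le H}|(\log H)^{r_1+r_2}$ because $r_1+r_2\ge 1$, and in the circle case when $r_2 \ge 1$. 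When $r_2=0$ the circle statement is just $O(1)/\Delta_k^{1/2}$ and is handled the same way.

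The volumes are explicit product computations, since both regions are products over the places. For a real place of $R_H$ we get $4\int_0^H \min(H,H/x)\,dx = 4H(1+\log H)$. For a complex place we use polar coordinates: $\int_{|z|\le H}\pi\min(H,H/|z|)^2\,dA = \pi^2 H^2(1+2\log H)$. For $\tilde R_H$, a real place gives a disk of area $\pi H$. A complex place, after the linear change of variables $u=z+iw$, $v=z-iw$ (constant Jacobian), is comparable to the hyperbolic region above. One checks that the constraints $|z|,|w|\le H$ only affect lower-order terms, so its volume is $c H^2\log H\,(1+o(1))$. Dividing by $\Cov(\Lambda)=2^{-r_2}\Delta_k^{1/2}$ and by $\Volume(Q_H)=(2H)^{r_1}(\pi H^2)^{r_2}$ yields the constants $c_1,c_2,\tilde c_1,\tilde c_2$.

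The main obstacle is the lattice count. $R_H$ is not a dilate of a fixed convex body: it has long thin hyperbolic spikes, so the Lipschitz/Davenport principle cannot be applied to it directly. My plan is to fiber over the first coordinate:
$$\#(\Lambda^2\cap R_H)=\sum_{x\in \Lambda\cap Q_H}\#\bigl(\Lambda\cap Y(x)\bigr),$$
where $Y(x)$ is the box (a polydisc at complex places) with side lengths $\min(H,H/|x_i|)$. Each $Y(x)$ is convex. Davenport's lemma, with constants depending only on the fixed lattice $\Lambda$, then gives
$$\#(\Lambda\cap Y(x)) = \frac{\Volume(Y(x))}{\Cov(\Lambda)} + O\Bigl(\sum \text{volumes of projections of } Y(x)\Bigr).$$
The error is a relative $O(1/\min_i s_i(x))$, where $s_i(x)$ are the side lengths. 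Fix a slowly growing cutoff $T=T(H)$ and split the $x$ according to whether every $|x_i|\le H/T$. For the good $x$ the error is $o(1)$ relative to the main term. For the bad $x$ I use only the crude upper bound $\#(\Lambda\cap Y)\lesssim_k \prod_i \max(s_i,1)$. Their total contribution is then at most an $O(\log T/\log H)$ fraction of the main term, because in each factor the logarithmic measure of the excluded range is $\log T$ out of $\log H$.

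It remains to replace $\sum_{x\in\Lambda\cap Q_H}\Volume(Y(x))$ by $\Cov(\Lambda)^{-1}\int_{Q_H}\Volume(Y(x))\,dx$. Here I decompose $Q_H$ into product shells $|x_i|\in[\lambda_i,(1+\delta)\lambda_i]$ with all $\lambda_i\ge T$. On each shell $\Volume(Y(x))$ is constant up to a factor $(1+\delta)^{O(d)}$, and Davenport again counts $\Lambda$ in the shell up to $o(1)$. The shells with some $\lambda_i<T$ are handled by the same crude bound and the same $\log T/\log H$ loss. Letting $\delta\to0$ and $T\to\infty$ slowly finishes the hyperbola case. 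For the circle case I run the same argument after the linear change to $(u,v)$ coordinates at the complex places. This change keeps each fiber convex, and at real places the fiber is simply a disk, so the identical fibering and shell argument applies.
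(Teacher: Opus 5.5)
Your hyperbola argument is sound. Because $\Lambda^2=\Lambda\times\Lambda$, fibering $R_H$ over the first factor is legitimate and the fibers $Y(x)$ are centered boxes. Davenport on the good fibers, together with crude covering bounds summed over shells in $x$, then gives $\#(\Lambda^2\cap R_H)=(1+o(1))\Volume(R_H)/\Cov(\Lambda)^2$, and your volume computations are correct. (The paper omits its own proof, calling it a straightforward application of the geometry of numbers.)

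The gap is the circle case with $r_2\ge 1$. Your fibering identity $\#(\Lambda^2\cap R)=\sum_x \#(\Lambda\cap Y(x))$ needs the lattice to split as a lattice in the base times a lattice in the fiber. The substitution $u_j=z_j+iw_j$, $v_j=z_j-iw_j$ destroys this. It is $\CC$-linear at each place but does not come from a change of variables on $O_k^2$. So in $(u,v)$-coordinates $\Lambda^2$ is not of the form $\Lambda_u\times\Lambda_v$, and its projection to the $u$-coordinates is generally not discrete. For instance, take $k=\QQ(\sqrt{-2})$, $\alpha=a+b\sqrt{-2}$, $\beta=c+e\sqrt{-2}$. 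Then $u=\psi(\alpha)+i\psi(\beta)=(a-e\sqrt2)+i(c+b\sqrt2)$, which is injective on $\ZZ^4$ with dense image in $\CC$. Every fiber therefore contains at most one lattice point, and Davenport on the fibers says nothing. The same happens whenever $\sqrt{-1}\notin k$, in particular whenever $r_1\ge 1$, which is the case relevant to unit distances. Indeed, a nonzero $(\alpha,\beta)$ with $\psi_j(\alpha)+i\psi_j(\beta)=0$ at even one complex place forces $(\alpha/\beta)^2=-1$. So the projection is injective from a rank-$2d$ group into a space of dimension less than $2d$. Only when $\sqrt{-1}\in k$ and the embeddings are normalized so that $\psi_j(\sqrt{-1})=i$ does the substitution come from $(\alpha,\beta)\mapsto(\alpha+\sqrt{-1}\beta,\alpha-\sqrt{-1}\beta)$, with fibers that are cosets of $\psi(2O_k)$.

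There are two ways to repair this. The first is to keep fibering over the original coordinate $x=\psi(\alpha)$, where the lattice does split, and accept less pleasant fibers. At a complex place the fiber $\{w:|w|\le H,\ |w^2+z_j^2|\le H\}$ is not a centered polydisc: for $|z_j|^2>H$ it is a pair of ovals around $\pm i z_j$ of diameter $\asymp H/|z_j|$. However, Davenport's lemma needs only bounded complexity, not convexity or central symmetry. So your good/bad split and shell bookkeeping go through, with the factor $\log H$ now coming from the range $\sqrt H\le |z_j|\le H$.

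The second is to dispense with fibering. After a fixed linear map taking $\Lambda^2$ to $\ZZ^{2d}$, $\tilde R_H$ (like $R_H$) is semialgebraic of complexity bounded in terms of $d$, so Davenport's lemma applies to it directly. Its projection terms are controlled by covering it with $O((\log H)^{r_2})$ suitably enlarged convex sets $K(a)$. Each is a fixed linear image of a product of disks of radii at least $1$, whose lower-dimensional projections have volume $\lesssim_k \Volume(K(a))$ divided by the smallest radius. Summing over dyadic $a$ gives an error $O_k(H^d(\log H)^{r_2-1})$, which is $o(\Volume(\tilde R_H))$. The same one-step argument handles $R_H$ as well.
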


The proof of this theorem is a straightforward application of the geometry of numbers.  Since we will not use the theorem, we omit the proof.  (I have tried to determine whether this result or something similar has appeared in the mathematical literature.  I'm not positive.  I have not been able to find such a result.  But on the other hand, the ideas involved are quite classical.)
  
   For the application to the unit distance problem,  we need to work in a different regime: where $H$ is a large constant and where the bounds depend in a controlled way on the choice of field $k$.   We work out a rigorous result that applies in this regime.  The bounds are not as sharp, but the hypotheses are more flexible.  
\begin{theorem} \label{thmlowerhyp} There is a constant $C > 0$ so that the following holds.  Suppose that $k$ is a degree $d$ number field with $r_1$ real embeddings and $r_2$ complex embeddings, then

\begin{equation} \label{goal1}
 \sum_{m \in O_{k,H}} N_{H_m}(O_k, H) \ge C^{-d} \frac{ H^d (\log H)^{r_1 + r_2} }{\Delta_k^{1.01}}. 
 \end{equation}

If also $\log H > C \Delta_k^{\frac{10}{d}}$, then

\begin{equation}  \label{goal2}  \Average_{0 \not= m \in O_{k, \le H} } N_{H_m}(O_k, H) \ge C^{-d} \frac{ (\log H)^{r_1 + r_2} }{ \Delta_k^{1.01}}.  \end{equation} 

\end{theorem}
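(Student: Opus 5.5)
We will prove the lower bound (\ref{goal1}) by packing the region $R_H$ with translates of a fundamental domain of $\Lambda^2$. We use the identity (\ref{useRH}), in its general form for $V = \RR^{r_1} \times \CC^{r_2}$:
$$\sum_{m \in O_{k,\le H}} N_{H_m}(O_k, H) = \#(\Lambda^2 \cap R_H),$$
where $R_H = \prod_i \{ (x_i,y_i) : |x_i|, |y_i|, |x_i y_i| \le H\}$, with real factors for $i \le r_1$ and complex factors for the rest.

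The key difficulty is that $R_H$ is not convex, so we cannot directly say that the lattice point count is about volume over covolume. Instead we decompose $R_H$ into product boxes. For each choice of dyadic scales $\lambda = (\lambda_1, \dots, \lambda_{r_1+r_2})$ with $1 \le \lambda_i \le H$, consider the box
$$B_\lambda = \prod_i \{ |x_i| \le \lambda_i, \ |y_i| \le H/\lambda_i \} \subset R_H.$$
Counting $\Lambda^2 \cap B_\lambda$ factors as $\#(\Lambda \cap X_\lambda)\cdot \#(\Lambda \cap Y_\lambda)$, where $X_\lambda$ and $Y_\lambda$ are convex symmetric boxes in $V$. However, distinct $\lambda$ give overlapping boxes, so we should use disjoint pieces: the shells $|x_i| \sim \lambda_i$, $|y_i| \le H/\lambda_i$. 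Then $\#(\Lambda^2 \cap R_H) \ge \sum_\lambda \#(\Lambda\cap X'_\lambda)\,\#(\Lambda\cap Y_\lambda)$, with $X'_\lambda$ the shell. There are $\gtrsim (c\log H)^{r_1+r_2}$ choices of $\lambda$.

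For each box we need a lower bound on lattice points in the convex body $Y_\lambda$, which has volume $\sim C^{\pm d} H^d/\prod \lambda_i^{\deg_i}$, where $\deg_i \in \{1,2\}$ is the local degree. We also need one for the shell $X'_\lambda$, whose volume is $\sim \prod \lambda_i^{\deg_i}$. Here Minkowski-type lower bounds are weak: Minkowski's first theorem only guarantees one nonzero point when the volume exceeds $2^d\Cov$. The standard tool is that a convex symmetric body $K$ contains at least $c^d\,\Volume(K)/\Cov(\Lambda)$ lattice points once all successive minima are $\lesssim 1$. In general, $\#(K\cap\Lambda) \ge \prod_j \max(1, c/\lambda_j(K))$. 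Since $\prod_j \lambda_j(K) \le 2^d \Cov/\Volume(K)$, this gives $\#(K\cap \Lambda) \ge c^d \Volume(K)/\Cov(\Lambda)$ whenever no minimum exceeds $1$.

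The main obstacle is controlling the largest successive minimum. For the lattice $\Lambda = \psi(O_k)$ and a box $K$, we would try to bound the minima from below using the product formula: any nonzero $\alpha \in O_k$ has $\prod_i |\psi_i(\alpha)| \ge 1$. This prevents any minimum from being very small, and hence, since their product is at most $2^d\Cov/\Volume$, prevents the largest from being large. Concretely, the first minimum satisfies $\lambda_1(K) \ge (\Volume K)^{-1/d}$ up to $C^d$. Combining this with $\prod_j \lambda_j(K) \lesssim C^d\Delta_k^{1/2}/\Volume(K)$ gives $\lambda_d(K) \lesssim C^d \Delta_k^{1/2}$. This polynomial-in-$\Delta_k$ loss is the source of the $\Delta_k^{1.01}$ in the statement, rather than $\Delta_k^{1/2}$. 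To get the clean exponent we would instead restrict to boxes that are large enough. For shells, we pass to the count in the full box minus the count in the smaller box, and handle the factor $2^{-d}$ by only keeping shells whose inner part is negligible. If these estimates are too lossy, we would only sum over the $\lambda$ with every factor of volume at least $C^d\Delta_k$. Since $H$ is fixed, this still leaves $\gtrsim (c\log H)^{r_1+r_2}$ scales provided $\log H$ is large; otherwise the bound is trivial after adjusting $C$. Multiplying the two counts, each $\gtrsim C^{-d}\Delta_k^{-1/2 - 0.005}\cdot$volume, and summing over $\lambda$ gives (\ref{goal1}).

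For (\ref{goal2}), we subtract the $m=0$ terms, which number at most $2|O_{k,\le H}|$, and divide by $|O_{k,\le H}|$. The upper bound $|O_{k,\le H}| \le C^d H^d$ follows from a packing argument: the lattice points are $\gtrsim$-separated by the product formula, so balls of radius $c$ around them are disjoint. The condition $\log H > C\Delta_k^{10/d}$ ensures $(\log H)^{r_1+r_2}/\Delta_k^{1.01}$ dominates the constant contribution $C^d$ coming from $m=0$. Here we use $r_1+r_2 \ge d/2$, so that $(\log H)^{r_1+r_2} \ge C^{d/2}\Delta_k^{5}$. This gives the average bound.
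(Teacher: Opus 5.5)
There is a genuine gap at the step where you bound $\#(\Lambda\cap X'_\lambda)$ from below. The shell $X'_\lambda=\{x\in V:\ |x_i|\sim\lambda_i \text{ for all } i\}$ is a product of annuli (pairs of intervals at the real places), so it is not convex. Neither Lemma \ref{lemlower} nor the successive-minima bound $\prod_j\max(1,c/\lambda_j(K))$ applies to it; this is exactly the obstruction noted in the remark after Lemma \ref{lemlower}. None of your repairs closes the gap:
\begin{itemize}
\item The sets $X_\lambda\setminus\frac12 X_\lambda$ are not the product of annuli. They are also not disjoint as $\lambda$ varies: a point with $|x_1|\sim\lambda_1$ and $|x_2|\le 1$ lies in $X_\lambda\setminus\frac12X_\lambda$ for every $\lambda_2$. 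So using them brings back the overlap you were trying to avoid.
\item The genuine product of annuli fills only a fraction $2^{-r_1}(3/4)^{r_2}$ of $X_\lambda$, so its complement is far from negligible. Whenever $\Lambda$ is equidistributed at the scale of $X_\lambda$, the complement really contains about $(1-2^{-r_1}(3/4)^{r_2})\Volume(X_\lambda)/\Cov(\Lambda)$ points. For $d\ge 3$ this already exceeds the Blichfeldt lower bound $2^{-d}\Volume(X_\lambda)/\Cov(\Lambda)$ for the whole box, so subtracting any upper bound from that lower bound leaves nothing.
\item Forcing lattice points into a non-convex shell would require the covering radius of $\Lambda$ relative to $X_\lambda$ to be below a small constant, i.e.\ control of the \emph{largest} successive minimum. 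Your own estimate gives only $\lambda_d\lesssim C^d\Delta_k^{1/2}$, which is exponentially large in the relevant regime ($H$ fixed, $d\to\infty$). Restricting to $\lambda$ of large volume does not change this.
\end{itemize}
So the per-$\lambda$ product lower bound, and with it the sum bound, is unproved. Incidentally, symmetric convex bodies need no hypothesis on successive minima: Lemma \ref{lemlower} gives $2^{-d}\Volume(K)/\Cov(\Lambda)$ unconditionally. The \emph{main obstacle} you identify is therefore not where the difficulty lies; the difficulty is that disjoint pieces cannot be symmetric convex.

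The missing idea, which is how the paper proceeds, is never to lower-bound lattice points in a non-symmetric set. Keep the overlapping symmetric convex boxes $K(a)=\{|x_i|\le a_i,\ |y_i|\le H/a_i\}\subset V^2$. Lemma \ref{lemlower} gives $\#(\Lambda^2\cap K(a))\ge C^{-d}H^d/\Delta_k$ for each of the $\sim(\log H)^{r}$ dyadic $a$, where $r=r_1+r_2$. Summing over $a$ counts each point of a dyadic piece $\tilde K(a,b)$ with multiplicity $\mu(a,b)\sim\prod_i\log(H/(a_ib_i))$, and this overcount must be controlled from above:
\begin{itemize}
\item Write $b_i=2^{-k_i}H/a_i$. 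Then AM--GM gives $\Volume(\tilde K(a,b))\le C^d2^{-r\mu^{1/r}}H^d$.
\item Lemma \ref{lemupper}, using the minimal length bound of Lemma \ref{lemshortvec}, bounds the lattice points of each piece by $C^d$ times its volume.
\item Only $C^r(\log H)^r\tilde\mu^r(\log\tilde\mu)^r$ pairs $(a,b)$ have $\mu\le\tilde\mu^r$.
\end{itemize}
Hence pieces with $\mu\ge (C\Delta_k^{1/(100d)})^r$ contribute negligibly, and the remaining points are counted at most $C^d\Delta_k^{0.01}$ times. This truncation, not a successive-minima loss, is the source of $\Delta_k^{1.01}$.

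Your passage from the sum bound to the average bound matches the paper: discard $m=0$, use $|O_{k,\le H}|\le C^dH^d$ and $r_1+r_2\ge d/2$. One correction there: the packing must use the separation $\ge\sqrt d/2$ from Lemma \ref{lemshortvec}, with cubes of side $1/10$ or balls of radius $\sim\sqrt d$. Balls of constant radius lose a factor of order $d^{d/2}$.
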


There is a similar theorem with $S_m$ in place of $H_m$:

\begin{theorem} \label{thmlowercir} There is a constant $C > 0$ so that the following holds.  Suppose that $k$ is a degree $d$ number field with $r_1$ real embeddings and $r_2$ complex embeddings and with $r_2 \ge d/10$, then

\begin{equation} \label{goal1}
 \sum_{m \in O_{k,H}} N_{S_m}(O_k, H) \ge C^{-d} \frac{ H^d (\log H)^{r_2} }{\Delta_k^{1.01}}. 
 \end{equation}

If also $\log H > C \Delta_k^{\frac{20}{d}}$, then

\begin{equation}  \label{goal2}  \Average_{0 \not= m \in O_{k, \le H} } N_{S_m}(O_k, H) \ge C^{-d} \frac{ (\log H)^{r_2} }{ \Delta_k^{1.01}}.  \end{equation} 

\end{theorem}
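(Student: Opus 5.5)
The plan is to convert the sum into a lattice-point count and bound it from below by summing van der Corput-type lower bounds over many convex pieces. As in Subsection \ref{subsecdir},
$$\sum_{m \in O_{k,\le H}} N_{S_m}(O_k,H) = \#(\Lambda^2 \cap \tilde R_H),$$
where $\tilde R_H$ is the region from (\ref{defRHt}) and $\Lambda^2 \subset V^2$ has covolume $\sim 2^{-2r_2}\Delta_k$.

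\emph{Building the convex pieces.} The region $\tilde R_H$ is not convex, so I will fill it with convex symmetric pieces. At each complex place $j$ I use coordinates $u_j = z_j + i w_j$ and $v_j = z_j - i w_j$. For a vector $\lambda = (\lambda_j)_{j \le r_2}$ with each $\lambda_j \in \{A^t : 0 \le t \le \log_A H\}$, for a large absolute constant $A$, define
$$K_\lambda = \{|x|,|y| \le \sqrt{H}/2\}^{r_1} \times \prod_{j} \{ |u_j| \le \lambda_j/2,\ |v_j| \le H/(2\lambda_j)\}.$$
Each factor is the image of a product of disks under a fixed linear map, so $K_\lambda$ is convex and symmetric. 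It lies in $\tilde R_H$ because $|z|,|w| \le (|u|+|v|)/2 \le H$ and $|z^2+w^2| = |uv| \le H/4$. Its volume is at least $c^d H^d$. By van der Corput's generalization of Minkowski's theorem, $\#(\Lambda^2 \cap K_\lambda) \ge 2^{-2d}\Volume(K_\lambda)/\Cov(\Lambda^2) \ge C^{-d} H^d/\Delta_k$. There are $\gtrsim (\log H/\log A)^{r_2}$ choices of $\lambda$, so the total over all pieces is $\ge C^{-d}H^d(\log H)^{r_2}/\Delta_k$.

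\emph{Controlling overlaps (main obstacle).} The pieces overlap. A lattice point with $|u_jv_j|$ much smaller than $H$ at many places lies in up to $\sim(\log H)^{r_2}$ of the $K_\lambda$, so the raw sum does not bound the number of distinct points. My plan is to count, for each $\lambda$, only points lying in the shell $S_\lambda$ where $|u_j| > \lambda_j/(2A)$ for every $j$. The shells are disjoint for distinct $\lambda$.

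To bound the discarded points, note that $K_\lambda \setminus S_\lambda$ is contained in the union over $j$ of $K_\lambda$ with the $j$-th $u$-constraint shrunk by the factor $A$. For such shrunken convex bodies I need an \emph{upper} bound on lattice points. Here I use separation: every nonzero $\alpha \in O_k$ has norm at least $1$, hence $|\psi_i(\alpha)| \ge 1$ for some $i$, so $\Lambda$ (and hence $\Lambda^2$) is $1$-separated in the sup norm. Packing disjoint unit cubes then gives
$$\#(\Lambda^2 \cap K) \le \frac{\Volume(K + [-1/2,1/2]^{2d})}{\Volume([-1/2,1/2]^{2d})}.$$
Compared with the heuristic $\Volume(K)/\Cov(\Lambda^2)$, this bound loses at most a factor $C^d\Delta_k$. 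The difficulty is that this loss must be beaten by the factor $A^{-1}$ gained from the shrinking, and I cannot simply take $A^{-1} \ll \Delta_k^{-1}$ because $A$ must stay a constant to keep the piece count at $(\log H)^{r_2}$. I expect to resolve this by balancing the upper and lower bounds differently: either apply the lower bound only to slightly enlarged pieces, or pass to a sublattice or a Hölder-type interpolation that yields the exponent $\Delta_k^{1.01}$ instead of $\Delta_k$. The $C^{-d}$ slack and the extra $\Delta_k^{0.01}$ in the statement are exactly the room I would budget for this step. This is the step I expect to need the most care. If the shell argument fails, my fallback is to restrict to points where $|u_jv_j| \ge H/A^2$ at every place, which also forces unique membership in the dyadic pieces.

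\emph{Passing to the average.} From the separation estimate, $|O_{k,\le H}| \le C^d H^d$. The $m=0$ terms contribute $N_{S_0}(O_k,H) \le 2|O_{k,\le H}| \le C^dH^d$, since for each $x$ there are at most two $y$ with $y^2 = -x^2$. Removing them and dividing by $|O_{k,\le H}|$ gives (\ref{goal2}), provided
$$C^{-d}H^d(\log H)^{r_2}\Delta_k^{-1.01} \ge 2C^dH^d.$$
This holds when $(\log H)^{r_2} \ge C^{2d}\Delta_k^{1.01}$. Since $r_2 \ge d/10$, that condition follows from $\log H > C\Delta_k^{20/d}$ after enlarging $C$, which is exactly the hypothesis in the statement.
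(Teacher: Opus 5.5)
Your first step (the convex symmetric pieces $K_\lambda$ in the coordinates $u_j=z_j+iw_j$, $v_j=z_j-iw_j$, bounded below by van der Corput) matches the paper. So does your last step (discarding $m=0$ and averaging, using $r_2\ge d/10$).

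\textbf{The gap is the overlap step.} You leave it open, and the strategies you name cannot close it. Both the shells $S_\lambda$ and the fallback discard lattice points that violate a constraint at a single place. For the shells, the discarded region is smaller than $K_\lambda$ only by a fixed power of $A$. For the fallback it is worse: the set where $|u_jv_j|<H/A^2$ at \emph{some} place is most of $K_\lambda$ once $r_2$ is large compared with a power of $A$. Your only upper bound for discarded points is the separation/covering bound, and it exceeds the van der Corput lower bound by a factor $C^d\Delta_k$. With $A$ a fixed constant, no subtraction of this kind can work. The remarks about enlarged pieces, sublattices, or H\"older interpolation do not supply an argument.

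\textbf{The missing idea is to tolerate multiplicity rather than enforce disjointness,} and to show that high multiplicity occurs only on exponentially small volume. The paper does this as follows.
\begin{enumerate}
\item Decompose $\tilde R_H$ into dyadic cells $Q$ on which $|u_j|\sim a_j$ and $|v_j|\sim b_j$, with $a_jb_j=2^{-k_j}H$ at each complex place. Merge $|u_j|\le 1$ and $|v_j|\le 1$ so that no cell is thin.
\item A point of $Q$ lies in $\mu(Q)\lesssim\prod_j\max(1,k_j)$ of the pieces. Hence $\sum_\lambda\#(\Lambda^2\cap K_\lambda)\lesssim\sum_Q\mu(Q)\,\#(\Lambda^2\cap Q)$.
\item If $\mu(Q)\ge\tilde\mu^{r_2}$, AM--GM gives $\sum_jk_j\ge r_2(\tilde\mu-1)$. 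Therefore $\Volume(Q)\le C^dH^d4^{-r_2\tilde\mu}$. This saving is exponential in $r_2\ge d/10$, and that is what beats the $C^d\Delta_k$ loss of the covering bound.
\item The number of cells at a given level is at most $C^{r_2}(\log H)^{r_2}\tilde\mu^{r_2}(\log\tilde\mu)^{r_2}$. Combined with step 3, cells with $\tilde\mu\ge C\Delta_k^{1/(100d)}$ contribute negligibly compared with $C^{-d}H^d(\log H)^{r_2}/\Delta_k$.
\item The remaining cells are counted with multiplicity at most $C^{r_2}\Delta_k^{0.01}$. This gives $\#(\Lambda^2\cap\tilde R_H)\ge C^{-d}H^d(\log H)^{r_2}/\Delta_k^{1.01}$.
\end{enumerate}
So the extra $\Delta_k^{0.01}$ is the price of an allowed multiplicity, not the cost of achieving disjointness.
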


Comparing these theorems to the heuristics above,  we see that the power of $\Delta_k$ in the theorems is worse than in the heuristic.   In the regime where $\Delta_k < C^d$,  this difference is not that significant,  and the lower bounds in the theorems match the heuristics up to factors of the form 
$C^{-d}$.  

Theorem \ref{thmlowerhyp} and \ref{thmlowercir} require a little more care than Theorem \ref{thmlargeHasymp}, but they still follow from classical methods in the geometry of numbers.  

We recall some standard tools from the geometry of numbers that we will use to prove these theorems, beginning with a lower bound on the number of lattice points in a symmetric convex set.  Recall that a convex set $K$ is called symmetric if, for every $x \in K$, $-x \in K$.  

\begin{lemma} \label{lemlower} (Blichfeldt and van der Corput) If $V$ is a $d$-dimensional Euclidean space and $K \subset V$ is a  symmetric convex set and $\Lambda \subset V$ is a lattice  then

$$ \# ( \Lambda \cap K) \ge 2^{-d} \frac{\Volume(K)}{\Cov(\Lambda)}. $$

\end{lemma}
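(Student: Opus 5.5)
We will prove the lemma via Blichfeldt's principle: if $S \subset V$ is measurable and $\Volume(S) > N \cdot \Cov(\Lambda)$, then there is a point $v \in V$ such that the translate $S + v$ contains at least $N+1$ lattice points. Equivalently, some $N+1$ points of $S$ have pairwise differences in $\Lambda$. To prove this, let $F$ be a fundamental domain for $\Lambda$. Fold $S$ into $F$ by writing $S = \bigsqcup_{\lambda \in \Lambda} (S \cap (F+\lambda))$ and translating each piece back into $F$. Consider the function $f(x) = \sum_{\lambda} 1_S(x+\lambda)$ on $F$. Its integral is $\int_F f = \Volume(S) > N \Cov(\Lambda) = N\,\Volume(F)$, so there is some $x \in F$ with $f(x) \ge N+1$. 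That gives points $x + \lambda_0, \dots, x+\lambda_N \in S$ with distinct $\lambda_j$, and hence pairwise differences in $\Lambda$.

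Next we apply this with $S = \frac12 K$. Set $N = \lceil 2^{-d}\Volume(K)/\Cov(\Lambda)\rceil - 1$, so that $\Volume(\tfrac12 K) = 2^{-d}\Volume(K) > N \Cov(\Lambda)$, at least when $2^{-d}\Volume(K)/\Cov(\Lambda)$ is not an integer. Blichfeldt then gives points $s_0, \dots, s_N \in \tfrac12 K$ with all differences $s_j - s_0 \in \Lambda$. Each difference $s_j - s_0 = \tfrac12 (2s_j) + \tfrac12(-2s_0)$ lies in $K$, because $2s_j \in K$, $-2s_0 \in K$ by symmetry, and $K$ is convex. So the $N+1$ distinct lattice points $s_j - s_0$, $j=0,\dots,N$, lie in $\Lambda \cap K$, including $0$. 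This gives $\#(\Lambda \cap K) \ge N+1 \ge 2^{-d}\Volume(K)/\Cov(\Lambda)$.

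The main (minor) obstacle is the boundary case where $2^{-d}\Volume(K)/\Cov(\Lambda)$ is exactly an integer, where the strict inequality in Blichfeldt fails. We handle it by a limiting argument. Apply the strict version to $(1+\eps)K$ to get at least $2^{-d}\Volume(K)/\Cov(\Lambda)$ lattice points in $(1+\eps)K$ for every $\eps>0$. Since $\Lambda$ is discrete and $K$ is bounded (if $K$ is unbounded with infinite volume, the same argument applied to bounded convex symmetric subsets gives infinitely many points), only finitely many lattice points lie in $2K$. So some fixed set of that many points lies in $(1+\eps)K$ for a sequence $\eps \to 0$, and hence in $\bigcap_{\eps} (1+\eps)K = \overline{K}$. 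We take $K$ closed here; for non-closed $K$ we instead apply the strict inequality to $(1-\eps)K$ and use the fact that the count is an integer. One can also simply note that for the applications only the bound up to a factor $C^{-d}$ matters, so the boundary case can be avoided by replacing $2^{-d}$ with $2^{-d}(1-\eps)$.
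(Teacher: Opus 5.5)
Your proof is correct and is essentially the paper's argument: a pigeonhole/averaging step over $V/\Lambda$ applied to $\tfrac12 K$ (your Blichfeldt folding), then the observation that differences of points of $\tfrac12 K$ lie in $K$ by symmetry and convexity. The boundary-case limiting argument is unnecessary: with $A=2^{-d}\Volume(K)/\Cov(\Lambda)$ and $N=\lceil A\rceil-1$, you always have $N<A$, so the strict inequality $\Volume(\tfrac12 K)>N\Cov(\Lambda)$ holds even when $A$ is an integer (and the averaging argument also gives the paper's non-strict version directly).
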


\begin{proof} Let $(1/2) K$ be the dilate of $K$ by a factor $(1/2)$.   Consider the standard quotient map $(1/2) K \rightarrow V / L$.   By the pigeonhole principle,  we can choose $y \in V / \Lambda$ so that 

$$ \# \{ x \in (1/2) K: x + \Lambda = y \} \ge \frac{ \Volume( (1/2) K) }{\Cov(\Lambda)} = 2^{-d} \frac{\Volume(K)}{\Cov(\Lambda)}. $$

Let $\{ x_1, ..., x_N \}$ be the set on the left hand side.   Since $K$ is a symmetric convex set,  $x_1 - x_n \in \Lambda \cap K$ for each $n = 1... N$.  To see this, note that $x_1, x_n \in (1/2) K$.  Since $K$ is symmetric, $-x_n$ is in $(1/2) K$.  By convexity, $(1/2)( x_1 - x_n) \in (1/2) K$, and so $x_1 - x_n \in K$.  

\end{proof}

Remark: This lemma may not hold if $K$ is a translation of a symmetric convex set, such as a rectangle whose center is not at zero.

We also note that this lemma applies to both $\Lambda \subset V$ and $\Lambda^2 \subset V^2$.   

Next we will prove upper bounds for the number of points from $\psi(O_k)$ in regions.  To prepare, we begin with a lemma about the shortest non-zero vector in $\psi(O_k)$:

\begin{lemma} \label{lemshortvec} If $k$ is a degree $d$ number field and $0 \not= \alpha \in O_k$,  then $|\psi(\alpha)| \ge (1/2) \sqrt{d}$.
\end{lemma}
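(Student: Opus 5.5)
Since $\psi(\alpha)$ lives in $V = \RR^{r_1} \times \CC^{r_2}$, viewed as $d$-dimensional Euclidean space, I will compute $|\psi(\alpha)|^2$ coordinate by coordinate and bound it below by AM--GM combined with the fact that the norm of a nonzero algebraic integer is a nonzero rational integer.

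First I fix the normalization. The identification $\CC \cong \RR^2$ is the standard one, so for a complex embedding $\psi_j$ the corresponding block contributes $|\psi_j(\alpha)|^2$, counted once, to $|\psi(\alpha)|^2$. This gives
$$ |\psi(\alpha)|^2 = \sum_{i=1}^{r_1} |\psi_i(\alpha)|^2 + \sum_{j=1}^{r_2} |\psi_j(\alpha)|^2. $$
It is cleaner to work with all $d$ embeddings, complex ones counted with their conjugates. Each complex pair then contributes $2|\psi_j(\alpha)|^2$ to $\sum_{\sigma} |\sigma(\alpha)|^2$, so $|\psi(\alpha)|^2 \ge \frac12 \sum_{\sigma} |\sigma(\alpha)|^2$, where $\sigma$ runs over all $d$ embeddings $k \to \CC$.

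Next I apply AM--GM to the $d$ numbers $|\sigma(\alpha)|^2$:
$$ \frac1d \sum_\sigma |\sigma(\alpha)|^2 \ge \Big( \prod_\sigma |\sigma(\alpha)| \Big)^{2/d} = |N_{k/\QQ}(\alpha)|^{2/d}. $$
Since $\alpha \in O_k$ is nonzero, $N_{k/\QQ}(\alpha)$ is a nonzero integer, so $|N_{k/\QQ}(\alpha)| \ge 1$. Therefore $\sum_\sigma |\sigma(\alpha)|^2 \ge d$, which gives $|\psi(\alpha)|^2 \ge d/2$ and
$$ |\psi(\alpha)| \ge \sqrt{d/2} \ge \tfrac12 \sqrt d. $$

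There is no real obstacle here. The only step needing care is the normalization of the complex coordinates. If $\CC$ were instead identified with $\RR^2$ via a scaled norm, for example so that the lattice covolume formula $2^{-r_2}\Delta_k^{1/2}$ holds, the factor could change slightly. Even in the worst case of the complex coordinates counted with weight $1/2$ rather than $1$, the bound becomes $|\psi(\alpha)|^2 \ge d/4$, which is exactly the claimed $(1/2)\sqrt d$. So the stated constant $1/2$ leaves room for any reasonable normalization, and I would simply note which one is being used.
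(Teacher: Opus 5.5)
Your proof is correct and follows essentially the same route as the paper. Both arguments use that $N_{k/\QQ}(\alpha)$ is a nonzero rational integer and then apply AM--GM to the $d$ numbers $|\sigma(\alpha)|^2$, which gives $\sum_{i \le r_1} |\psi_i(\alpha)|^2 + 2\sum_{j} |\psi_j(\alpha)|^2 \ge d$ (sum over complex places) and hence $|\psi(\alpha)| \ge \sqrt{d/2} \ge \frac12 \sqrt d$. One small point: the standard identification $\CC \cong \RR^2$ is already the one giving $\Cov(\Lambda) = 2^{-r_2} \Delta_k^{1/2}$, so your normalization hedge is unnecessary, though it does no harm.
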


\begin{proof}  Recall that the norm of $\alpha$ is the product of all conjugates of $\alpha$.  Suppose that $\psi_i: k \rightarrow \RR$ for $i = 1, ..., r_1$ and $\psi_i: k \rightarrow \CC$ for $i = r_1 + 1, ..., r_2$. 

$$ N(\alpha) = \prod_{i=1}^{r_1} \psi_i (\alpha) \prod_{i=r_1+1}^{r_2}  \psi_i(\alpha)  \prod_{i=r_1+1}^{r_2}  \overline{ \psi_i(\alpha) } . $$

The norm of $\alpha$ is a non-zero integer,  and so 

$$ 1 \le = \prod_{i=1}^{r_1} |\psi_i (\alpha)| \prod_{i=r_1+1}^{r_2} | \psi_i(\alpha) | \prod_{i=r_1+1}^{r_2}  | \psi_i(\alpha)| . $$

This product has $r_1 + 2 r_2 = d$ factors.  By the arithmetic-geometric mean inequality,  we get

$$ d \le \sum_{i=1}^{r_1} |\psi_i(\alpha)|^2 + 2 \sum_{i=r_1+1}^{r_2} |\psi_i(\alpha)|^2. $$

Therefore $|\psi(\alpha)|  \ge (1/2) \sqrt{d}$.
\end{proof}

Next we give an upper bound for the number of lattice points in regions.   Let $\Cover_s(U)$ be the minimal number of cubes of side length $s$ needed to cover $U$.  

\begin{lemma} \label{lemupper} If $V$ is a $d$-dimensional Euclidean space and $\Lambda \subset V$ is a lattice where $|v| \ge (1/10) \sqrt d$ for each non-zero $v \in \Lambda$, and if $U \subset V$,  then $ \# (\Lambda \cap U) \le \Cover_{1/10}(U)$. 

\end{lemma}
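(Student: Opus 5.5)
The plan is to show that each cube of side length $1/10$ contains at most one point of $\Lambda$, and then sum over a minimal cover. Let $Q_1, \dots, Q_N$ be cubes of side length $1/10$ with $N = \Cover_{1/10}(U)$ and $U \subset \bigcup_j Q_j$. Then
$$ \#(\Lambda \cap U) \le \sum_{j=1}^N \#(\Lambda \cap Q_j), $$
so it suffices to prove $\#(\Lambda \cap Q) \le 1$ for every cube $Q$ of side length $1/10$ in $V$.

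For that I use the short vector hypothesis. Suppose $v, w \in \Lambda \cap Q$ with $v \ne w$. Then $v - w$ is a non-zero vector of $\Lambda$, so $|v - w| \ge (1/10)\sqrt d$ by assumption. On the other hand, $v$ and $w$ lie in a common cube of side length $1/10$ in the $d$-dimensional Euclidean space $V$. Each of their $d$ coordinates, taken in an orthonormal basis adapted to the cube, differs by at most $1/10$, so
$$ |v - w| \le \frac{1}{10}\sqrt{d}, $$
which is the diameter of the cube.

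The one delicate point is that these two bounds meet exactly, at the diagonal of the cube. Equality in the upper bound forces $v$ and $w$ to be opposite corners of a closed cube. To exclude this case I take the covering cubes to be half-open, of the form $\prod_i [a_i, a_i + 1/10)$. This changes $\Cover_{1/10}$ at most harmlessly: a covering by closed cubes can be replaced by slightly enlarged half-open ones, or we can simply use half-open cubes in the definition. Each coordinate difference is then strictly less than $1/10$, so $|v - w| < (1/10)\sqrt d$, which contradicts the lower bound. Hence each cube contains at most one lattice point, and the lemma follows.

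When the lemma is applied to $\psi(O_k)$, Lemma \ref{lemshortvec} gives the much stronger lower bound $(1/2)\sqrt d$ on non-zero vectors. So in that application there is a large margin and the boundary issue above does not arise at all.
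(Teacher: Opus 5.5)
Your proof is correct and is the paper's argument: a cube of side $1/10$ contains at most one point of $\Lambda$, because the difference of two such points would be a nonzero lattice vector of length less than $(1/10)\sqrt d$, and you then sum over a minimal cover. Your handling of the boundary case is more careful than the paper, which asserts the strict inequality without comment even though it fails for opposite corners of a closed cube (e.g.\ $\Lambda=\frac{1}{10}\ZZ$, $U=[0,1/10]$ in $d=1$), so using half-open cubes, or the margin $(1/2)\sqrt d$ from Lemma~\ref{lemshortvec}, is the right fix; your alternative of ``slightly enlarged'' cubes would not work, since those have side larger than $1/10$.
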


\begin{proof} If $Q$ is a cube of side length $1/10$, and $v_1, v_2 \in \Lambda \cap Q$, then $v_1 - v_2$ is a non-zero vector $v \in \Lambda$ with $|v| < (1/10) \sqrt{d}$, contradiction.  Therefore, $\# (\Lambda \cap Q) \le 1$.

Now if $U \subset \cup_i Q_i$ and each $Q_i$ is a cube of side length $1/10$, then $\# (\Lambda \cap U)$ is bounded by the number of cubes.

\end{proof}

In particular, combining Lemmas \ref{lemlower} and \ref{lemupper}, we get basic upper and lower bounds for $|O_{k, \le H}|$.  

\begin{lemma} \label{lemsizeOkH}
If $H \ge 1$, then

\begin{equation} \label{sizeOkH}
2^{-d} \frac{H^d}{\Delta_k^{1/2}} \le |O_{k, \le H} | \le C_1^d H^d. 
\end{equation}

\end{lemma}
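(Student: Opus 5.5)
The plan is to get the lower bound from Lemma \ref{lemlower} and the upper bound from Lemmas \ref{lemshortvec} and \ref{lemupper}, applied to $\Lambda = \psi(O_k)$ and the ``cube'' $Q_H = B_{\RR,H}^{r_1} \times B_{\CC,H}^{r_2} \subset V$. By definition of the house, $O_{k,\le H}$ is in bijection with $\Lambda \cap Q_H$.

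\textbf{Lower bound.} The set $Q_H$ is a product of intervals and disks, so it is symmetric and convex. Its volume is $(2H)^{r_1}(\pi H^2)^{r_2} = 2^{r_1}\pi^{r_2} H^d$. We also have $\Cov(\Lambda) = 2^{-r_2}\Delta_k^{1/2}$. Lemma \ref{lemlower} then gives
$$ |O_{k,\le H}| \ge 2^{-d}\, \frac{2^{r_1} \pi^{r_2} 2^{r_2} H^d}{\Delta_k^{1/2}} \ge 2^{-d} \frac{H^d}{\Delta_k^{1/2}}, $$
since the numerical factor $2^{r_1}(2\pi)^{r_2}$ is at least $1$. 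One caveat: if we identify $\CC$ with $\RR^2$ isometrically rather than with the trace-form normalization, the covolume convention might shift by a factor $2^{\pm r_2}$. That is harmless, because the extra factor $\pi^{r_2}$ absorbs it.

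\textbf{Upper bound.} Lemma \ref{lemshortvec} says every nonzero $v \in \Lambda$ has $|v| \ge \frac12 \sqrt d \ge \frac1{10}\sqrt d$, so Lemma \ref{lemupper} applies and gives $|O_{k,\le H}| \le \Cover_{1/10}(Q_H)$. We then bound the covering number coordinatewise.
\begin{itemize}
\item Each real factor $[-H,H]$ is covered by at most $20H+1 \le 21H$ intervals of length $1/10$, using $H \ge 1$.
\item Each complex factor, a disk of radius $H$, sits inside a square of side $2H$, which is covered by at most $(21H)^2$ squares of side $1/10$.
\end{itemize}
Products of these intervals and squares are cubes of side $1/10$ in $V$, so
$$ \Cover_{1/10}(Q_H) \le (21H)^{r_1 + 2r_2} = 21^d H^d. $$
This gives the upper bound with $C_1 = 21$.

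There is no real obstacle here. The only point that needs care is the normalization of the Euclidean structure on the complex coordinates. It must be consistent between the covolume formula, the short-vector bound, and the covering. Any inconsistency changes only constants of the form $c^d$, which the statement allows.
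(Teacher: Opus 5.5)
Your proposal is correct and follows the route the paper indicates. The paper only says the bounds come from ``combining Lemmas~\ref{lemlower} and~\ref{lemupper}'': the lower bound from Lemma~\ref{lemlower} applied to the symmetric convex set $Q_H$ with $\Cov(\Lambda)=2^{-r_2}\Delta_k^{1/2}$, and the upper bound from Lemma~\ref{lemupper}, whose hypothesis Lemma~\ref{lemshortvec} supplies. Your volume computation and coordinatewise covering, which give $C_1=21$, fill in that sketch correctly.
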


Now we begin the proof of Theorem \ref{thmlowerhyp}. 

\begin{proof}
Let $\psi_i: k \rightarrow \RR$ or $\CC$ be the embeddings of $k$.  Here $\psi_i: k \rightarrow \RR$ if $1 \le i \le r_1$ and $\psi_i: k \rightarrow \CC$ if $r_1 + 1 \le i \le r_1 + r_2$.  We abbreviate $r = r_1 + r_2$, so $i$ goes from 1 to $r$.  We let $V = \RR^{r_1} \times \CC^{r_2}$.  We let $x = (x_1, ..., x_r) \in V$, where $x_i \in \RR$ if $1 \le i \le r_1$ and $x_i \in \CC$ if $r_1 + 1 \le i \le r_1 + r_2 = r$.  We let $\Lambda = \psi(O_k) \subset V$ and recall that $\Cov(\Lambda) = 2^{-r_2} \Delta_k^{1/2}$.

We note that $\sum_{m \in O_{k, \le H}} N_{H_m}(O_k, H)$ is the number of $\Lambda^2$ points in the region $R_H \subset V^2$ defined by

\begin{equation} \label{defRHrig}
R_H = \{ (x,y) \in V^2: |x_i|, |y_i|, |x_i y_i| \le H \}. 
\end{equation}

We want to lower bound $\# (\Lambda^2 \cap R_H)$.  We cannot directly apply Lemma \ref{lemlower} because $R_H$ is not convex, so we first locate some convex sets inside $R_H$.  Suppose that $a = (a_1, ..., a_r)$, with $1 \le a_i \le H$.  We define $K(a)$ by

$$ K(a) = \{ (x,y) \in V \times V: |x_i| \le a_i, |y_i| \le \frac{H}{a_i} \textrm{ for all } i \}. $$

Note that $K(a) \subset R_H$.  Since $K(a)$ is a symmetric convex set, Lemma \ref{lemlower} gives

\begin{equation} \label{lowerKa}
\# (\Lambda^2 \cap K(a) ) \ge 2^{-2d} \frac{ \Volume (K(a))}{\Cov(\Lambda^2)} \ge C^{-d} \frac{ H^d}{\Delta_k} .
\end{equation}

We consider $a = (a_1, ..., a_r)$ with $1 \le a_i \le H$ and $a_i$ dyadic for each $i $.  The number of choices for such $a$ is $\sim (\log H)^r$.  

Unfortunately, the sets $K(a)$ are not disjoint.  If these sets $K(a)$ were disjoint, then \eqref{lowerKa} would imply the strong bound $\# (\Lambda^2 \cap R_H) \ge C^{-d} \frac{ H^d (\log H)^r}{\Delta_k}$ which is stronger than \eqref{goal1}.  We could choose rectangles $\Rect(a) \subset K(a)$ so that the rectangles $\Rect(a)$ are disjoint and $\Volume(\Rect(a)) \ge C^{-d} \Volume (K(a))$.  However, the rectangles $\Rect(a)$ would not be centered at the origin, and so  the lower bound Lemma \ref{lemlower} would not apply to them.  Therefore, we consider how the sets $K(a)$ intersect each other.  

To get a handle on these intersections, we decompose $R_H$ into subsets where $|x_i|$ and $|y_i|$ are in dyadic ranges.  For $(a,b) = (a_1, ..., a_r, b_1, ..., b_r)$, we define $\tilde K(a,b)$ by

$$ \tilde K(a,b) = \{ (x,y) \in V^2: |x_i| \sim a_i \textrm{ and }  |y_i| \sim b_i \}. $$

\noindent We consider dyadic $a_i, b_i$ with $1 \le a_i, b_i$ and $a_i b_i \le H$.  (If $a_i = 1$, we replace $|x_i| \sim 1$ by $|x_i| \le 1$.)  Then we have

$$ R_H = \sqcup_{a,b} \tilde K(a,b). $$

Each point of $\tilde K(a,b)$ lies in  $\sim \mu(a,b)$ different sets $K(\tilde a)$, where

\begin{equation} \label{muform}
 \mu(a,b) \sim \prod_{i=1}^r \log \left( \frac{ H }{a_i b_i} \right). 
 \end{equation}
 
 Therefore,
 
 \begin{equation} \label{frame} C^{-d} \frac{ H^d (\log H)^r }{\Delta_k} \le \sum_a \# (\Lambda^2 \cap K(a)) \sim  \sum_{a,b} \mu(a,b) \# (\Lambda^2 \cap \tilde K(a,b) ). \end{equation} 
 
 The following lemma will help to organize the sum on the right-hand side.  
 
 \begin{lemma} Recall that $(a,b) = (a_1, ..,. a_r, b_1,..., b_r)$ with $a_i, b_i$ dyadic in the range $1 \le a_i, b_i \le H$.   If $\tilde \mu \ge 2$,  then the number of such $(a,b)$ with $\mu(a,b) \le \tilde \mu^r$ is bounded by
 
 $$C^r (\log H)^r \tilde \mu^r (\log \tilde \mu)^r . $$
 
  \end{lemma}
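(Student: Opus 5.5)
Set $t_i = \log_2(H/(a_i b_i))$. Since $a_i, b_i$ are dyadic with $a_i b_i \le H$, each $t_i$ is a nonnegative integer (up to rounding). By \eqref{muform}, $\mu(a,b) \sim \prod_i \max(1, t_i)$. The per-factor implicit constants compound to $C^r$, so we may absorb them by replacing $\tilde\mu$ with a constant multiple of $\tilde\mu$. It therefore suffices to bound the number of tuples $(a,b)$ with $\prod_{i=1}^r \max(1,t_i) \le (C\tilde\mu)^r$.

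Count coordinate by coordinate. For fixed $i$, a pair $(a_i,b_i)$ is determined by $\log_2 a_i \in [0,\log_2 H]$ together with $t_i$; given $t_i$, there are at most $\log_2 H + 1$ choices of $a_i$. The number of tuples is thus at most
\[ (2\log H)^r \cdot \#\Big\{(t_1,\dots,t_r)\in \ZZ_{\ge 0}^r : \prod_i \max(1,t_i) \le (C\tilde\mu)^r\Big\}. \]
It remains to show this lattice-point count is at most $C^r \tilde\mu^r (\log\tilde\mu)^r$.

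Write $s_i = \max(1,t_i)$; each value of $s_i$ comes from at most two values of $t_i$, costing $2^r$. The count of positive integers $s_1,\dots,s_r$ with $\prod s_i \le X$ is the main step. A naive bound of $X(\log X)^{r-1}/(r-1)!$ is fine, but I would avoid the factorial analysis with a Rankin-type trick. For any $\sigma > 1$,
\[ \#\{\prod s_i \le X\} \le \sum_{s} \Big(\frac{X}{\prod s_i}\Big)^{\sigma} = X^{\sigma}\zeta(\sigma)^r. \]
Take $X = (C\tilde\mu)^r$ and $\sigma = 1 + 1/\log \tilde\mu$. Then $X^{\sigma} = (C\tilde\mu)^r \, (C\tilde\mu)^{r/\log\tilde\mu} \le (C'\tilde\mu)^r$, using $\tilde\mu \ge 2$. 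Also $\zeta(\sigma) \le \sigma/(\sigma-1) \le 2\log\tilde\mu + 2 \lesssim \log\tilde\mu$ for $\tilde\mu\ge2$. So the count is $\le C^r\tilde\mu^r(\log\tilde\mu)^r$, and multiplying by $(2\log H)^r 2^r$ gives the lemma.

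The main obstacle is the middle step: bounding the number of integer tuples with bounded product by something of the form $C^r\tilde\mu^r(\log\tilde\mu)^r$, uniformly in $r$. The Rankin bound with $\sigma-1 \approx 1/\log\tilde\mu$ handles this cleanly. The remaining care is bookkeeping, namely keeping the $\sim$ constants in $\mu$ multiplicative per coordinate so that they contribute only $C^r$.
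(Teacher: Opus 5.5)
Your proof is correct. The reduction is the same as the paper's. The paper also fixes $a$ (at most $(\log H)^r$ choices) and writes $b_i = 2^{-k_i} H/a_i$, so that $\mu(a,b)=\prod_i k_i$. It is then left with counting $k$ with $\prod_i k_i \le \tilde\mu^r$.

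The difference is in that last count. The paper pigeonholes each $k_i$ into a dyadic block $2^{\ell_i} \le k_i < 2^{\ell_i+1}$. For a fixed profile $\ell$ there are at most $\prod_i 2^{\ell_i} \le \tilde\mu^r$ choices of $k$. The admissible $\ell$ are lattice points in the simplex $\{\ell_i \ge 0,\ \sum_i \ell_i \le r\log_2\tilde\mu\}$, and the paper bounds their number by the volume $(2r\log_2\tilde\mu)^r/r! \le C^r(\log\tilde\mu)^r$ of an enlarged simplex.

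You replace this with Rankin's trick, $\#\{\prod_i s_i \le X\} \le X^{\sigma}\zeta(\sigma)^r$ with $\sigma = 1+1/\log\tilde\mu$. Here $X^{\sigma-1}$ costs only $C^r$ and $\zeta(\sigma)^r \le (1+\log\tilde\mu)^r$.

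What each route buys:
\begin{itemize}
\item The paper's route is entirely elementary and shows where each factor comes from: $\tilde\mu^r$ from the integers inside one dyadic block, and $(\log\tilde\mu)^r$ from the number of dyadic profiles.
\item Yours is shorter and avoids both the Stirling estimate $r^r/r! \le e^r$ and the lattice-point/volume comparison. It is also visibly uniform in $r$, because the sum factorizes over coordinates.
\end{itemize}

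Your bookkeeping is also a bit more careful than the paper's. The paper tacitly assumes $k_i \ge 1$ and treats $\mu=\prod_i k_i$ as exact. You handle the case $a_i b_i \approx H$ via $\max(1,t_i)$, and you absorb the per-coordinate constants implicit in \eqref{muform} by rescaling $\tilde\mu$.
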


\begin{proof} There are at most $(\log H)^r$ choices of $a$.  We fix $a$ and consider the choices for $b$.

We write $b_i = 2^{-k_i} \frac{H}{a_i}$, so that $\mu(a,b) = \prod_{i=1}^r k_i$.  So we need to count the number of choices for $k_i \in \NN$ so that $\prod_{i=1}^r k_i \le \tilde \mu^r$.

Suppose that $2^{\ell_i} \le k_i  < 2^{\ell_i +1}$, where $\ell_i \in \ZZ_{\ge 0}$.   Then $\prod_i 2^{\ell_i} \le \prod_i k_i \le \tilde \mu^r$.   If we fix $\ell_i$, then there are $\le 2^{\ell_i}$ choices for $k_i$, and so the number of choices for $k$ is $\le \prod_i 2^{\ell_i} \le \tilde \mu^r$.

Now we need to consider the number of choices of $\ell = (\ell_1, ..., \ell_r)$.  We know that $\ell_i \in \ZZ_{\ge 0}$ and that $\sum_{i=1}^r \ell_i \le r \log_2 (\tilde \mu)$.   Let $T_s$ be the region defined by $\ell_i \ge 0$ for each $i$ and $\sum_{i = 1}^r \ell_i \le r s$.   The number of $\ell_i$ is the number of lattice points in $T_{\log_2(\tilde \mu)}$.  Now if a lattice point lies in $T_s$,  then the unit cube with ``bottom corner'' at $s$ lies in $T_{s+1}$ and so the number of lattice points in $T_s$ is bounded by the volume of $T_{s+1}$.   Since $\tilde \mu \ge 2$, we have $\log_2(\tilde \mu) + 1 \le 2 \log_2(\tilde \mu)$,  and so the number of choices for $\ell$ is at most

$$ \Volume(T_{2 \log_2 \tilde \mu} ) = \frac{ ( 2 r \log_2 (\tilde \mu) )^r }{r!} \le C^r (\log \tilde \mu)^r. $$

We have $(\log H)^r$ choices for $a$ and then $C^r (\log \tilde \mu)^r$ choices of $\ell$ and then $\tilde \mu^r$ choices of $k$.
\end{proof} 

When $\mu(a,b)$ is large, we will use a crude upper bound for $\# (\Lambda^2 \cap \tilde K(a,b))$ coming from Lemma \ref{lemupper}:

$$ \# (\Lambda^2 \cap \tilde K(a,b)) \le \Cov_{1/10} (\tilde K(a,b)) \le C^d \Volume(\tilde K(a,b)). $$

As above, define $k_i$ so that $b_i = 2^{-k_i} \frac{H}{a_i}$.  Recall that $\mu = \mu(a,b) = \prod_i k_i$.  By the arithmetic-geometric mean inequality we have 

$$ \mu^{1/r} = (\prod_{i=1}^r k_i)^{1/r} \le \sum_{i=1}^r k_i / r. $$

And so

$$ - \sum_i k_i \le - r \mu^{1/r}. $$

So if $\mu(a,b) \ge \tilde \mu^r$,  

$$ \Volume(\tilde K(a,b)) \le C^d 2^{-\sum_i k_i} H^d \le C^d 2^{- r \tilde \mu} H^d. $$

Therefore, we have

 $$ \sum_{a,b : \tilde \mu^r \le \mu(a,b) \le (2 \tilde \mu)^r} \mu(a,b) \# (\Lambda^2 \cap \tilde K(a,b) ) \le C^r (\log H)^r \tilde \mu^r (\log \tilde \mu)^r  \cdot \tilde \mu^r \cdot C^d 2^{- \tilde \mu r} H^d = C^d \left[ \frac{C \tilde \mu^2 \log \tilde \mu}{2^{\tilde \mu}} \right]^r (\log H)^r H^d. $$


We now return to \eqref{frame}, which we rewrite as

 \begin{equation} \label{frame2} C^{-d} \frac{ H^d (\log H)^r }{\Delta_k} \le \sum_{\tilde \mu \textrm{ dyadic}} \left[  \sum_{a,b : \tilde \mu^r \le \mu(a,b) \le (2 \tilde \mu)^r} \mu(a,b) \# (\Lambda^2 \cap \tilde K(a,b) ) \right]. \end{equation} 
 
Recall that $r \ge d/2$.   So if $\tilde \mu$ is larger than $C \Delta_k^{\frac{1}{100d}}$, then $\left[ \frac{C \tilde \mu^2 \log \tilde \mu}{2^{\tilde \mu}} \right]^r $ is far smaller than $\frac{1}{C^d \Delta_k}$ and the contribution of that term on the right-hand side of \eqref{frame2} is negligible compared to the left-hand side.  After rearranging all these terms to the left hand side, what remains is

 \begin{equation} \label{frame3} C^{-d} \frac{ H^d (\log H)^r }{\Delta_k} \le \sum_{\tilde \mu \le C \Delta_k^{\frac{1}{100d}}} \left[  \sum_{a,b :  \mu(a,b) \le \tilde \mu^r} \mu(a,b) \# (\Lambda^2 \cap \tilde K(a,b) ) \right] \le C^r \Delta_k^{\frac{r}{100d}} \# (\Lambda^2 \cap R_H). \end{equation} 

Finally this gives \eqref{goal1}: 

$$\sum_{m \in O_{k, \le H}} N_{H_m} (O_k, H) =  \# (\Lambda^2 \cap R_H) \ge C^{-d} \frac{H^d (\log H)^r }{\Delta_k^{1.01}}. $$

The left-hand side in this equation includes the contribution of $m=0$, which is $2 |O_{k, \le H}|$.  By Lemma \ref{lemsizeOkH}, $2 |O_{k, \le H}| \le C^d H^d$.  If $H$ is large enough this contribution is dominated by the right-hand side.  We can check that taking $\log H  > C \Delta_k^{10/d}$ is sufficient.  With this condition, we have

$$\sum_{0 \not= m \in O_{k, \le H}} N_{H_m} (O_k, H)  \ge C^{-d} \frac{H^d (\log H)^r }{\Delta_k^{1.01}}. $$

Now since $|O_{k, \le H} | \le C^d H^d$, we get the desired lower bound on the average, \eqref{goal2}.  

\end{proof}

The proof of Theorem \ref{thmlowercir} is almost the same.  If we replace $H_m$ with $S_m$, then the geometry of the region $R_H$ changes.  The new definition of $R_H$ would be

$$ R_H = \{ (x,y) \in V^2: |x_i|, |y_i|, |x_i^2 + y_i^2 | \le H \} = \prod_{i=1}^r \{(x_i, y_i):   |x_i|, |y_i|, |x_i^2 + y_i^2 | \le H \}$$

Recall that if $i=1, ..., r_1$, then $x_i, y_i \in \RR$, and so the region $ \{(x_i, y_i):   |x_i|, |y_i|, |x_i^2 + y_i^2 | \le H \}$ is just a disk.  On the other hand, if $r_1 + 1 \le i \le r_1 + r_2 = r$, then $x_i, y_i \in \CC$, and the region $ \{(x_i, y_i):   |x_i|, |y_i|, |x_i^2 + y_i^2 | \le H \}$ is quite similar to the corresponding region in the analysis of $H_m$.  This is because we can factor $x_i^2 + y_i^2 = (x_i + i y_i) (x_i - i y_i)$ (sorry for the double use of the symbol $i$).  Now if $a = (a_{r_1 + 1}, ..., a_r)$, we can define $K(a)$ as the set of $(x,y) \in V^2$ so that

$$ |x_i|^2 + |y_i|^2 \le H \textrm{ if } 1 \le i \le r_1, $$

$$ | x_i + i y_i| \le a_i, |x_i - i y_i| \le \frac{H}{a_i} \textrm{ if } r_1 + 1 \le i \le r. $$

As before, $K(a) $ is a symmetric convex subset of $R_H$.  The number of choices for $a$ is now only $(\log H)^{r_2}$ instead of $(\log H)^{r_1 + r_2}$.  The rest of the analysis is similar.  In the proof of Theorem \ref{thmlowerhyp}, we used that $r_1 + r_2 \ge d/2$, which always holds.  As a substitute for this, we need to use the hypothesis that $r_2 \ge d/10$ that we added in Theorem \ref{thmlowercir}.  

We can use Theorem \ref{thmlowerhyp} to prove Theorem \ref{counterhyp} and Theorem \ref{thmlowercir} to prove Theorem \ref{countercircle}.   To prove Theorem \ref{counterhyp},  we let $H$ be a large constant to be chosen later.   We let $k$ be a sequence of totally real number fields of degree $d \rightarrow + \infty$ with $\Delta_k < C_{GS}^d$.   On the one hand, we have

$$[c_1 H]^d \le |O_{k, \le H} | \le \left[ C H \right]^d . $$

\noindent We will choose $H >  C_{GS}^{10} > C \Delta_k^{10/d}$, which is large enough to apply Theorem \ref{thmlowerhyp}.  By Theorem \ref{thmlowerhyp},  we can choose a non-zero $m \in O_{k, \le H}$ so that 

$$N_{H_m}(O_k, H) \ge C^{-d} \frac{ (\log H)^d}{\Delta_k^{1.01}} \ge \left[ C^{-1} C_{GS}^{-1.01} (\log H) \right]^d. $$

\noindent    We fix $H$ large compared to the other constants and we see that as $d \rightarrow + \infty$, $|O_{k,H}| \rightarrow + \infty$ and there is some $\beta > 0$ so that $|N_{H_m}(O_k, H)| \ge |O_{k, \le H}|^\beta$ for all choices of $k$.  This gives Theorem \ref{counterhyp}.

Finally,  we use Theorem \ref{thmlowercir} to give a counterexample to the unit distance conjecture, proving Theorem \ref{counterunit}.   By Theorem \ref{thmmart},  we can choose a sequence of number fields $k$ with degree $d \rightarrow \infty$ with $r_1 = r_2$ and with  $\Delta_k < C_{GS}^d$.   Let $H$ be a large constant to be chosen later.   In particular, we will choose $H > C_{GS}^{20} > C \Delta_k^{20/d}$ so that we meet the hypotheses of Theorem \ref{thmlowercir}.  By Theorem \ref{thmlowercir}, we can choose a non-zero $m \in O_{k, \le H}$ so that 

$$ N_{S_m}(O_k, H) \ge C^{-d} (\log H)^{r_2} = C^{-d} (\log H)^{d/3}. $$

Fix a real embedding $k \rightarrow \RR$,  and then set $P = O_{k,  \le 2 H}^2 \subset \RR^2$.   Recall from Lemma \ref{lemsizeOkH} that 

\begin{equation} \label{cardOkH}  2^{-d} C_{GS}^{-d} H^d \le 2^{-d} \frac{H^d}{\Delta_k^{1/2}} \le  |O_{k, \le H}| \le C_1 H^d
\end{equation}

We will choose $H$ large compared to the other constants,  so that as $d \rightarrow \infty$,  $|O_{k, \le H}| \rightarrow \infty$ and hence $|P| \rightarrow \infty$.  

We will count the number of pairs $p, q \in P$ with $|p - q|^2 = m$.   So

$$U(P) = \# \{ (p_1, p_2, q_1, q_2) \in O_{k, \le 2H}^4: (p_1 - q_1)^2 + (p_2 - q_2)^2 = m \}. $$

If $(p_1, p_2) \in O_{k, \le H}^2$,  then the number of choices for $(q_1,q_2)$ above is at least $N_{S_m}(O_k, H)$.   Therefore,  we have

$$ |U(P)| \ge |O_{k, \le H}|^2 N_{S_m}(O_k, H) \ge |O_{k, \le H}|^2 C^{-d} (\log H)^{d/3} \ge \tilde C^{-d} H^{2d} (\log H)^d. $$

On the other hand, 

$$ |P| = |O_{k, \le 2H}|^2 \le \tilde C^d H^{2d}. $$

By choosing $H$ a fixed large constant,  we see that

$$ U(P) \ge |P|^{1 + \epsilon_0}, $$

for a fixed $\epsilon_0 > 0$ as $d \rightarrow \infty$ and so $|P| \rightarrow \infty$.

\section{Upper bounds} \label{secupperbounds}

In the 1980s,  Spencer,  Szemeredi, and Trotter \cite{SST} proved that the number of distances determined by $n$ points is $O(n^{4/3})$.    That upper bound has not been improved in spite of a lot of effort by many people.   In this section,  we describe some of the methods used and discuss why it is difficult to improve them.

We will focus on the followng special case.   Divide the unit square $[0,1]^2$ into $n$ smaller squares of side length $n^{-1/2}$.   Suppose that $P$ consists of one point in each smaller square.   We call such a set uniformly spaced.   Assuming that $P$ is uniformly spaced,  we will sketch two different one page proofs that $U(P) = O(n^{4/3})$.   Even for uniformly spaced $P$,  we have no idea how to improve the exponent $4/3$.

\subsection{Double counting} \label{subsecdoublecount}

The first upper bound in the unit distance problem is based on double counting.   If we fix two points,  there are at most two unit circles that contain both of them.   We can exploit this fact to bound the incidences between the points and the unit circles.

 Suppose that $P$ is a set of points and $\Gamma$ is a set of unit circles.   Recall that $I(P, \Gamma) = \sum_{\gamma \in \Gamma} |\gamma \cap P|$.   If $\Gamma$ is the set of unit circles centered at $P$,  then $I(P,\Gamma)$ is the number of unit distances.
 
 Since any two points lie in at most two unit circles,  we have
 
 $$ \sum_{\gamma \in \Gamma} { |\gamma \cap P| \choose 2} \le 2 { |P| \choose 2} \sim |P|^2. $$
 
 If $|\gamma \cap P| \ge 2$,  then $|\gamma \cap P|^2 \lesssim { |\gamma \cap P| \choose 2}$.   Therefore,  we get
 
 $$ \sum_{\gamma \in \Gamma}  |\gamma \cap P|^2 \le |\Gamma| + C |P|^2.  $$
 
 Now by Cauchy-Schwarz,  we have
 
 \begin{equation} \label{doubcount} I(P, \Gamma) = \sum_{\gamma \in \Gamma} |\gamma \cap P| \le |\Gamma|^{1/2} \left( \sum_{\gamma \in \Gamma} |\gamma \cap P|^2 \right)^{1/2} \lesssim | \Gamma|  +  |\Gamma|^{1/2} |P|.  \end{equation}
 
 If $|\Gamma| = |P| = n$,  we get $I(P,\Gamma) \le c n^{3/2}$ and so the number of unit distances is $O(n^{3/2})$.   This argument appeared in \cite{Erd} in the 1940s.
 
 There is a barrier to improving the bound coming from finite field examples.   Unit circles also make sense over finite fields $\FF_q$ -- the unit circle with center $(x_0, y_0)$ is the set $\{ (x,y) \in \FF_q^2: (x-x_0)^2 + (y-y_0)^2 = 1 \}$.   Over finite fields, it is still true that any two points are contained in at most two unit circles.  So the argument above still shows that $n$ points determine at most $C n^{3/2}$ unit distances.   This upper bound is sharp when $P = \FF_q^2$,  because we have $q^2$ points and we have $\sim q^3$ unit distances because each unit circle $\gamma$ contains $\sim q$ points.   

Improving the bound $n^{3/2}$ requires us to distinguish unit circles in $\RR^2$ from unit circles in finite fields.   There are several different proofs.   All the proofs use topology in some way.

The proofs become easier in the uniformly spaced case.   The definition of uniformly spaced involves the structure of $\RR^2$ and doesn't have any analogue over finite fields.  

\subsection{Partitioning bound} \label{subsecpart}

In \cite{CEGSW},  Clarkson,  Edelsbrunner, Guibas, Sharir, and Welzl introduced a new approach to incidence geometry problems called the cutting method.  It is a divide and conquer method,  based on dividing space into cells.   With this method,  they gave elegant new proofs of previous results like the Szemeredi-Trotter theorem and the $n^{4/3}$ bound for the unit distance problem and also generalized the method to many more problems.

If the point set $P$ is well spaced,  then the cutting method is particularly clean and simple.   We divide $[0,1]^2$ into a $D \times D$ grid of smaller squares with side length $1/D$.   Here $D$ is  a free parameter that we will optimize later.  We call these smaller squares $O_i$,  and we let $P_i = P \cap O_i$.   Note that each unit circle enters $O(D)$ smaller squares $O_i$.   We let $\Gamma_i$ be the set of unit circles that enter $O_i$.

We have $I(P, \Gamma) = \sum_i I(P_i,  \Gamma_i)$.   We bound each term in the sum using the double counting bound \ref{doubcount} above:

$$ I(P, \Gamma) \lesssim \sum_i |\Gamma_i| + |\Gamma_i|^{1/2} |P_i|. $$

Since $|P|$ is uniformly spaced,  $ |P_i| \sim D^{-2} |P|$ for every $i$.   We also know that $\sum_i |\Gamma_i| \lesssim D |\Gamma$,  because each unit circle of $\Gamma$ enters $O(D)$ cells $O_i$.    Plugging in these facts,  we have

$$ I(P, \Gamma) \le D |\Gamma| + |P| D^{-2} \sum_i |\Gamma_i|^{1/2}. $$

Using Cauchy-Schwarz,  we can bound

$$ \sum_{i = 1}^{D^2} |\Gamma_i|^{1/2} \le D \left( \sum_{i} |\Gamma_i| \right)^{1/2} \le D^{3/2} |\Gamma|^{1/2}. $$

Plugging in we get

$$ I(P, \Gamma) \lesssim D |\Gamma| + D^{-1/2} |P| |\Gamma|^{1/2}. $$

Now we optimize the choice of $D$.   The optimal choice is $D = |P|^{2/3} |\Gamma|^{-1/3}$,  which makes the right-hand side equal to $|\Gamma|^{2/3} |P|^{2/3}$.   However, we are constrained to choose $D \ge 1$.   So we choose $D$ to be the maximum of 1 and $|P|^{2/3} |\Gamma|^{-1/3}$  which gives

$$ I(P, \Gamma) \lesssim |\Gamma| + |P|^{2/3} |\Gamma|^{2/3}. $$

If $|P| = |\Gamma| = n$,  we get the upper bound $I(P, \Gamma) = O(n^{4/3})$.  

We remark that this upper bound applies not just to unit circles but also to straight lines and also to many other classes of curves.   The bound applies whenever any two points lie in $O(1)$ curves of $\Gamma$ and any curve of $\Gamma$ intersects $O(D)$ squares $O_i$ above.   It is also straightforward to generalize the method to higher dimensions.   For straight lines,  this upper bound is actually sharp,  matching examples based on integer grids.   But for unit circles,  there is no known matching example.   Even in spite of the recent new examples,  I believe that the $n^{4/3}$ upper bound for unit circles can be improved.

If the points of $P$ are not uniformly spaced,  then \cite{CEGSW} found a remarkable way to modify the argument so that it still works.   Instead of cutting the square into standard $1/D \times 1/D$ subsquares,  they designed a partition of the plane adapted to the geometry of $P$ which accomplished the same thing.   Their partition is built by starting with $D$ random curves from $\Gamma$ and using them as ``cell walls''.   These cell wall curves divide the plane into $\sim D^2$ cells.   After possibly refining these cells,  we get a partition into $\sim D^2$ cells $O_i$.   The key features of this partition are that each cell contains approximately the same number of points of $P$ and that each curve of $\Gamma$ intersects $O(D)$ of the cells.   The rest of the argument is the same as above.

This argument uses topology in order to bound the number of cells that each curve $\gamma$ enters.   The topological argument goes like this.   For unit circles or straight lines,  each curve $\gamma \in \Gamma$ crosses any other $\gamma' \in \Gamma$ only $O(1)$ times.  Therefore,  each $\gamma$ crosses the cell walls $O(D)$ times.   And therefore each $\gamma$ enters $O(D)$ cells.

\subsection{Spectral bound}

If $P$ is uniformly spaced,  there is an alternate proof of the $O(n^{4/3})$ upper bound on unit distances based on a Fourier analytic or spectral argument developed by Iosevich-Rudnev \cite{IR} and Vinh \cite{V}.  

This argument is easiest to explain in finite fields, so let us start there.  Suppose $\FF_q$ is a finite field.  The analogue of the unit circle is the set $S := \{ (x,y) \in \FF_q^2: x^2 + y^2 = 1 \}$.  Suppose $P \subset \FF_q^2$ is a point set.  Then the number of unit distances in $P$ is

\begin{equation} \label{UPenFq}
 U(P) = \sum_{x, y \in \FF_q^2} 1_{P}(x) 1_P(y) 1_S(x-y). 
 \end{equation}

There is a natural linear operator in this formula.  If $f: \FF_q^2 \rightarrow \RR$, we define

$$ T_S f(x) := \sum_{y \in \FF_q^2} f(y) 1_S(x,y).$$

Then we can rewrite the number of unit distances in terms of the linear operator $A_S$:

\begin{equation} \label{UPAS}
 U(P) = \langle 1_P, T_S 1_P \rangle. 
 \end{equation}

Given this formula, it makes sense to try to study $U(P)$ using the spectral theory of the operator $T_S$ -- by looking at the singular values and singular vectors of $T_S$.  

The operator $T_S$ is actually a convolution operator: $T_S f = f * 1_S$.  Therefore, its singular values and singular vectors can be computed from the Fourier transform of $1_S$.  The Fourier transform is a non-trivial exponential sum which can be bounded using Weil's estimate for exponential sums cf. \cite{IR}.  The result is that $T_S$ has one singular value of size $\sim q$ with singular vector equal to the all 1's vector, $1_{\FF_q^2}$, and all other singular values of $T_S$ have norm $\lesssim q^{1/2}$.  Now we can decompose $1_P = \frac{|P|}{q^2} 1_{\FF_q^2} + \tilde 1_P$, where $\tilde 1_P$ is perpendicular to $1_{\FF_q}$.  Plugging into \eqref{UPAS} and expanding, we see that

\begin{equation} \label{specbound}
U(P) \lesssim q^{-1}|P|^2 + q^{1/2} |P|.
\end{equation}

In this formula, the first term represents the number of unit distances in a random set of size $|P|$, and so it cannot be improved.  Recall that the double counting bound gives $|U(P)| \lesssim |P|^{3/2}$.  The spectral bound improves on the double counting bound when $q \ll |P| \ll q^2$.  

This spectral method also makes sense in Euclidean space with a little work.  Let $1_{S^1}: \RR^2 \rightarrow \RR$ be the characteristic function of the unit circle.   Suppose $P \subset \RR^2$, and let $\mu_P$ be the measure consisting of a unit Dirac mass at each point $p \in P$.  Then we have the formula

\begin{equation} \label{UPeneuc}
 U(P) = \int \int \mu_P(x) \mu_P(y) 1_{S^1}(x-y). 
 \end{equation}

The shape of this formula appears in many problems in analysis / physics.   The general shape is an integral of the form $\int \int \mu(x) \mu(y) K(x-y)$.   For example,  if $\mu$ is a charge density,  and $K(x-y)$ represents the potential energy coming from the interaction between a point charge at $x$ and a point charge at $y$,  then the total potential energy would be $\int \int \mu(x) \mu(y) K(x-y)$.    There is a natural linear operator appearing in this formula.  We define $T_K f(x) = f * K (x) = \int f(y) K(x-y) dy$.  Then we have 

$$ \int \int \mu(x) \mu(y) K(x-y) = \langle \mu, T_K \mu \rangle, $$

\noindent and it makes sense to study this type of integral using the spectral theory of the operator $T_K$.  Because $T_K$ is a convolution operator, its spectral theory can be read off from the Fourier transform of $K$.

However, the formula \eqref{UPeneuc} is too singular to apply this method directly.  The function $1_{S^1}$ is not continuous. The method works well if $K$ is smooth and compactly supported.  If $K (x)$ is smooth, compactly supported, and $1_{S^1}(x) \le K(x)$, then we can bound 

\begin{equation} \label{UPeneuc2}
 U(P) = \int \int \mu_P(x) \mu_P(y) 1_{S^1}(x-y) \le \int \int \mu_P(x) \mu_P(y) K(x-y) = \langle \mu_P, T_K \mu_P \rangle, 
 \end{equation}

\noindent and we can bound this expression using the spectral theory of $T_K$, which is given by $\hat K$.  

We can choose $K$, and the best known choice is a smooth approximation of the characteristic function of an annulus.  We let $\Ann_{1, w}:= \{ x: 1-w \le x \le 1+w \}$ and we let $K_w$ be a smooth approximation of $1_{\Ann_{1,w}}$.  Here $w > 0$ is a parameter we can choose below.  With this choice of $K$,  we are counting distances that lie in $[1-w, 1+w]$, which gives an upper bound for the number of exact unit distances.  As we increase $w$, we are increasingly overcounting, but the function $K_w$ becomes smoother and so the spectral theory of the operator $T_{K_w}$ becomes nicer.

When we study the singular values of $\hat K$, we see some large singular values at low frequency and we need to know something about the spacing of $P$ to control them.  Recall that if $P \subset [0,1]^2$ is a set of $n$ points, we say $P$ is uniformly spaced if it contains $1$ point in each sub-square of side length $n^{-1/2}$.  When $P$ is uniformly spaced in this sense, we get a bound very similar to the one in the finite field case.  The parameter $w$ is analogous to $1/q$ and the bound is 

\begin{equation} \label{specbound2}
U(P) \lesssim w |P|^2 + w^{-1/2} |P|. 
\end{equation}

Since $w$ is a parameter, we can now optimize it.  The optimal choice is $w \sim |P|^{-2/3}$, which gives $|U(P)| \lesssim |P|^{4/3}$.  


\subsection{Why is the exponent 4/3 hard to improve?}

If $P$ is uniformly spaced, then we have sketched two simple proofs that $|U(P)| \lesssim |P|^{4/3}$.  No one knows how to improve the exponent 4/3, even for uniformly spaced sets.

There are several points of view about which help explain why the exponent 4/3 is hard to improve.

One issue is that 4/3 is actually the sharp exponent for two different cousin problems to the unit distance problem.  To set these up, let $S^1$ be the unit circle centered at 0, and recall that $U(P) = \# \{ p, q \in P : p-q \in S^1 \}$.   If $\gamma$ is another curve in $\RR^2$, let us define

$$ U_\gamma(P) = \# \{ p, q \in P : p - q \in \gamma \}. $$

\noindent If $\gamma$ is any strictly convex $C^2$ curve, then the partitioning method shows that $ |U_\gamma(P)| \lesssim |P|^{4/3}$.   The spectral method 
also works if $P$ is uniformly spaced and the curvature of $\gamma$ is roughly constant.  The bound $|P|^{4/3}$ is actually sharp for some choices of $\gamma$ in this class.

The first example is the parabola -- when $\gamma$ is the curve $y = x^2$.

The second example is much rougher.     In the 1920s, Jarnik constructed a strictly convex curve $\gamma$ that contains $\sim H^{2/3}$ integer points in the square $[-H,H]^2$.  If we take $P = \ZZ_{\le 2H}^2$,   we see that $|P| \sim H^2$ and $U_\gamma(P) \sim H^2 (H^{2/3}) \sim |P|^{4/3}$.  The Jarnik curve $\gamma$ is not unique -- there are many different such curves.  

To improve the exponent 4/3 for the unit circle we have to distinguish it from both the parabola and the Jarnik curves.

The spectral method applies to a wide range of problems and there are a number of examples where it is hard to improve.  In particular, the spectral method we used here with an auxiliary function $K(x) \ge 1_{S^1}(x)$ is similar to the spectral approach to the density of sphere packings and error-correcting codes introduced by Delsarte in the 1970s.  There are a number of variants of that problem where we don't know how to improve on the spectral method -- for instance the asymptotics of the optimal density of sphere packings in $\RR^d$ as $d \rightarrow \infty$.

For another perspective,  the unit distance problem is a special case of the following general problem studied in graph theory and computer science.  We are given a graph $G$ with vertex set $V$ and edge set $E$.  If $P \subset V$, we write $E(P)$ for the number of edges with both endpoints in $P$.   For a given graph $G$, the problem is to estimate $E_G(n) = \max_{|P| =n } E(P)$.  The unit distance problem in $\FF_q^2$ is exactly this problem where $G$ is a graph with vertex set $V = \FF_q^2$ and edge set $E = \{ (p,q): p - q \in S^1 \}$.  This graph has $q^2$ vertices and each vertex has degree $\sim q$.  

 The spectral method is a standard method in graph theory for bounding $E_G(n)$ in terms of the spectrum of the adjacency matrix of $G$.  For instance, if $G$ is a random graph with degree $D$ and with $V$ vertices, then with high probability, the adjacency matrix of $G$ has one singular value of size $D$ (with singular vector equal to the all 1's vector) and other singular values of size $\lesssim D^{1/2}$.   This leads to the bound

\begin{equation} \label{specbound3}
E(P) \lesssim \frac{D}{V}  |P|^2 + D^{1/2} |P|. 
\end{equation}

\noindent These bounds exactly correspond to the ones for the finite field unit distance problem in \eqref{specbound}.

If $G$ is a random graph, then with high probability, the true size of $E_G(n)$ is much smaller than the spectral bound \eqref{specbound3}.  The spectral bound can be computed in polynomial time, giving an efficient proof that $E_G(n) \lesssim \frac{D}{V} n^2 + D^{1/2} n$.   In the regime $V^{1/2} \le n \le V$,  to the best of my knowledge,  the spectral bound is the strongest bound that we know how to prove efficiently.  Of course in exponential time, we can compute $E_G(n)$ exactly.  But if $G$ is a random graph and $V^{1/2}  \ll n \ll V$, then I don't know how to give a polynomial length proof that $E_G(n) \ll  \frac{D}{V} n^2 + D^{1/2} n$.  It even seems plausible that no such short proofs exist.  The issues here are closely related to a number of well studied problems in average case computational complexity in computer science, such as the planted clique problem, cf.  \cite{BHKKMP}.

It is conceivable that there is a better way to bound $E_G(n)$ for general graphs, which would be a remarkable discovery in computer science and graph theory.   But without such a discovery,  one should look for ways to bound $U(P)$ that are based on special structures in the unit distance problem that don't appear when bounding $E_G(n)$ for a general graph $G$ like a random graph.

\subsection{Algebraic geometry point of view}

One special feature about the unit distance problem is that we can describe it using polynomial equations.  This structure is used crucially in computational work that evaluates $U_{max}(n)$ for small values of $n$, such as the papers \cite{Ale} and \cite{Eng}.

Suppose we would like to compute $U_{max}(n)$ for a particular $n$.  We are considering $n$ points $(x_i, y_i) \in \RR^2$.  Next we select a set $U$ of pairs $(i,j) \in \{1, ..., n\}^2$ with $i < j$.  We have to decide whether there are distinct points $(x_i, y_i) \in \RR^2$ so that for all $(i,j) \in U$, $(x_i - x_j)^2 + (y_i - y_j)^2 = 1$.  This is a system of polynomial equations with $2n$ real variables, $U$ equations, and a few non-equalities to make sure the points $(x_i, y_i)$ are distinct.  Tarski-Seidenberg proved a remarkable result in logic / algebraic geometry which implies that there is an algorithm to decide any such problem.

\begin{theorem} \label{thmtarseid} If $p_j(x_1, ..., x_n)$ are polynomials with rational coefficients, and $S$ is a Boolean formula based on the statements $p_j(x_1, ..., x_n) = 0$ and/or $p_j(x_1, .., x_n) > 0$, then there is an algorithm which decides whether there is any point $(x_1, ..., x_n) \in \RR^n$ that obeys the Boolean formula $S$.  \end{theorem}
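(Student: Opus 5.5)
The plan is to prove the Tarski--Seidenberg statement by quantifier elimination over real closed fields. Concretely, I would show that for any Boolean formula $S$ in polynomial equalities and strict inequalities in $x_1, \dots, x_n$ with rational coefficients, the sentence $\exists x_1 \cdots \exists x_n \, S$ is equivalent, by an effective procedure, to a quantifier-free sentence. Such a sentence involves only rational numbers, so its truth is decided by rational arithmetic.

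By induction on the number of variables, it suffices to eliminate a single existential quantifier. Given polynomials $p_1, \dots, p_s \in \QQ[y_1, \dots, y_{n-1}][x]$, I want an algorithm producing a quantifier-free formula in $y$ that is true exactly when some $x \in \RR$ realizes a prescribed sign pattern of $(p_1(y,x), \dots, p_s(y,x))$. Any Boolean formula is a finite disjunction of sign-pattern conditions, so this is enough.

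The key structural step is a one-variable \emph{sign-invariance} lemma. For fixed $y$, the sign vector of the $p_j(y,\cdot)$ along the real line depends only on the ordering of the real roots of the $p_j(y,\cdot)$ and their derivatives. It therefore depends only on the signs of finitely many polynomials in $y$ computed from the coefficients: leading coefficients, discriminants, and subresultants (principal subresultant coefficients) of pairs among the $p_j$ and their iterated derivatives.

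I would carry this out through Sturm--Tarski sequences or Thom's lemma. The Sturm--Tarski theorem expresses the number of real roots of $p$ at which $q$ is positive, minus those at which it is negative, as a Cauchy index computed from signed remainder sequences. A generic remainder sequence is governed by subresultant coefficients, which are polynomials in the coefficients and hence in $y$. Consequently, the existence of a root of a given polynomial with prescribed signs of the others is a Boolean combination of sign conditions on explicit polynomials in $y$. The open intervals between roots, and the two unbounded ends, are handled similarly via signs of leading coefficients and of derivatives at roots, using Thom's lemma. Since every real $x$ is either a root of some $p_j$ or $p_j'$, or lies in an interval between consecutive such roots, this covers all cases.

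The main obstacle is the degeneration of the remainder sequences as $y$ varies: leading coefficients can vanish, changing degrees. I would handle this by an initial case split on which leading coefficients vanish, truncating the polynomials accordingly, and by using subresultants instead of Euclidean remainders so that everything stays polynomial in $y$. This produces finitely many cases, each described by explicit polynomial sign conditions in $y$, and eliminating $x$ yields a quantifier-free formula.

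Iterating the elimination $n$ times leaves a variable-free Boolean combination of sign conditions on rational numbers, which is evaluated directly.
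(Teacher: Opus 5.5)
The paper gives no proof of this statement. It quotes Tarski--Seidenberg as a classical black box, uses it only to conclude that $U_{max}(n)$ is computable, and refers to Basu's survey for the algorithmic refinements. Your outline is the standard quantifier-elimination proof: Tarski's Sturm-sequence method in its modern subresultant form, as in Basu--Pollack--Roy. Its architecture is sound: reduce $S$ to a disjunction of sign conditions, eliminate one variable with parameters, iterate, and evaluate a variable-free sentence by rational arithmetic.

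Two steps are stated more quickly than your tools justify.

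First, the Sturm--Tarski theorem gives the Tarski query $\mathrm{TaQ}(q,p)=\sum_{p(x)=0}\mathrm{sign}\,q(x)$ for a single $q$. Such queries do not determine joint sign conditions. Suppose $p$ has two real roots, $q_1$ has signs $+,-$ at them, and $q_2$ has either signs $+,-$ or signs $-,+$. Then all single-polynomial queries agree, but the realized pairs of signs differ. So your ``Consequently'' needs the Ben-Or--Kozen--Reif step:
\begin{itemize}
\item The matrix with rows $(1,1,1)$, $(0,1,-1)$, $(0,1,1)$ expresses $(\mathrm{TaQ}(1,p),\mathrm{TaQ}(q,p),\mathrm{TaQ}(q^2,p))$ in terms of the numbers of roots of $p$ with $q=0$, $q>0$, $q<0$, and this matrix is invertible.
\item Its $s$-fold tensor power relates the queries of $q_1^{\alpha_1}\cdots q_s^{\alpha_s}$, $\alpha\in\{0,1,2\}^s$, to the $3^s$ joint counts.
\end{itemize}
Consequently, the polynomials in $y$ your procedure actually needs are principal subresultant coefficients of $p$ and $p'q_1^{\alpha_1}\cdots q_s^{\alpha_s}$. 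Pairs from the derivative-closed family, as listed in your sign-invariance paragraph, are not enough on their own.

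Second, the Cauchy index must be read off from the signed principal subresultant coefficients even when some of them vanish, that is, when the remainder sequence is defective. Your case split on the leading coefficients of the $p_j$ fixes the input degrees but not the degree sequence of the remainders. This is handled either by the generalized permanences-minus-variations count of Basu--Pollack--Roy or by a further case split on which subresultant coefficients vanish.

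With these points filled in, your argument proves exactly what the paper uses: decidability, and hence computability of $U_{max}(n)$. Citing the theorem, as the paper does, is simply the economical choice for a survey. Your route does not deliver the running-time picture the paper sketches. Eliminating one variable at a time roughly squares the degree bounds at each step, so in general the sizes grow doubly exponentially in $n$. The singly exponential algorithms of Basu--Pollack--Roy for the existential problem instead treat the whole block $x_1,\dots,x_n$ at once via critical-point methods.
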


\noindent Because of Theorem \ref{thmtarseid}, there is an algorithm that computes $U_{max}(n)$ for any $n$.  For each set of pairs $U$ in $\{1, ..., n \}^2$, we can use this algorithm to decide whether there is a set of $n$ distinct points $(x_i, y_i) \in \RR^2$ so that each pair $(i,j) \in U$ is a unit distance.  Then among all $U$ that correspond to unit distances, we take the maximum cardinality.
It was not at all obvious that $U_{max}(n)$ can be computed at all, since it is defined as the maximum over an infinite set.  Theorem \ref{thmtarseid} gives a way to compute that maximum, which is a significant insight.

The best running time of the algorithm in Theorem \ref{thmtarseid} is an important problem at the interface of computer science and algebraic geometry.   See Basu's survey article \cite{Ba} for an engaging overview of this subject.   The original work of Tarski-Seidenberg produces an algorithm with incredibly slow running time.   A number of people contributed important ideas to improve the running time,  including Collins,  Basu, Pollack, and Roy, leading up to algorithms that run in exponential time.   See \cite{Ba} for more details.  

The papers \cite{Ale} and \cite{Eng} indeed use this approach to help compute $U_{max}(n)$.   The computation becomes difficult for large $n$ because the number of choices for $U$ grows exponentially,  and the running time to evaluate each $U$ also grows exponentially.   Papers directly using these algorithms got stuck around $n=15$ and the paper \cite{Ale} introduced some additional tricks to go further, getting up to $n=21$.

There is a good reason to think that the running time of the algorithm in Theorem \ref{thmtarseid}
should be at least exponential in $n$: this class of problems includes difficult discrete problems like 3SAT.  We can encode an example of 3SAT into a Tarski-Seidenberg problem as follows.  First we add polynomial equations  $x_i^2 - x_i = 0$ which force each $x_i$ to be 0 or 1.  Next we consider a 3SAT clause, such as 

`` $x_3 = 0$ or $x_7 = 1$ or $x_{13} = 0$. ''

We can encode this clause as a polynomial equation:

$$ x_3 (x_7 - 1) x_{13} = 0. $$

\noindent  In computer science, it is widely believed that deciding 3SAT requires exponential time, and so solving the Tarski-Seidenberg decision problem must also take exponential time.    



To summarize, the unit distance problem can be converted into a sequence of Tarski-Seidenberg decision problems, which shows that $U_{max}(n)$ is computable.  But I don't see how to get any more mileage out of the mere fact that the unit distance problem can be written in this way.  Known bounds for the unit distance problem are based on special features of the particular equations that arise.

\subsection{Struggling with the obstacles}

Mathematicians in incidence geometry have been struggling with these obstacles for decades.  For a number of problems in the field, mathematicians have proven bounds which improve on the partitioning method and the spectral method, sometimes distinguishing between similar sounding objects like the circle and the parabola.  We mention here a few interesting examples of such work.

In the finite field setting,  Bourgain-Katz-Tao \cite{BKT} proved and used sum-product estimates over finite fields to get bounds for point-line incidences that do not follow from double counting or spectral methods.   For example,  in $\FF_q^2$,  consider a set $P \subset \FF_q^2$ with $|P| = q$ and a set of lines $L$ with $|L| = q$.     Double counting and spectral methods both give the upper bound $I(P, L) \lesssim q^{3/2}$.   This upper bound is actually sharp if $q = p^2$ for a prime $p$ so that $\FF_q$ has a subfield $\FF_p$ of index 2.   In this case,  we can choose $P = \FF_p^2 \subset \FF_q^2$ and we can choose $L$ to be the set of all lines of the form $y = mx + b$ with $b, m \in \FF_p$.   Each such line contains $p$ points of $P = \FF_p^2$,  and so $I(P, L) = p^3 = q^{3/2}$.    But if $q$ is prime,  and $|P| = |L| = q$,  then \cite{BKT} proved that $I(P, L) \lesssim q^{3/2 - \epsilon_0}$ for some $\epsilon_0 > 0$.   The double counting bound and the spectral bound do not distinguish between prime and non-prime $q$,  and so quite different ideas are needed.  

One key obstacle in the unit distance problem is that unit circles behave differently from unit parabolas (translates of the curve $y = x^2$).   The methods presented above do not distinguish these two types of curves.   Elekes found a simpler cousin problem where unit circles and unit parabolas behave differently,  and he proved bounds that distinguish them.   We will discuss this work more in Section \ref{secgoals}.

For the unit distance problem,  the bounds coming from the partitioning method are the best current bounds  even if we assume that $P$ is uniformly spaced.   But there are many other natural incidence problems in the plane,  including the case of circles,  the case of ellipses,  the case of degree $d$ algebraic curves,  ...   The partitioning method and the spectral method have analogues in all these cases.   The method of lenses has led to further improvements on many of these problems.

The method of lenses was first developed by Tamaki and Tokuyama \cite{TT} and Aronov-Sharir \cite{AS} for the case of circles.   Later,  using different methods,  Aronov, Sharir, and Zahl \cite{SZ} developed a more general version,  which applied to all algebraic curves.    Using these results on lenses in an indirect way,  Zahl \cite{Z} improved the bounds for the unit distance problem in $\RR^3$.   

If $P$ is a set of $n$ points in $\RR^3$,  an integer grid construction gives $U(P) \sim n^{4/3}$.   The partitioning method gives an upper bound $U(P) \lesssim n^{3/2}$.   If the point set $P$ is uniformly spaced,  the spectral method also gives $U(P) \lesssim n^{3/2}$.    For any $P \subset \RR^3$, Zahl proved that $U(P) \lesssim n^{3/2 - \epsilon_0}$ for some $\epsilon_0 > 0$.    All the obstacles we mentioned above appear in the 3-dimensional problem just as in the 2-dimensional problem,  and the proof overcomes all of them.  There is no straightforward way to adapt this proof from 3 dimensions to 2 dimensions, but it might be worth searching more for a clever way of applying these ideas.

In another direction,  Pach,  Raz, and Solymosi have an interesting program for improving the upper bounds in the unit distance problem, cf. \cite{PRS}.

To summarize, topological methods like the partitioning method led to the upper bound $U_{max}(n) \lesssim n^{4/3}$ in the early 80s.  In the uniformly spaced case, there are two different one page proofs of this bound.  Mathematicians in the field have introduced a bunch of clever ideas which allow to improve on the partitioning method for a range of cousin problems, but they have not led to any improvement in the original unit distance problem, even in the uniformly spaced case.  

The upper bound $U_{max}(n) \lesssim n^{4/3}$ applies not just to the unit distance problem but to a broad range of problems.  For a few of these problems, the upper bound $n^{4/3}$ is tight, but for the vast majority of them, it can probably be improved.  Improving the exponent 4/3 for any one explicit problem of this type would be a significant milestone.

\section{Classification and structural results} \label{secgoals}

One set of problems in incidence geometry asks to estimate $I_{max, C}(m,n)$ as accurately as possible for different classes of objects $C$.   For instance, we could ask to estimate $U_{max}(n)$ as accurately as possible, perhaps trying to find the infimal exponent $\gamma$ so that $U_{max}(n) \lesssim n^\gamma$.  
Until recently it was generally believed that this infimal exponent $\gamma$ was 1.  We now know that $\gamma > 1$ and that $\gamma \in [1.014, 4/3]$.  

It is not clear to me that we should expect any nice formula for this optimal exponent $\gamma$.  Suppose for a moment that the optimal exponent comes from a version of the recent constructions, taking $P$ to be $O_{k, \le H}^2$ for a well-chosen sequence of number fields $k$ and numbers $H$.  The recent constructions are not easy to optimize.  For instance, they all involve the constant in the bound $\Delta_k < C_{GS}^d$.  The best constant $C_{GS}$ has been investigated extensively in the number theory literature.  We have upper and lower bounds that match up to a factor of around 10.  But there does not seem to be any plausible conjecture for the exact value of $C_{GS}$.  In the constructions we have considered, the bound for $\gamma$ that we get depends on the value of $C_{GS}$.  Optimizing these constructions looks at least as messy and complicated as optimizing $C_{GS}$.

There remain other problems where it looks plausible that the best exponents have a clean form.  For instance, if we consider incidences of points with circles or with parabolas or with ellipses or with degree $d$ algebraic curves for fixed $d$, then the best known example comes from a construction using integer points of bounded sizes on the corresponding variety.  We can replace the integers by $O_k$ in these constructions, but the exponents do not seem to depend on the choice of $k$.  As another example, we can replace the unit circle by a higher degree curve, such as the curve $\gamma_3$ defined by $x^3 + y^3 = 1$.  For the corresponding version of the unit distance problem, number theoretic constructions are not known to give interesting examples, and so the best known examples are based on symmetry and have $U_{\gamma_3}(P) \sim |P|  \log |P|$.  It looks plausible that $U_{\gamma_3, max}(n) = n^{1 + o(1)}$.  

Now let's come back to the question whether the best exponent in the unit distance problem comes from an arithmetic construction of the form $O_{k, \le H}^2$.  I think that this question is really more interesting than the exact value of the best exponent.  Arithmetic constructions give the best known examples for a broad range of problem in incidence geometry.  In fact, for a broad range of problems, all known highly overdetermined examples come from arithmetic constructions.  I think it is a fundamental question whether all such examples have to come from arithmetic constructions.  

This question fits into a longstanding theme in incidence geometry: trying to find classifications and structural results for configurations with many incidences.  

Suppose we consider an incidence problem involving $n$ points and curves.  The problem has about $n$ free parameters.  Each incidence corresponds to an equation.  If the number of incidences is much more than the number of parameters, the problem is called overdetermined.  Intuitively, overdetermined examples should only exist because of some special structure in the problem that explains them.  Mathematicians have worked hard on trying to understand and classify these special structures.  Here are two examples of work in this direction.

\subsection{Abelian group structure}

Elekes, Ronyai, and Szabo developed some of the first structural results in incidence geometry.  They considered incidence problems involving three sets $A, B, C$ instead of two sets $A,B$.  Given a 2-dimensional variety $Z \subset \RR^3$, they asked about the maximum size of $Z \cap (A \times B \times C)$, where $A, B, C \subset \RR$ with $|A| = |B| = |C| = n$.  For varieties $Z$ that they considered, given $a,b \in \RR$, there are $O(1)$ choices of $c \in \RR$ so that $(a,b,c) \in Z$, and so $| Z \cap (A \times B \times C) | \lesssim n^2$.  This upper bound is sometimes sharp.  For instance, if $Z$ is the variety defined by $x_1 + x_2 + x_3 = 0$, and if $A = B = C = \ZZ_{\le n}$, then $|Z \cap (A \times B \times C) | \sim n^2$.  Next suppose that $Z$ is the variety $x_1^2 + x_2^2 + x_3^2 = 1$.  Here the upper bound $n^2$ is still sharp because of the example $A = B = C = \{ \sqrt{j} \}_{j \in \{1, .., n \}}$.  But Elekes and Szabo proved that for most $Z \subset \RR^3$, there is a stronger upper bound for $|Z \times (A \times B \times C)|$.  For example, if $Z$ is defined by $x_1 x_2 + x_1 x_3  + x_2 x_3$, then Elekes-Szabo proved a stronger upper bound for $|Z \cap (A \times B \times C)|$.  In the first two examples, the variety $Z$ is closely related to an abelian group structure, possibly after a change of coordinates.  Without that structure, there are stronger upper bounds.

\begin{theorem} \label{thmes} (Elekes-Szabo, vague statement) There is a constant $\eta > 0$ so that the following holds.  Suppose that $Z \subset \RR^3$ is a subvariety (or smooth submanifold??) and $A, B, C \subset \RR$ with $|A| = |B| = |C| =n$.  Then one of the following two options holds: 

\begin{itemize}

\item The variety $Z$ encodes an abelian group structure in a precise sense.

\item $|Z \cap (A \times B \times C) | \lesssim n^{2 - \eta}$.

\end{itemize}

\end{theorem}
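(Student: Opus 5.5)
The plan is to follow the standard Elekes--Szabo strategy: reduce the three-set problem to a point--curve incidence problem, bound incidences via the partitioning method of Subsection \ref{subsecpart}, and then prove that a bound of the form $n^{2-\eta}$ can only fail if a certain family of curves has an unusually rich composition structure.

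Concretely, generically $Z$ is the graph of a function $c = f(a,b)$, locally or as a finite-valued algebraic correspondence. For each pair $(b,c) \in B \times C$ we form the curve
$$\gamma_{b,c} = \{ (a, a') \in \RR^2 : \exists x,\ (a,x,b) \in Z,\ (a',x,c) \in Z \}.$$
Here the middle variable $x$ runs over $\RR$, not over the set $A$. A quadruple $(a,b,a',c)$ such that some $x$ completes both triples then gives an incidence between the point $(a,a')$ and the curve $\gamma_{b,c}$. By Cauchy--Schwarz in the shared variable, $|Z \cap (A\times B\times C)|^2 \le n \cdot Q$, where $Q$ counts quadruples of this type. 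So $Q$ is bounded by the number of incidences $I(A\times A, \{\gamma_{b,c}\})$, with $n^2$ points and $n^2$ curves.

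If the curves $\gamma_{b,c}$ are pairwise distinct, up to multiplicity $O(1)$, and have bounded degree, then two curves meet in $O(1)$ points. The partitioning bound (Szemeredi--Trotter type) then gives $I \lesssim (n^2)^{2/3}(n^2)^{2/3} + n^2 = n^{8/3}$. Hence $|Z\cap(A\times B\times C)|^2 \lesssim n^{11/3}$, which yields $\eta = 1/6$. Genuine degree issues and real semialgebraic complications are handled as usual: a point lies on $O(1)$ curves through any second point, and crossing numbers are controlled by B\'ezout.

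The main obstacle is the degenerate case, in which the curves $\gamma_{b,c}$ coincide in large families, so that a 2-parameter family degenerates into a 1-parameter family of curves. I would show that in this case the correspondences $\phi_b : a \mapsto x$ (defined by $(a,x,b)\in Z$) satisfy $\phi_c^{-1}\circ\phi_b = \phi_{c'}^{-1}\circ\phi_{b'}$ along a positive-dimensional set. This means the family $\{\phi_c^{-1}\circ\phi_b\}$ is closed under composition up to a one-dimensional ambiguity, i.e.\ it generates a one-dimensional local Lie group of transformations of the line.

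By the classification of one-dimensional (algebraic) groups, there are only a few such groups, each locally isomorphic to $(\RR,+)$ or to the circle. Unwinding the definitions would then give analytic coordinate changes $u,v,w$ with $Z = \{u(x_1)+v(x_2)+w(x_3)=0\}$. This is the precise ``abelian group structure'' alternative. Making this Lie-group extraction rigorous, including the algebraicity of the coordinate changes, is the step I expect to be hardest. My first attempt would be to study the Jacobian (cross-ratio--type) differential condition $\partial_{1}\partial_{2}\log(f_1/f_2)=0$, which characterizes when $f$ is of additive form.
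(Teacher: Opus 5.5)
The survey states this theorem without proof, so I am comparing with the standard argument: Elekes--Szab\'o, as reworked by Raz--Sharir--de Zeeuw, who obtain exactly your $n^{11/6}$.

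Your architecture is the right one. The gap is in how you split into cases. The split is not exhaustive, and the hypotheses you claim in the good case are false.

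\textbf{The good case.} Distinct bounded-degree curves can share a component, so B\'ezout gives nothing. More seriously, ``two points lie on $O(1)$ common curves'' fails for every $Z$. The curve $\gamma_{c,c}$ is the graph of $\phi_c^{-1}\circ\phi_c$, which contains the identity. So every $\gamma_{c,c}$ contains a piece of the diagonal $\{a=a'\}$, and two diagonal points of $A\times A$ can lie on $\sim n$ common curves. Symmetries of $Z$ give further one-parameter families of such coincidences, for surfaces that need not be special. For example, invariance under $x_3\mapsto -x_3$ puts the diagonal inside every $\gamma_{c,-c}$. So ``curves coincide in large families'' does not imply that the whole two-parameter family collapses.

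\textbf{The missing intermediate step.} Consider the set of index pairs whose curve shares a component with infinitely many others. Consider also the dual set of points $(a,a')$ whose dual curve $\{(c,c'):(a,a')\in\gamma_{c,c'}\}$ does the same. You need to show:
\begin{itemize}
\item both sets are constructible, of complexity bounded in terms of $\deg Z$;
\item if they are not Zariski-dense, they lie on $O(1)$ bounded-degree curves, each containing $O(n)$ points of an $n\times n$ grid;
\item they can therefore be discarded at a cost of $O(n^2)\ll n^{8/3}$ quadruples, after which one can arrange a $K_{2,M}$-free incidence graph.
\end{itemize}
Only Zariski-dense degeneracy should be passed to the structural argument. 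This is where ``non-special'' is converted into an exponent, and your plan skips it.

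\textbf{Bookkeeping.} Cauchy--Schwarz over the middle coordinate $x\in B$ produces curves indexed by $(c,c')\in C\times C$, not $B\times C$. You must also exclude $Z$ containing lines parallel to the $x_2$-axis, so that $(a,c)$ determines $x$ up to $O(1)$ choices.

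\textbf{The degenerate case.} The statement you plan to prove there has no content as written. For every $Z$, $\phi_c^{-1}\circ\phi_b$ and $\phi_{c'}^{-1}\circ\phi_{b'}$ share the identity branch on the two-dimensional set $\{b=c,\ b'=c'\}$. What you need is this: the map sending $(b,c)$ to the germ of a local branch $\psi_{b,c}$ of $\phi_c^{-1}\circ\phi_b$ has positive-dimensional fibres over a Zariski-dense set of $(b,c)$. In other words, the family of branches is one-dimensional.

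Given that, closure under composition follows from $\psi_{b_0,c}\circ\psi_{b_0,c'}^{-1}=\psi_{c',c}$. A one-dimensional local transformation group of the line is locally conjugate to translations. This gives analytic $u,v,w$ with $u(x_1)+v(x_2)+w(x_3)=0$ on $Z$ near a generic point, and that local-analytic form is already a legitimate ``precise sense''.

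Do not try to make $u,v,w$ algebraic; that step would fail. For $Z=\{x_1x_2x_3=1\}$ one is forced to use logarithms, and for groups coming from elliptic curves, elliptic logarithms. The algebraic version of the special alternative, as in Elekes--Szab\'o, is a correspondence with the group law of a one-dimensional algebraic group. Getting it needs a further algebraicity argument beyond the local Lie group, not just the classification of one-dimensional groups.
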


Elekes gave a striking application of this result to a problem about unit circles.  Suppose that $p_1, p_2, p_3 \in \RR^2$.  Let $\Gamma_i$ be a set of curves passing through $p_i$.  We define the number of triple points determined by $\Gamma_1, \Gamma_2, \Gamma_3$ by

\begin{equation} \label{deftrip}
T(\Gamma_1, \Gamma_2, \Gamma_3) = \# \{ (\gamma_1, \gamma_2, \gamma_3) \in \Gamma_1 \times \Gamma_2 \times \Gamma_3: \gamma_1 \cap \gamma_2 \cap \gamma_3 \not= \emptyset \}. 
\end{equation}

\noindent We suppose that $|\Gamma_i| = n$ for $i = 1,2,3$ and try to maximize $T$.  We consider three different classes of curves: unit circles, straight lines, and unit parabolas (parabolas of the form $y = x^2 + a x + b$).  For straight lines and for unit parabolas, it can happen that $T(\Gamma_1, \Gamma_2, \Gamma_3) \sim n^2$.  But for unit circles, Elekes-Szabo proved that 
$T(\Gamma_1, \Gamma_2, \Gamma_3) \lesssim n^{2 - \eta}$.  The set of straight lines through $p_i$ form a 1-parameter family, which we can parametrize by $\theta_i \in \RR$.  The condition that $\gamma_1(\theta_1) \cap \gamma_2(\theta_2) \cap \gamma_3(\theta_3) \not= \emptyset$ is a codimension 1 condition on $\theta_1, \theta_2, \theta_3$ -- it corresponds to a 2-dimensional submanifold $Z_{line} \subset \RR^3$.  Applying the same approach to unit circles or unit parabolas, we get two more submanifolds $Z_{unit circle}$ and $Z_{unit parabola}$.  For lines and unit parabolas, the submanifold $Z$ encodes an abelian group structure, but for unit circles, Elekes-Szabo proved that it does not.

 If $|A| = |B| = |C| = n$, then we have $3n$ real variables, and so a configuration is overdetermined if $|Z \cap (A \times B \times C)| > 3n$.  The value of $\eta$ in Theorem \ref{thmes} was improved in ... and \cite{SZ}, and the best current value is $\eta = 2/7$.  So Theorem \ref{thmes} gives a detailed structural description of the scenario when $|Z \cap (A \times B \times C)| \gg n^{1 + \frac{5}{7}}$.  
Makhul,  Roche-Newton, Warren, and de Zeeuw \cite{MRWZ} gave an example where $Z$ does not have abelian group structure and yet $|Z \cap (A \times B \times C)| \sim n^{3/2}$.  It is conceivable that Theorem \ref{thmes} holds with $\eta = 1/2$.  If $|Z \cap (A \times B \times C)| \sim n^\alpha$ for $1 <\alpha \le 3/2$, the problem is still highly overdetermined and so intuitively such configurations should all have some type of special structure, but there is not currently any conjectural classification of the special structures that may occur.

 \subsection{Low degree algebraic structure}
 
 The polynomial method, developed in \cite{D}, \cite{GK1}, \cite{GK2}, and other papers, is a tool for finding low degree polynomial structure in incidence geometry problems.  Here is an example.  Suppose that $\frak L$ is a set of $n$ lines in $\RR^3$.  Let $P_2(\frak L)$ be the set of points in $\RR^3$ that lie in at least 2 lines in $\RR^3$.  Since two lines intersect in at most one point, $|P_2(\frak L)| \le {n \choose 2}$.  It can happen that $|P_2(\frak L)| = {n \choose 2}$ if all the lines of $\frak L$ lie in a plane.  More generally, it can happen that $|P_2(\frak L)| \sim n^2$ if all the lines of $\frak L$ lie in a degree 2 algebraic surface, such as $z = xy$.  The surface $z= xy$ contains two families of lines: the lines $\ell_{1,a}$ parametrize by $t \mapsto (a, t, at)$ and the lines $\ell_{2,b}$ parametrized by $t \mapsto (t, b, bt)$.  For any $a,b$, $\ell_{1,a} \cap \ell_{2,b} \not= \emptyset$.  Choosing $n/2$ lines of the first family and $n/2$ lines of the second family, we get $|P_2(\frak L)| = n^2/4$.  In \cite{GK2}, Nets Katz and I proved that if $|P_2(\frak L)| \gg n^{3/2}$, then the lines of $\frak L$ must cluster into planes or degree 2 surfaces.
 
 \begin{theorem} \label{thmgk} (\cite{GK2}) If $\frak L$ is a set of $n$ lines in $\RR^3$  and $|P_2(\frak L)| \gg n^{3/2}$, then there is a plane or degree 2 surface that contains $\gtrsim \frac{ |P_2(\frak L)| }{n}$ lines of $\frak L$.
  \end{theorem}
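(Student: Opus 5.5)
The plan is the polynomial method of \cite{GK2}: find a low degree polynomial vanishing on the lines, then use the ruled-surface structure (flecnodes) of its zero set. Write $P = P_2(\frak L)$ and assume $|P| \ge K n^{3/2}$ with $K$ a large constant. Most of the argument is a dichotomy. Either the points of $P$ are spread over many lines, in which case a polynomial of degree about $n^{1/2}$ vanishing on all of $\frak L$ forces many lines into a ruled or planar component. Or the points are concentrated, and I pass to a sub-configuration and induct. I expect the inductive bookkeeping to be routine. The real content is the algebraic geometry step.

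\textbf{Step 1: pruning.} Repeatedly delete lines containing fewer than $|P|/(10n)$ points of $P$. The deleted lines account for at most $|P|/10$ points in total, counted with incidences. So a subfamily $\frak L'$ survives in which every line contains $\gtrsim |P|/n \ge K n^{1/2}$ points of $P \cap \bigcup \frak L'$.

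\textbf{Step 2: degree reduction.} By parameter counting, choose a nonzero polynomial $Q$ of degree $D \sim n^{1/2}$ vanishing on a set of $\sim D^2 \cdot$(few) points on each line. For instance, pick $\sim n^{1/2}$ points on each of the lines. Each line of $\frak L'$ then meets $Z(Q)$ in more than $D$ points, so $Q$ vanishes on every line of $\frak L'$. Since the lines are $K n^{1/2}$-rich in intersection points, this contains $\gtrsim |\frak L'|$ lines inside $Z(Q)$, with most points of $P$ lying in the singular or flecnodal locus.

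\textbf{Step 3: the flecnodal polynomial (main obstacle).} Use Salmon--Cayley--Monge: there is a flecnodal polynomial $F_Q$ of degree at most $11D$ that vanishes on every line in $Z(Q)$. If an irreducible component $Z(Q_j)$ contains more than $11\deg(Q_j)\cdot\deg(Q_j)$ lines, then $Q_j$ divides $F_{Q_j}$, so $Z(Q_j)$ is ruled. I must then show that a non-planar, non-regulus ruled irreducible surface has few intersection points among its lines. Each line of it meets at most $O(\deg)$ other lines of the surface, apart from those through at most two exceptional points or along an exceptional line. This is the step I expect to be hardest. It needs the structure theorem for doubly ruled surfaces (only planes and reguli are doubly ruled) and the count of points with bounded multiplicity.

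\textbf{Step 4: summing over components.} Take the components $Q_j$, with $\sum_j \deg Q_j \le D \lesssim n^{1/2}$. Intersection points arise in three ways: between lines lying in different components, between lines in singly ruled components, and between lines in the same plane or regulus. The first two kinds number at most $O(nD) = O(n^{3/2})$. This uses Bezout and the fact that a line not contained in a surface meets it in at most $\deg$ points. Comparing with $|P| \gg n^{3/2}$, the planes and reguli carry $\gtrsim |P|$ intersection points. A plane or regulus containing $L_j$ lines contributes at most $L_j^2$ points, and $\sum_j L_j \le n$. Hence $|P| \lesssim \sum_j L_j^2 \le n \max_j L_j$, which gives $\max_j L_j \gtrsim |P|/n$ as required.
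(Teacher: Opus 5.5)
\textbf{There is a genuine gap.} It sits in Steps 2 and 4, not in the ruled-surface step you flagged. (The survey only cites \cite{GK2}, so the comparison here is with the Guth--Katz argument, whose outline you otherwise follow.)

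Your case list in Step 4 covers intersections between different components, inside singly ruled components, and inside planes and reguli. It omits intersections between two lines lying in the same \emph{non-ruled} irreducible component. The flecnode (Cayley--Salmon) argument only shows that a non-ruled $Z(Q_j)$ contains at most $11d_j^2$ lines. So at most $11D^2$ lines of $\frak L'$ lie in non-ruled components, but nothing is said about how often those lines meet each other. Your $D \sim n^{1/2}$ comes from plain parameter counting, which needs $D^2 \gtrsim 6n$ and never uses the richness from Step 1. With that $D$, the bound $11D^2$ exceeds $n$. So nothing prevents every line of $\frak L'$ from lying in non-ruled components (e.g.\ $Q$ irreducible and non-ruled), and then Steps 3--4 give no information. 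In particular, the claim that a polynomial of degree about $n^{1/2}$ ``forces many lines into a ruled or planar component'' is unjustified.

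The missing ingredients are exactly the two things the Guth--Katz proof adds.
\begin{enumerate}
\item \textbf{Degree reduction using richness.} After pruning, each line of $\frak L'$ meets at least $Kn^{1/2}/10$ other lines at distinct points. So you may take $Q$ vanishing only on a random subfamily in which each line is kept with probability $\sim K^{-2}$. This gives $D \lesssim n^{1/2}/K$. With high probability every line of $\frak L'$ meets more than $D$ sampled lines, hence lies in $Z(Q)$. The non-ruled components then carry at most $11D^2 \lesssim n/K^2$ lines.
\item \textbf{Induction on $n$ for that small family.} You need a formulation inherited by subfamilies, e.g.\ $|P_2(\frak L)| \le C_*(n^{3/2} + Bn)$, where $B$ is the largest number of lines of $\frak L$ in a plane or regulus. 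This is equivalent to the stated theorem, and subfamilies have no larger $B$. Assuming $|P| \ge C_*(n^{3/2}+Bn)$ for contradiction, the non-ruled lines contribute at most $C_*\bigl((n/K^2)^{3/2} + Bn/K^2\bigr)$, a small fraction of $|P|$.
\end{enumerate}
With these in place, the rest of your bookkeeping closes the argument as you describe: at most $nD \lesssim n^{3/2}/K$ cross-component points, $O(L_j d_j + 1)$ points in each non-planar, non-regulus ruled component via the exceptional point/line analysis, and $\sum_j L_j^2 \le n \max_j L_j$ for planes and reguli.
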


The polynomial method has an application to a problem about distances called the distinct distances problem.  In the paper \cite{Erd}, Erdos posed several  problems,  including the distinct distances problem and the unit distance problem.  In \cite{GK2}, Nets Katz and I solved the distinct distances problem in dimension 2.  The solution combines a framework introduced by Elekes-Sharir with incidence estimates for lines in $\RR^3$ such as Theorem \ref{thmgk}.
 
Theorem \ref{thmgk} gives information about the special structures that must appear when $|P_2(\frak L)|$ is large.  A line in $\RR^3$ is determined by 4 real parameters, and so if $\frak L$ is a set of $n$ lines, then there are $4n$ real parameters.  The condition that two lines in $\RR^3$ intersect is a codimension 1 condition.  Therefore, if $|P_2(\frak L)| > 4n$, then the configuration is overdetermined.  If $|P_2(\frak L)| \gg n^{3/2}$, then Theorem \ref{thmgk} leads to a full classification of possible configurations.  All such configurations are based on lines clustering into planes or degree 2 surfaces.  In the regime, $n \ll |P_2(\frak L)| \lesssim n^{3/2}$, the problem is still highly overdetermined but we are not able to prove any structural information about $\frak L$.

There are many open problems in this area.  One significant open problem is to generalize Theorem \ref{thmgk} to higher dimensions.  This problem connects to subtle questions in algebraic geometry, and it is a place where I think the field could benefit from collaboration between people in combinatorics and people in algebraic geometry.  See \cite{Gu} for more background about this problem and related questions in algebraic geometry (and feel free to email me about it).

\subsection{Arithmetic structure?}

Many people have wondered about a classification theorem for near-optimal configurations in the point-line incidence problem.  If $|P| = |L| = n$ and $I(P,L) > 6n$, then the configuration is overdetermined.  We have a few examples with $I(P,L) = n^\alpha$ with $\alpha > 1$, but they are all in some sense based on studying integer points on varieties (perhaps over a number field).  On the other hand, even assuming $I(P,L) \sim n^{4/3}$, no one has managed to prove that the configuration has any kind of arithmetic structure.

More broadly, arithmetic structure appears in the best known examples for a wide variety of problems in incidence geometry.  
These examples based on number fields and arithmetic seem to hint at an underlying connection between incidence geometry and number theory.  I think such a connection would be interesting, because the questions in incidence geometry all take place over the real numbers and don't mention integers or number fields.  On the other hand, it may be that there are other really different examples in incidence geometry which are not based on arithmetic.   Understanding whether there is such an underlying connection is a longstanding philosophical question in the field.  If there is such a connection, it would be interesting to learn where the connection comes from and how it works.  And if there are non-arithmetic examples, it would give a new direction of examples and phenomena to explore.

\end{document}